\definecolor{darkgreen}{rgb}{0.0, 0.7, 0.0}
\definecolor{cyan}{cmyk}{1,0,0,0}
\newcommand{\cdg}{\color{darkgreen}}
\newcommand{\bdg}{\begin{dg}}
\newcommand{\edg}{\end{dg}}
\newtheorem{tm}{Theorem}[section]
\newtheorem{lm}[tm]{Lemma}
\newtheorem{lemma}[tm]{Lemma}
\newtheorem{rmk}[tm]{Remark}
\newtheorem{??}[tm]{Question}
\newtheorem{definition}[tm]{Definition}
\newtheorem{ass}[tm]{Assumption}
\newcommand{\ben}{\begin{enumerate}}
\newcommand{\een}{\end{enumerate}}
\newcommand{\bit}{\begin{itemize}}
\newcommand{\eit}{\end{itemize}}
\newcommand{\beq}{\begin{equation}}
\newcommand{\eeq}{\end{equation}}
\newcommand{\la}{\label}
\newcommand{\n}{\noindent}
\newcommand\ci{\cite}
\font\tenmsb=msbm10
\font\sevenmsb=msbm7
\font\fivemsb=msbm5
\def\Bbb#1{{\fam\msbfam #1}}
\font\teneufm=eufm10
\font\seveneufm=eufm7
\font\fiveeufm=eufm5
\newcommand{\lorw}{\longrightarrow}
\newcommand\rat{{\Bbb Q}}
\newcommand\oql{\overline{\Bbb Q}_\ell}
\newcommand\comp{{\Bbb C}}
\newcommand\pn[1]{{\Bbb P}^{#1}}
\newcommand\e{\epsilon}
\newcommand{\w}[1]{\widetilde{#1}}
\newcommand{\ov}[1]{\overline{#1}}
\newcommand{\ms}[1]{\mathscr{#1}}
\newcommand{\field}{k} 
\newcommand{\Gm}{\mathbb G_m}
\newcommand{\FC}{C^{(1)}} 
\newcommand{\A}{A} 
\newcommand{\ra}{\rightarrow} 
\newcommand{\xra}{\xrightarrow}
\newcommand{\ah}{\alpha}
\newcommand{\gam}{\gamma}
\newcommand{\Z}{\mathbb{Z}}
\newcommand{\sig}{\sigma}
\newcommand{\lam}{\lambda}
\newcommand{\lb}{\langle}
\newcommand{\rb}{\rangle}
\newcommand{\hra}{\hookrightarrow}
\newcommand{\Proj}{\mathbb{P}}
\newcommand{\mA}{\mathbb{A}}
\newcommand{\Gg}{\mathbb{G}}
\newcommand{\Bbase}{B}
\newcommand{\GB}{\Gg_{m,\Bbase}}
\newcommand{\oxb}{\omega_{X^{(\Bbase)}/\Bbase}}
\newcommand{\Spec}{\text{Spec}}
\title[Projective Completion of Moduli of $t$-Connections]{Projective completion of   moduli of $t$-connections on curves in positive and mixed characteristic}
\author{
Mark Andrea A.  de Cataldo and Siqing Zhang
}
\address{Mark Andrea A.  de Cataldo, Stony Brook University, mark.decataldo@stonybrook.edu}
\address{Siqing Zhang, Stony Brook University, siqing.zhang@stonybrook.edu}
\begin{document}

\maketitle

\begin{abstract} 
We generalize a compactification technique due to C. Simpson in the context of $\Gm$-actions over the ground  field of complex numbers, to the case of a universally Japanese base ring. 
We complement this generalized compactification technique so that it
can sometimes yield projectivity results for these compactifications. We apply these projectivity results to the Hodge, de Rham and Dolbeault moduli spaces
for curves, with special regards to ground fields of positive characteristic. 
\end{abstract}

\tableofcontents

\section{Introduction}\la{intro}

C. Simpson has introduced a compactification technique (\ci[\S11]{si naf})
which he then applied to compactify the moduli space of flat connections over a curve defined  over the complex numbers.
This paper has grown from the need  in \ci{de-szh naht} to    generalize 
this compactification technique so that it leads to compactifications
of the moduli spaces that appear in the Non Abelian Hodge Theory of a curve defined over an algebraically closed field of arbitrary characteristic, or over  a discrete valuation ring, 
possibly of mixed characteristic. 
For more details on these moduli spaces $M_{Hod}$ of $t$-connections, $M_{dR}$ of connections, and $M_{Dol}$
of Higgs bundles,  see \S\ref{rt51}.


 Let us discuss a little Simpson's  compactification technique leading to the compactification of the moduli space
 $M_{dR}$
 of connections on a curve $C$ over the complex numbers.  First, he constructs the moduli space of $t$-connections as a $\Gm$-equivariant morphism $\tau: M_{Hod} \to \mathbb A^1$, where a $t$-connection  on $C$ is sent to the scalar value $t$.
 The action is: multiply a $t$-connection by a non-zero scalar.
 For $t=0$, we have the Dolbeault moduli space $M_{Dol}$ of Higgs bundles, and for $t=1$ we have the de Rham moduli space of connections. There is the Hitchin morphism $h_{Dol}: M_{Dol}\to A$ ($A$ a suitable affine space parametrizing
 spectral curves for $C$ in the cotangent bundle of $C$). The fiber $N_{Dol}$ over the point $o\in A$ corresponding to the spectral curve $rC$ --rank times the zero section-- is   compact and is also the set of points in $M_{Hod}$
 that admit infinity limits for the $\Gm$-action.  Simspson sets $\ov{M_{dR}}:= (M_{Hod} \setminus N_{Dol})/\Gm;$ this way the boundary $\ov{M_{dR}} \setminus M_{dR}= (M_{Dol}\setminus N_{Dol})/\Gm.$ This definition is simple-minded. On the other hand, the proof that the quotient exists as a separated proper
 scheme over $\comp$ is quite clever and intricate (it does not use the methods from D. Mumford's Geometric Invariant Theory).  Simpson proves a more general result (\ci[\S11]{si naf}), where one takes the quotient by $\Gm$
 of  a suitable  $\Gm$-variety $U/S$ over a complex variety $S$ endowed with the trivial $\Gm$-action.
This is what we mean by Simpson's 
 compactifiction techinque. 
 The application to the compactification of $M_{dR}$  is a special case;  see  diagram (\ref{act hh}) in the proof of Theorem \ref{cpt tm hod}.
 
 The first set of results of this paper are stated in 
 \S\ref{cpt proj rez} and  are proved in  the lengthy and technical  \S\ref{proof ugm proper}.
 We generalize Simpson's compactification technique in Theorems \ref{ugm proper} and \ref{ugm proper 2}.
  In short,  the set-up $U/S/\comp$ above, is replaced  by one of the form $U/S/B/J$, where $B$ is a base scheme
  over a universally Japanese ring $J$, and the multiplicative group  acting is ${\mathbb G}_{m,B}.$ 
  This level of generality seems to be the natural one in view
 of A. Langer's results yielding, as special cases  of  his \ci[Tm. 1.1]{la-2014},  the moduli spaces we work with
 for families of curves over a base defined over such a ring.  
  This covers the case of discrete valuation rings, which is of interest in \ci{de-szh naht}. 
  We complement these results with the compactification and projectivity criteria in Theorem \ref{proj tm};
  here one works with an equivariant morphism $U/S \to U'/S$, and this is useful in our applications,
  as the moduli spaces we work with do carry such morphisms,  such as the Hitchin morphism $h_{Dol}$ seen above, and we want to compactifiy domain and target,
  while  keeping track of the morphism.

The second set of results are compactification results for the moduli spaces $M_{Hod}, M_{dR}$ and $M_{Dol}$.
Recall (\S\ref{rt51}) that we have natural morphisms
exiting these moduli spaces: the  proper Hitchin morphism $h_{Dol}$;
the structural morphism $\tau_{Hod}: M_{Hod} \to \mathbb A^1$.  In positive characteristic, we also have: 
the Hodge-Hitchin morphism $h_{Hod} : M_{Hod} \to A'\times \mathbb A^1$ (here $A'$ is a suitable affine space
parameterizing the spectral curves for the Frobenius twist of the curve);  the de Rham-Hitchin morphism
$h_{dR}: M_{dR} \to A'$.
  Our compactification results for these moduli spaces and the associated morphisms
  are stated in \S\ref{rt51} and proved in \S\ref{ss main rz}, as an application of 
Theorem \ref{proj tm}:
Theorem \ref{comphodj} ($M_{Hod}$); Theorem \ref{cpt tm hod} (positive characteristic, $M_{Hod}$ and $h_{Hod}$); 
Theorem \ref{Jcpt tm dr} ($M_{dR}$); Theorem \ref{cpt tm dr} (positive characteristic, $M_{dR}$ and $h_{dR}$);
Theorem \ref{Jcpt tm dol} ($M_{Dol}$); Theorem \ref{cpt tm dr} (positive characteristic, $M_{Dol}$ in relation
to $M_{Hod}$).

In fact, we also prove projectivity  results concerning  the aforelisted natural morphisms
exiting these moduli spaces. 
We prove, using the known fact that the Hitchin morphism $h_{Dol}$ is proper,    that the  morphisms 
$\ov{\tau_{Hod}}: \ov{M_{Hod}} \to \mathbb A^1$, $h_{Hod}$ and $h_{dR}$ are proper, in fact projective.

 The properness of $h_{dR}$ has been  proved by M. Groechenig \ci{gr-2016}, who deduces it from the properness of the Hitchin morphism. 
The Hodge-Hitchin morphism $h_{Hod}$ 
 has been introduced 
by Y. Laszlo and C. Pauly, who proved  (\ci[Pr. 5.1]{la-pa}) that it is proper when restricted over  $o_{A'} \times \mathbb A^1,$  where $o_{A'}$ is the ``origin" of $A'$ ($t$-connections with nilpotent $p$-curvature).
A. Langer 's \ci[statement at the top of p. 531 and Th. 5.1]{la-2014} implies  that $h_{Hod}$ is proper;
see Remark \ref{rmk ala}.
In either case, 
one applies a variant of the Langton technique to a related Hitchin morphism, and deduces from it the desired conclusion.
The proof we offer is via the compactification theorems we prove, but also relies
on Langer's  Langton-type result \ci[Th. 5.1]{la-2014}.
 
 The purpose of the Appendix \S\ref{appdx} is stated in   \S\ref{intro appdx}: in short, one wants to extend, under favorable circumstances,  the techniques and results in \ci{de-2021} concerning specialization morphisms
 in cohomology, from a situation over the complex numbers,  to the one over a  discrete valuation base ring. 
 This entails making sure that:
 we have suitable compactifications (this is achieved by the compactifications in \S\ref{rt51}); we have
 the correct formalism of perverse sheaves for schemes over a discrete valuation ring (this is confirmed in \S\ref{recdvr});
 we  carefully revisit \ci[\S4]{de-2021}  and make sure that some potential issues due to positive or mixed characteristic are ironed out, at least  under favorable circumstances (this is done in  the technical  \S\ref{a cpt sp}). 

 The results of this Appendix  \S\ref{appdx},  which relies heavily on the results in \S\ref{rt51}, 
are used in \ci{de-szh naht}.

{\bf Acknowledgments.}  We thank the referee for the very good suggestions.
We thank  Dan Abramovich,  Barghav Bhatt, H\'el\`ene Esnault, Camilla Felisetti,  
Michael Groechenig,  Jochen Heinloth, Annette Huber, Luc Illusie, Adrian Langer, Yifeng Liu, Davesh Maulik,
Mirko Mauri, Sophie Morel, David Rydh,   Junliang Shen,  Ravi Vakil, Angelo Vistoli,  and Weizhe Zheng.
for useful conversations.
M.A. de Cataldo is partially supported by NSF grants DMS 1901975 and by a Simons Fellowship in Mathematics.
S. Zhang is partially supported by NSF grant DMS  1901975.

\section{Statement of the main results}\la{rzze}

\subsection{Compactification and projectivity results}\la{cpt proj rez}$\;$

In order to prove the compactification Theorems in \S\ref{rt51} concerning Hodge, de Rham and Dolbeault moduli spaces associated with
curves over some suitable base schemes, we first need to  prove
the Projectivity Theorem III  \ref{proj tm}. In turn, to prove this latter result, we need to prove the Compactification Theorem
II \ref{ugm proper 2}, which is a direct consequence of the Compactification Theorem I \ref{ugm proper}, the proof of which takes the bulk of this paper.

The Compactifiction Theorems I \ref{ugm proper} and II  \ref {ugm proper 2} together generalizes Simpson's Compactification  
technique \ci[Thm.\;11.1,\;11.2]{si naf}, which is stated and proved by C. Simposn over the field of complex numbers, to the case over a base scheme as in Assumption \ref{assumption p}.

The Projectivity Theorem III  \ref{proj tm} is the arbitrary characteristic counterpart to 
 \ci[Prop. 3.2.2]{de cpt}. In fact, by using the notation of Theorem \ref{proj tm}, this same theorem
 replaces the assumption that $Z\to Z'$ is proper, with the weaker assumption that $U$ is the pre-image of $U'$.
 This improvement, coupled with auxiliary properness results, affords proofs  of properness and of projectivity of certain morphisms and objects  arising in Non Abelian Hodge Theory; see \S\ref{rt51} and their proofs in \S\ref{ss main rz}.

The Compactification Theorem I \ref{ugm proper} can also be viewed as a partial  generalization (replace the ground field with the base variety $S$)
 of the main theorem in \cite[p.11]{bi-sw} to the relative case. For a comparison between Theorem \ref{ugm proper} and the main theorem of \cite[p.11]{bi-sw}), see Remark \ref{relation with bi-sw}.

 Let us  introduce  the setup for the Compactification Theorem I \ref{ugm proper}. 
 This setup is similar to the one in \cite[p.\;44]{si naf}; the main difference
is that  we work over a base scheme $B$  over a universally Japanese ring $J$ \cite[\href{https://stacks.math.columbia.edu/tag/032E}{032E}]{stacks}), while Simpson works over the complex numbers (i.e. $J=\comp$).  (Added in revision: the paper \ci{La-2021} allows to merely assume $B$ to be Noetherian and drop $J$.)

 \begin{ass}[\bf Setup for the Compactification Theorem I \ref{ugm proper}]\la{assumption p}$\;$

\n
The following assumptions  concerning schemes $X/S/B/J$ remain in vigour up to and including Theorem \ref{ugm proper}. 
Let $J$ be a universally Japanese ring.
Let $B$ and $S$ be noetherian schemes.
Let $S\ra B\ra J$ be separated morphims of finite type.
Assume that $S$ admits an invertible sheaf that is ample, and an invertible sheaf that is ample relative to $B$;
this ensures that there is a $\Bbase$-morphism that is a locally closed embedding of $S$ into $\Proj^N_{\Bbase}$ for some $N>0$.
Let $X\ra S$ be a projective morphism.
Let $\GB:=\Gm\times_{\Z}B$.
Let $\mu: \GB \times_{B} X\ra X$ be a $\GB$-action on $X$ covering the trivial $\GB$-action on $S$. 
Assume that $X$ admits a $\GB$-linearized ample line bundle.
 \end{ass}

Our next goal is to state Theorem \ref{ugm proper} and, to this end, we need some preparation.

{\bf Limit points, fixed points.}
Let us recall the definition of limits of a point in $X$ under the $\GB$-action $\mu$.
Let $T$ be a $B$-scheme.
Let $x\in X(T)$. 
Let $\mu_x$ be the orbit morphism defined by the following compositum:
\begin{equation}
    \mu_x: \Gg_{m,T}:=\GB\times_B T\xra{id\times x}\GB\times_{B} X\xra{\mu} X.
\end{equation}

If $\mu_x$ extends to a morphism $\w{\mu_x}:\mA^1_T\ra X$, then this extension is unique. In this case, we say that $\lim_{t\ra 0} t\cdot x$ exists and we set it to be the restriction of 
$\w{\mu_x}$ to $0_T\subset \mA^1_T$. Clearly,  $\lim_{t\ra 0} t\cdot x$ is an $T$-point of $X$.
Similarly, if $\mu_x$ extends to a morphism $\w{\mu_x}': \Proj^1_T\setminus 0_T\ra X_T$, then this extension is unique, and we set $\lim_{t\ra \infty} t\cdot x$ to be the restriction of $\w{\mu_x}'$ to $\infty_T\subset \Proj^1_T\setminus 0_T$, which is also an $T$-point of $X$.
By \cite[XII Cor.\;9.8]{sga3},
there exists the closed subscheme $V\subset X$ of fixed points for the $\GB$ action.

{\bf A partial order.}
Let us  introduce  a partial order on the Zariski points of $V$ as in \cite[p.\;44]{si naf} via the following definition.
The weight argument as in \cite[p.\;44]{si naf}
shows that the upcoming relation  $\le$ indeed defines a partial order on the Zariski points of $V$.

\begin{definition}[\bf The Partial Order $\le$ on the Zariski points of $V$]
\label{def order}
Let $u$ and $v$ be two Zariski points of $V$. Define a relation $\le$ as follows:   $u\le v$ if there exists a finite sequence of Zariski points $x_1,...,x_m$ of $X$ such that $\lim_{t\ra 0} t\cdot x_1=u$, $\forall 1\le l \le m-1$, $\lim_{t\ra \infty} t\cdot x_l=\lim_{t\ra 0} x_{l+1}$, and  $\lim_{t\ra \infty} t\cdot x_m =v$. 

If $u\le v$, then we say that  $u$ is more zero than $v$, and $v$ is more infinity then $u$.
\end{definition}

\begin{definition}[\bf Partitions $V=V^+\cup V^-$]\label{define V+}
We consider partitions of $V$  with the following properties.
Let $V_+$ and $V_-$ be two disjoint closed and open subschemes of $V$ with the property that $V=V_+\cup V_-$.
In addition, we require the following: if $u$ is more zero than a point in $V_+$, then $u\in V_+$; if $v$ is more infinity than a point in $V_-$, then $v\in V_-$.
\end{definition}

{\bf Concentrators.}
Let $Z$ be any $\Gm$-stable closed subscheme of $X$.
Let the $0$-concentrator functor $\Phi_0$ be the subfunctor of $X$ 
such that for any $B$-scheme $T$, a $T$-point $x$ of $X$ is in $\Phi_0(T)$ 
if and only if $\lim_{t\ra 0} t\cdot x$ exists and lies in $Z(T)$.
By \cite[\S4.5]{he-1981}, we see that $\Phi_0$ is represented by a scheme $X_0(Z)$ with a morphism $X_0(Z)\ra X$ that is locally over $X_0(Z)$ a locally closed immersion (note that $X_0(Z)\ra X$ may not be a locally closed immersion, see \cite[\S4.6]{he-1981}). 
Similarly, we can define the $\infty$-concentrator $\Phi_{\infty}$ and the scheme  morphism $X_{\infty}(Z)\to X$.

\begin{definition}[\bf Set Theoretic Partition $X=Y_+\cup Y_-\cup U$]
\label{define Y and U}
We fix a partition $V=V^+ \cup V^-$ of the fixed point set  as in Definition \ref{define V+}.
We define $Y_+$ to be the set theoretic image of $X_{\infty}(V_+)\ra X$, and define $Y_-$ to be the set theoretic image of $X_0(V_-)\ra X$.
We define the set $U:=X\setminus (Y_+\cup Y_-)$.
\end{definition}

\begin{rmk}\la{ta disj}
Let us show that the sets $Y_\pm$ in Definition \ref{define Y and U} are disjoint.
If there were  $x\in Y_+\cap Y_-$, then $x$ would be  more zero than a point $u\in V_+$, and more infinity than a point $v\in V_-$.
We would then have that $u$ is more zero than $v\in V_-$. By Definition \ref{define V+}, we would have that $u\in V_+\cap V_-$, contradicting that $V_+\cap V_-=\emptyset$. 
\end{rmk}

Recall that a uniform (resp. universal) geometric quotient $A\to B$ is a geometric quotient whose formation commutes with flat (resp. arbitrary) base change $B'\to B.$

\begin{tm}[\bf Compactification Theorem I]
\label{ugm proper}
Assumption \ref{assumption p} on $X/S/B/J$ are  in vigour.
Fix a partition of the fixed point set $V=V^+ \cup V^-$ as in Definition \ref{define V+} and let
$X=Y_+\cup Y_-\cup U$ be the corresponding   set theoretic partition  as in Definition \ref{define Y and U}. We have that 
\ben
\item
Both $Y_+$ and $Y_-$ are closed inside $X$.
\item
The uniform geometric quotient $U\to U/\GB$ exists, with $U/\GB$  an $S$-scheme.
\item
The morphism $U/\GB\ra S$ is universally closed.
\item
The morphism $U/\GB\ra S$ is separated, thus, in view of (3) above, proper.
\een
\end{tm}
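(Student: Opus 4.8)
The plan is to establish separatedness of $U/\GB \to S$ through the valuative criterion; together with the universal closedness from part (3) and the finite type of $U/\GB \to S$ (inherited from the projective, hence finite type, morphism $X\to S$), this yields properness. So I would take a discrete valuation ring $R$ with fraction field $K$, residue field $k$, and closed point $s$, equipped with a morphism $\Spec R\ra S$, and two lifts $a,b:\Spec R\ra U/\GB$ over $S$ with $a_K=b_K$, and show $a=b$. Since $U\ra U/\GB$ is a surjective geometric quotient (part (2)), after an extension $R\subset R'$ of discrete valuation rings --- permitted in the valuative criterion for a finite type morphism --- one may lift $a,b$ to $R'$-points $x,x'\in U(R')$; as the quotient is geometric, the equality $a_K=b_K$ forces $x_{K'}$ and $x'_{K'}$ into a single $\GB$-orbit, so $x_{K'}=\lambda\cdot x'_{K'}$ for some $\lambda\in\GB(K')=(K')^{\times}$. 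Renaming $R'$ to $R$, set $n:=v(\lambda)$. It suffices to prove $n=0$: then $\lambda\in R^{\times}$, the section $\lambda\cdot x'$ is defined over $R$ and agrees with $x$ over $K$, so $x=\lambda\cdot x'$ by separatedness of $X\ra S$, whence $x,x'$ share a $\GB$-orbit over $R$ and $a=b$.

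Suppose for contradiction $n\neq 0$; by the symmetry exchanging $x,x'$ and replacing $\lambda$ by $\lambda^{-1}$, I may assume $n>0$, so $\lambda$ specializes to $0$. I would then consider the orbit morphism $\mu_{x'}:\GB\times_B\Spec R\ra X$. Because $X\ra S$ is projective, hence proper, this orbit extends over the codimension-one points of $\mA^1_R=\Spec R[t]$; in particular the generic-fibre and special-fibre orbits each acquire a limit at $t=0$, namely $\lim_{t\ra 0} t\cdot x'_K$ and $\lim_{t\ra 0} t\cdot x'_s$, both of which are $\GB$-fixed and so lie in $V$. The idea is to evaluate the orbit of $x'$ ``at $t=\lambda$'': over $K$ it returns $\lambda\cdot x'_K=x_K$, while along the special fibre, since $\lambda$ specializes to $0$, it should return $\lim_{t\ra 0} t\cdot x'_s\in V$. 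Making this precise produces an $R$-point $y\in X(R)$ with $y_K=x_K$ and $y_s=\lim_{t\ra 0} t\cdot x'_s\in V$. As $X\ra S$ is separated and $y_K=x_K$, I conclude $y=x$, and therefore $x_s=y_s\in V$.

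This is the sought contradiction, because the fixed locus $V$ is disjoint from $U$. Indeed, every fixed point is its own $0$-limit and $\infty$-limit, so a point of $V_+$ lies in $Y_+$ and a point of $V_-$ lies in $Y_-$; hence $V\subset Y_+\cup Y_-$ and $V\cap U=\emptyset$ (with $U$ open in $X$ by part (1) and Definition \ref{define Y and U}), contradicting $x_s\in U$. Therefore $n=0$, which completes separatedness and, with part (3) and finite type, properness.

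I expect the main obstacle to be the step making precise the evaluation ``at $t=\lambda$'' on the special fibre. With $\pi$ a uniformizer of $R$, the orbit $\mu_{x'}$ is only guaranteed to extend to a morphism on $\mA^1_R$ away from the codimension-two origin $O=(t,\pi)$, and the section $t=\lambda$ passes exactly through $O$ on the special fibre, so one cannot simply substitute. I would resolve this either by eliminating the indeterminacy of $\mu_{x'}$ through a finite sequence of blow-ups of the regular surface $\mA^1_R$ and checking that the relevant exceptional point maps to $\lim_{t\ra 0} t\cdot x'_s\in V$, or, more cleanly, by invoking that the concentrator schemes $X_0(Z)$ represent functors defined through these limits and that their formation commutes with base change, so that $\lim_{t\ra 0} t\cdot x'_K$ specializes to $\lim_{t\ra 0} t\cdot x'_s$. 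Either route reduces the matter to the clean fact that passing to the $0$-limit lands in the fixed locus $V$, which is precisely what places the resulting point outside $U$.
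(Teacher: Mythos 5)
Your proposal addresses only part (4) of the theorem, taking (1)--(3) as inputs; in the paper those three parts occupy most of \S\ref{proof ugm proper} and carry the bulk of the work, so this is already a substantial omission. More importantly, the separatedness argument itself has a genuine gap, located exactly at the step you flag as the main obstacle. Having reduced to showing that $x_{K'}=\lambda\cdot x'_{K'}$ with $x,x'\in U(R')$ forces $v(\lambda)=0$, you propose to refute $n=v(\lambda)>0$ by producing $y\in X(R')$ with $y_{K'}=\lambda\cdot x'_{K'}$ and $y_s=\lim_{t\ra 0}t\cdot x'_s\in V$. That identification of $y_s$ is false in general. The orbit map of $x'$ extends to a morphism of $\mA^1_{R'}$ only away from finitely many closed points of the special fibre, and $(t,\pi)=(0,0)$ is typically one of them; after eliminating the indeterminacy, the strict transform of the section $\{t=\lambda\}$ meets the exceptional locus over $(0,0)$ at a point determined by $n$ and by the weights of the successive blow-ups, and the image of that point is in general neither $\lim_{t\ra 0}t\cdot x'_s$ nor even a fixed point. (Already for $\mathbb{P}^2$ with the action $t\cdot[x:y:z]=[x:ty:tz]$ and $x'=[\pi:1:1]$, $\lambda=u\pi$: one blow-up resolves the map to $[1:s:s]$ with $t=s\pi$, the exceptional line maps onto the closure of a one-dimensional orbit, and the section picks out the non-fixed point $[1:u:u]$, not the limit $[0:1:1]$ of $x'_s$.) Neither of your proposed repairs closes this: the blow-up computation, once carried out, refutes rather than confirms the identification, and base change of the concentrator schemes governs the specialization of the $0$-limit, which is a different point from the specialization of $\lambda\cdot x'_{K'}$.

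What is true --- and would suffice for your contradiction, since $Y_+\cap U=\emptyset$ --- is that this point lands in $Y_+$. But proving that is precisely the content of the paper's analysis of $Z(r)$: the reduced closed fibre of the resolution is a chain of $\Gg_{m,\kappa}$-equivariant $\Proj^1_\kappa$'s linearly ordered by the relation $\le$ of Definition \ref{def order}; the part of the chain lying over $0_\kappa$ sits below the strict transform of $E_0$ in that order; and every component of that part maps into $Y_+$ because its two fixed endpoints map into $V_+$ (using that $x'_{K'}$ and $x'_s$ lie in $U$, the closedness of $V_+$, and its downward closure under ``more zero''). In other words, the step you hoped to dispatch with ``the limit lands in $V$'' requires Lemmas \ref{lemma int Z}--\ref{main} in full, which is essentially the entire technical apparatus of the paper's proof. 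Two further points need justification: lifting $a,b$ to $R'$-points of $U$ is not automatic, since $U\ra U/\GB$ is not proper --- the paper obtains it from the irreducibility of $U\times_{U/\GB}\Spec(R)$, which rests on universal submersiveness of the quotient; and $\lambda$ a priori lives only in a finite extension of $K'$, the transporter being a torsor under a possibly nontrivial stabilizer (harmless, but it should be said).
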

\begin{proof}
The proof occupies the whole of \S\ref{proof ugm proper}.
\end{proof}

\begin{tm}[\bf  Compactification Theorem II]\la{ugm proper 2}
Assumption \ref{assumption p}  on $S/B/J$ are  in vigour.
Suppose $Z/S$ is an $S$-scheme with a $\GB$-action that is compatible with the trivial $\GB$-action on $S$.
Assume that there is a $\GB$-linearized relatively ample line bundle on $Z/S$. 
Suppose that the fixed point set $W \subseteq Z$ is proper over $S$, and that for any $z \in Z$
the limit  $\lim_{t \to 0} t \cdot z$  exists in $W$. 
Let $U \subseteq  Z$ be the subset of points $z$ such that the limit
$\lim_{t \to \infty} t \cdot z$ does not exist in $Z$. 
Then $U$ is open and there exists a uniform geometric quotient
$U\to U/\GB$ by the action of $\GB$. This geometric quotient is separated and proper over $S$.
\end{tm}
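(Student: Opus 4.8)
The plan is to realize Theorem \ref{ugm proper 2} as an instance of the Compactification Theorem I \ref{ugm proper}: I would $\GB$-equivariantly complete $Z$ to a projective $S$-scheme $X$ and then choose the partition of the fixed locus of $X$ so that the set $U$ produced by Theorem \ref{ugm proper} coincides with the set $U$ of the present statement.

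First I would construct $X$. Since $Z/S$ carries a $\GB$-linearized relatively ample line bundle $L$ and $\GB$ acts trivially on $S$, the sheaves $\pi_*L^{\ot n}$ (for $\pi\colon Z\to S$) inherit a $\GB$-action, i.e. a weight grading; choosing weight-homogeneous relative generators for $n\gg 0$ yields, after replacing $L$ by a power, a $\GB$-equivariant locally closed $S$-immersion $Z\hra \Pro_S(\mathcal E)$ into the projectivization of a $\GB$-equivariant coherent sheaf $\mathcal E$ on $S$, endowed with its induced linear $\GB$-action. Let $X:=\ov{Z}$ be the scheme-theoretic closure: it is a $\GB$-stable closed subscheme, $Z$ is open in $X$, and $\partial:=X\setminus Z$ is closed and $\GB$-stable. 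Then $X\to S$ is projective, and $\mathcal O_X(1)$ twisted by the pullback of an ample sheaf on $S$ (with trivial linearization) is a $\GB$-linearized ample line bundle; hence $X/S/B/J$ satisfies Assumption \ref{assumption p}. Because $X\to S$ is proper, both limits $\lim_{t\to 0}t\cdot x$ and $\lim_{t\to\infty}t\cdot x$ exist for every point $x$ of $X$.

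Next I would pick the partition. Let $V\subseteq X$ be the fixed locus and set $V_+:=W=V\cap Z$ and $V_-:=V\cap\partial$. These are disjoint and cover $V$; $V_+$ is open in $V$ since $Z$ is open in $X$, and it is also closed because $W\to S$ is proper while $X\to S$ is separated, so $W\hra X$ is a proper immersion, i.e. a closed immersion (this is exactly where the properness of $W$ over $S$ is used). Thus $V=V_+\cup V_-$ is a partition into clopen pieces. The order conditions of Definition \ref{define V+} then hold: if $u\le w$ with $w\in W$ via a chain as in Definition \ref{def order}, then, tracing the chain backwards from $w$ and using that $\partial$ is closed and $\GB$-stable together with the hypothesis $\lim_{t\to 0}t\cdot z\in W$ for all $z\in Z$, every member of the chain lies in $Z$ and every $0$-limit lies in $W$, so $u\in W=V_+$; symmetrically, if $w'\le v$ with $w'\in V\cap\partial$, the chain remains inside the closed $\GB$-stable set $\partial$, forcing $v\in V\cap\partial=V_-$.

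It remains to identify the two sets $U$, which I regard as the heart of the argument. With this partition one computes $Y_+=\{x:\lim_{t\to\infty}t\cdot x\in W\}$ and $Y_-=\{x:\lim_{t\to 0}t\cdot x\in V\cap\partial\}$, whence $U=X\setminus(Y_+\cup Y_-)=\{x\in X:\lim_{t\to 0}t\cdot x\in W,\ \lim_{t\to\infty}t\cdot x\notin W\}$. For $x\in\partial$ one has $\lim_{t\to 0}t\cdot x\in V\cap\partial$, so every boundary point is excluded; for $z\in Z$ the condition $\lim_{t\to 0}t\cdot z\in W$ is automatic, and since $Z$ is locally closed in the separated scheme $X$, the point $\lim_{t\to\infty}t\cdot z$ lies in $Z$ (equivalently in $W$) if and only if it already exists in $Z$. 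Therefore $U$ is precisely the locus of $z\in Z$ whose $\infty$-limit does not exist in $Z$, matching the present statement; in particular $U$ is open by Theorem \ref{ugm proper}(1), and conclusions (2)--(4) of Theorem \ref{ugm proper} supply the uniform geometric quotient $U\to U/\GB$, separated and proper over $S$. The main obstacle is thus not the completion itself but the verification that $V=V_+\cup V_-$ obeys Definition \ref{define V+} and yields exactly the intended $U$; it is precisely here that the two hypotheses of Theorem \ref{ugm proper 2}, namely the properness of $W$ over $S$ and the existence of all $0$-limits in $W$, are consumed.
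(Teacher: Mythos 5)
Your proposal is correct and follows essentially the same route as the paper's own proof: equivariantly embed $Z$ via the linearized relatively ample bundle, take the closure $X$, set $V_+:=W$ and $V_-:=V\cap(X\setminus Z)$, check Definition \ref{define V+} and the identification of $U$, and invoke Theorem \ref{ugm proper}. You have merely written out the verifications that the paper leaves as "the rest of the proof consists of showing that $V_+$ and $V_-$ have the desired properties," and those verifications are sound.
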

\begin{proof}
This follows from Theorem \ref{ugm proper} in the same way in which \ci[Thm. 11.2]{si naf}
follows from \ci[Thm. 11.1]{si naf}. We only  reproduce some of the highlights of the  proof. 
Use the $\GB$-linearized relatively ample line bundle on $Z/S$ to embed $Z/S$  $\GB$-equivariantly into some $\pn{N}_S$ as a locally closed subvariety. Take the closure and call it $X/S.$
Let $V \subseteq X$ be the fixed point set. Define   $V_+:=W$ to be the fixed point set in $Z.$ Let $V_-:= V \cap (X\setminus Z).$
The rest of the proof consists of showing that $V_+$ and $V_-$ have the desired properties, and that $U$, as it is defined in the statement of this theorem, is indeed
$X \setminus (Y_+\cup Y_-).$ At this juncture, one applies Theorem \ref{ugm proper}.
\end{proof}


{\bf Setup for the Projectivity Theorem III \ref{proj tm}.}
Assumption \ref{assumption p}  on $S/B/J$ are  in vigour.
Let $Z$ and $Z'$ be  varieties over $S,$
endowed with a $\GB$-action covering the trivial $\GB$-action over $S$, so that the structural morphisms $Z,Z'\to S$ are $\GB$-equivariant.
Let $Z\to Z'$ be a $\GB$-equivariant $S$-morphism.

\begin{tm}[\bf Projectivity Theorem III]\la{proj tm} 
Let $U\subseteq Z$ ($U' \subseteq Z'$, resp.)  be the subset
such that the $\infty$-limits do not exist. 
Assume that 
\ben
\item[(a)]
$Z/S$ and $Z'/S$ carry  relatively  ample line bundles admitting $\GB$-linearizations. 
\item[(b)]
The fixed point set $V \subseteq Z$ is proper over $S.$ 
\item[(c)]
The  $0$-limits exist in $Z$.
\item[(d)]
At least one of the following two conditions is met
\ben
\item[(i)] the $\GB$-equivariant $S$-morphism $Z\to Z'$ is surjective; 
\item[(ii)]  the fixed point set 
${V'} \subseteq Z'$ is proper over $S$ 
and the $0$-limits exist in $Z'.$

\een 
\item[(e)]
 $U$ is the preimage of $U'$ (this is automatic if $Z \to Z'$ is proper).
 \een
Then: 

\ben
\item
$U$  ($U'$, resp,) is open in $Z$ ($Z'$, resp.);  

\item
The morphism $U\to U'$ descends to a proper $S$-morphism  $U/\GB \to U'/\GB$  between the  geometric quotients, both of which are proper and separated over $S;$

\item

\ben
\item
the descended morphism 
$U/\GB  \to U'/\GB$ is projective; 
\item
if, in addition, $(U'/\GB)/S$ is also  projective, then $(U/\GB)/S$ is projective.
\een

\een
\end{tm}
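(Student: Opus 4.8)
The plan is to deduce the whole statement from two applications of the Compactification Theorem II \ref{ugm proper 2}, one to $Z$ and one to $Z'$, followed by a descent of the morphism $f\colon Z\to Z'$ and an analysis of the induced polarization. For $Z$ the hypotheses of \ref{ugm proper 2} hold verbatim: the $\GB$-linearized relatively ample bundle is furnished by (a), the fixed locus $V$ is proper over $S$ by (b), and by (c) the $0$-limits exist in $Z$ — and a $0$-limit is automatically $\GB$-fixed, hence lies in $V$. Thus \ref{ugm proper 2} already gives that $U\subseteq Z$ is open and that the uniform geometric quotient $U\to U/\GB$ exists and is separated and proper over $S$, which is the $Z$-part of (1) and (2).

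For $Z'$ I would split according to (d). Under (d)(ii) the hypotheses of \ref{ugm proper 2} hold for $Z'$ directly. Under (d)(i) I would derive them from surjectivity of $f$: by uniqueness of the extension of the orbit map and separatedness of $Z'/S$ one has $f(\lim_{t\to 0}t\cdot z)=\lim_{t\to 0}t\cdot f(z)$, so $f$ surjective and (c) force the $0$-limits to exist in $Z'$ as well. Writing $r\colon Z\to V$ and $r'\colon Z'\to V'$ for the resulting $0$-limit retractions, the same identity gives $r'\circ f=f\circ r$, hence $V'=r'(Z')=f(r(Z))=f(V)$ as sets; since $V/S$ is proper and $Z'/S$ is separated, $f|_V\colon V\to Z'$ is proper, so $V'$ is closed and proper over $S$. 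In either case \ref{ugm proper 2} applies to $Z'$ and yields (1) together with the existence, separatedness and properness over $S$ of $U'\to U'/\GB$. To descend $f$, note that (e) gives $U=f^{-1}(U')$, so $f$ restricts to a $\GB$-equivariant $S$-morphism $U\to U'$; composing with $q'\colon U'\to U'/\GB$ produces a $\GB$-invariant morphism, which by the categorical property of the geometric quotient $q\colon U\to U/\GB$ factors uniquely as $\bar f\circ q$. Since $U/\GB\to S$ is proper and $U'/\GB\to S$ is separated, $\bar f$ is proper, proving (2).

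For (3)(a) I would exhibit an $\bar f$-ample line bundle. Restricting $L$ from (a) to $U$ gives a $\GB$-linearized, relatively $S$-ample bundle; the action on $U$ has no fixed points, hence only finite stabilizers, so a suitable power $L^{\otimes n}|_U$ descends to a line bundle $M$ on $U/\GB$. I would then check $\bar f$-ampleness fibrewise: over a geometric point $y'$ of $U'/\GB$ with a lift $u'\in U'$, condition (e) ensures $f^{-1}(u')\subseteq U$, the fibre $\bar f^{-1}(y')$ is the quotient of $f^{-1}(u')$ by the finite stabilizer of $u'$, and $M$ restricts there to the descent of $L^{\otimes n}|_{f^{-1}(u')}$. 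As $L$ is ample on the ambient projective fibre of $Z/S$, its restriction to the subscheme $f^{-1}(u')$ is ample, and ampleness descends along the finite surjective quotient onto $\bar f^{-1}(y')$; so $M$ is ample on every fibre of $\bar f$, whence $M$ is $\bar f$-ample and $\bar f$ is projective. I expect this descent of the polarization to be the main obstacle: one must track the finite stabilizers through Kempf-type descent and invoke that relative ampleness is fppf-local on the base and descends along finite surjective maps. Hypothesis (e) is exactly what makes the fibres of $\bar f$ into quotients of whole fibres of $f$ contained in $U$, which is why it is indispensable here, and also why $M$ need not be $S$-ample.

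Finally, (3)(b) is formal: given a relatively ample bundle $A'$ for $(U'/\GB)/S$, the bundle $M\otimes \bar f^{*}(A')^{\otimes m}$ is relatively $S$-ample for $m\gg 0$, so $(U/\GB)/S$ is projective.
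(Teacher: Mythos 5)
Your proposal is correct and follows essentially the same route as the paper: apply the Compactification Theorem II to $Z$ and to $Z'$ (noting that (d)(i) implies (d)(ii)), descend the morphism by $\GB$-equivariance and deduce its properness from properness of $(U/\GB)/S$ and separatedness of $(U'/\GB)/S$, then descend a power of the linearized ample bundle using the finiteness of stabilizers on $U$ (Kempf/Alper descent) and verify relative ampleness on the fibers of the proper descended morphism. The only cosmetic difference is that the paper routes the line-bundle descent explicitly through Alper's Theorem 10.3 (tame and good moduli space, trivial stabilizer action on a tensor power), which is the precise form of the "suitable power descends" step you sketch.
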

\begin{proof}
The proof is identical to the one in \ci[Prop. 3.2.2]{de cpt}. Note that in loc.cit. the current assumption (e) is replaced by the assumption
that $Z/Z'$ is proper; the proof in loc.cit. works with the current assumption in place of the properness assumption on $Z/Z'$.

For the reader's convenience, we discuss briefly  the structure of the proof.
Parts (1,2)  can be proved along the same lines of the proof of  Theorem \ref{ugm proper 2}.
We simply note the following: the assumption  (d.i) on surjectivity  implies easily the assumption (d.ii) on the properness of the
fixed point set and the existence of $0$-limits.
One applies the Compactification Theorem II \ref{ugm proper 2} to $Z$ and to $Z'$ to find the uniform geometric
quotients $U/\GB$ and $U'/\GB$.  The descended morphism  $U/\GB \to U'/\GB$ between the uniform geometric quotients arises from the $\GB$-equivariance of the morphism $U\to U'.$
The properness and separateness over $S$ of these quotients follow from Theorem \ref{ugm proper 2}. 
The properness of the descended morphism follows from
the properness of $(U/\GB)/S$.  

What needs proof is part (3).
Part (3) is proved in \ci[Prop.\;3.2.2]{de cpt} (the set-up there is the one of characteristic zero, but the proof works for arbitrary base scheme $B$).

The key part is (3.a):

The proof in \ci[Prop.\;3.2.2]{de cpt} relies on Kempf's Descent Lemma \cite[Thm.\;2.3]{dr-na}, which is stated over fields of characteristic  zero.
A generalization of Kempf's Descent Lemma to the case over a more general scheme
can be found in \cite[Thm.\;10.3]{alper} and \cite[Thm.\;1.3.(iii)]{rydh-2020}.

To apply \cite[Thm.\;10.3]{alper}, we need to show that:
(i) The uniform geometric quotient $U/\GB$ given by Theorem \ref{ugm proper} is a good and tame moduli space for the quotient stack $[U/\GB]$;
(ii) some tensor power of the $\GB$-linearized ample line bundle on $U$ has trivial stabilizer action at closed points, in the sense of \cite[Def.\;10.1]{alper}.

(i) follows from Remark \ref{tamegood}.
(ii) follows from the fact that the stabilizers of the closed points of $U$ are finite subgroup schemes of $\Gm$ over the corresponding residue fields, and that for a finite group scheme $G$ of order $n$ over a field, the $n$-th power morphism $g\mapsto g^n: G\ra G$ is the identity morphism, see \cite[Prop.\;11.32]{milne}.

Then  one proves the $(U/\GB)/(U'/\GB)$-ampleness
of the descended line bundle by observing that it is ample on the fibers of the proper  morphism $(U/\GB)/(U'/\GB)$.
\end{proof}

\begin{rmk}[\bf Comparison of  Theorems \ref{ugm proper} and \ref{ugm proper 2} with \ci{si naf}]\label{comparison with si}$\;$ 

\n
The Compactification Theorems I,II \ref{ugm proper}, 
\ref{ugm proper 2}
are  stated and proved in \ci[Thm. 11.1, 11.2]{si naf} over the complex numbers, where loc.cit. Thm. 11.2 is a corollary to loc. cit Thm. 11.1,
the same way the Compactification Theorem  II \ref{ugm proper 2} follows from the Compactification Theorem I  \ref{ugm proper}.

 As it is observed in \ci[\S3.2]{de cpt},  C.Simpson's \ci[Thm.\;11.1, 11.2]{si naf}
are missing a seemingly necessary hypothesis on the existence of a $\Gm$-linearized $X/S$-ample line bundle.
This minor point out of the way, all the  necessary ideas are clearly stated by C. Simpson in \ci[\S11]{si naf}. We felt that some details were  present only in implicit form, and then only within 
a characteristic zero setup.
Since in this paper we need these results also over a base, we felt the need to write a detailed proof 
of the Compactification Theorem  I \ref{ugm proper}.  
Again, all the ideas in the proof of the Compactification Theorems I, II are due to C. Simpson.
\end{rmk}

\begin{rmk}[\bf Comparison of Theorem  \ref{ugm proper}  with \ci{bi-sw}]\label{relation with bi-sw}$\;$

\n
When $S=\Spec(k)$, the set $U$ of Theorem \ref{ugm proper} is called a sectional set in \cite[Def.\;1.2]{bi-sw};
the same paper also considers semi-sectional sets. 
The most obvious difference between sectional and semi-sectional sets is that, unlike a sectional set,  a semi-sectional set may contain some, but not arbitrary, $\Gm$-fixed point.
For a semi-sectional set $U'$, it is also proved in \cite[Thm.\;3.1]{bi-sw} that $U'/\Gm$ is a semi-geometric quotient.
The difference between a geometric and a semi-geometric quotient is that a point in a semi-geometric quotient may corresponds to multiple orbits. In this paper, we do not consider semi-sectional sets.

The proof in \cite[Thm.\;3.1]{bi-sw} is obtained by first establishing what are the possible configurations of  fixed-point sets 
for actions of $\Gm$ on projective spaces $\Proj^N_\field$. 
One equivariantly embeds $X$ in some $\pn{N}_\field$ by using the ample 
$\Gm$-linearized line bundle.
This is followed by an inductive analysis, and here we summarize very roughly, of how the fixed-point set  on $X$ is related to 
the fixed-point set  of the ambient $\Proj^N_k$. 
In the relative case, it is not clear to us how to piece together the possible global configurations of the fixed-point sets  of $(\Proj^N_S,X)$
fiber-by-fiber over $S$. Therefore, it is not clear to us how to modify the proof in \cite[Thm.\;3.1]{bi-sw} to make it work in the relative case  over $S$ we 
are working with.
\end{rmk}

\begin{rmk}[\bf Comparison of  Theorem \ref{proj tm} with \ci{de cpt}]\label{comparison with dec}$\;$ 

(a)
The items (1), (3), and (4) of Projectivity Theorem III \ref{proj tm} are essentially borrowed from \ci[Prop. 3.2.2]{de cpt}.
loc.cit.  is stated and proved  in characteristic zero, but,
once the Compactification Theorems I, II, \ref{ugm proper} and \ref{ugm proper 2} are in place, the proof carries over to arbitrary characteristic.

(b)
Moreover, we remove from \ci[Prop. 3.2.2]{de cpt} the hypothesis that $Z\to Z'$ is proper, and we replace it with the weaker hypothesis
that $U$ is the preimage of $U'.$ Observe    that the preimage of $U'$ sits inside $U$ automatically. If one assumes that  $Z\to Z'$
is proper, then one shows that  the preimage of $U'$ is $U.$ 
In all the  applications of the Projectivity Theorem III  that we provide in this paper,
i.e. in \S\ref{rt51},  the sets $U$ and $U'$ are constructed, and the preimage of $U'$ is verified to be $U$ by inspection of the construction.
In all such applications, we have that $U'/U$ is proper. 
We ignore if the assumptions of Theorem \ref{proj tm} imply that $U/U'$ must be proper.
\end{rmk}

\subsection{Applications to Projectivity in Non Abelian Hodge Theory}\la{rt51}$\:$

We introduce the setup for our main projectivity results, Theorems \ref{cpt tm hod}  (Hodge/$t$-connections), \ref{cpt tm dr} (de Rham/flat connections) 
and \ref{cpt tm dol} (Dolbeault/Higgs bundles).

The context is the one of moduli spaces of
$t$-connections on a curve,  which is a kind of umbrella covering, in some sense, Higgs bundles and flat connections.
The notion of $t$-connections was introduced and studied  by C. Simpson \ci{si naf} over the complex numbers,
 and by  Y. Lazslo and C. Pauly  \ci{la-pa} in positive characteristic. 

{\bf Smooth Curves.}
Let $\Bbase$ be a noetherian scheme that is finite type over a universally Janpanese ring $J$.
Let $\pi: C\ra \Bbase$ be a projective and smooth family of geometrically connected curves.
We record such a family of curves as
\begin{equation}
\label{smcurve}
    C/\Bbase/J.
\end{equation}

{\bf Rank $r$ and degree $d.$}
We fix the rank $r$ and degree $d$ of the vector bundles underlying Higgs bundles, connections and $t$-connections. When relevant, in context, we make further  assumptions on rank and degree, and sometimes on the characteristic.

{\bf The Hodge Moduli Space.}
A $t$-connection on $C/B$ is a triple $(E, t,\nabla_t)$, where $E$ is vector bundle, $t\in H^0(\Bbase,\mathcal{O}_{\Bbase})$, and $\nabla_t:E\ra E\otimes_{\mathcal{O}_{\Bbase}}\omega_{C/\Bbase}$ 
is an $\mathcal{O}_{\Bbase}$-linear morphism of $\mathcal{O}_C$-modules so that for every $f$, a local section of $\mathcal{O}_E$, and $s$, a local section of $E$, we have that $\nabla_t(fs)=tdf\otimes s+f\nabla_t(s)$.

By \cite[Thm.\;1.1]{la-2014}, there exists a quasi projective $\Bbase$-scheme $M_{Hod}(C/\Bbase)$,
which is the coarse moduli space of slope semistable $t$-connections of rank $r$ and degree $d$ on $C/\Bbase$.

\begin{rmk}\la{jj}
For the notions of universally/uniformly corepresenting, see \cite[Thm.\;1.1]{la-2014}.
The coarse moduli space uniformly corepresents  the functor of semistable families; the stable part is open and universally corepresents the functor of stable families; when rank and degree are coprime, stability equals semistability, and we have universal corepresentability. In particular, in the stable case, taking fibers commutes with
taking the coarse moduli space. 
\end{rmk}

Considering $t$ as a section of $\mathbb A^1_{\Bbase}$ over $\Bbase$, the assignment $(t,E,\nabla_t)\mapsto t$ defines a natural morphism of $\Bbase$-schemes:
\begin{equation}
\label{deftau}
\tau_{Hod}(C/\Bbase): M_{Hod}(C/\Bbase)\longrightarrow \mathbb A^1_{\Bbase}.
\end{equation}

{\bf Frobenius.}
Let  $J$ be of characteristic $p>0$, with $p$ a prime number. 
Let $q: T\ra \Bbase$ be a $\Bbase$-scheme.
Let $fr_T: T\ra T$ be the absolute Frobenius, i.e.  the identity on the topological space,  with comorphism $a \mapsto a^p.$
Let $T^{(\Bbase)}:=T\times_{\Bbase, fr_{\Bbase}} \Bbase$ be the Frobenius twist of $T$ relative to $B$.
We have the following commutative diagram
\begin{equation}
    \xymatrix{
    T\ar[rr]_-{Fr_T} \ar@/^1pc/[rrr]^-{fr_T} \ar[drr]_{q} && T^{(\Bbase)} \ar[r]_{\sig_{T}} \ar[d]_-{q^{(\Bbase)}} & T\ar[d]\\
    && B\ar[r]_-{fr_B}& B.
    }
\end{equation}

{\bf The Hodge-Hitchin Morphism.}
Let $J$ be a field of characteristic $p>0$.
Given any $t$-connection $\nabla_t$ on $C/\Bbase$,
\cite[\S3.5]{la-pa} defines the $p$-curvature $\Psi(\nabla_t)$ of $\nabla_t$, 
which is an $\mathcal{O}_C$-linear morphism $E\ra E\otimes_{\mathcal{O}_{\Bbase}}\omega_{C/\Bbase}^{\otimes p}$.
Let $A(C/B,\omega_{C/\Bbase}^p)$ be the vector bundle associated with the locally free sheaf 
$\bigoplus_{i=1}^r\pi_*\omega_{X/\Bbase}^{\otimes ip}$  (recall that we have fixed rank $r$ and degree $d$  for the Hodge moduli space).
Taking the characteristic polynomial of $\Psi(\nabla_t)$ defines a morphism 
$cp: M_{Hod}(C/B)\ra A(C/\Bbase, \omega_{C/\Bbase}^p)$.
Let $A(C^{(\Bbase)}/\Bbase, \omega_{X^{(\Bbase)}/\Bbase})$
be the total space of the vector bundle $\bigoplus \pi^{(B)}_*\omega_{X^{(\Bbase)}/\Bbase}^{\otimes i}$.
The Frobenius pull back $Fr_{C/\Bbase}^*$ defines a closed immersion 
$A(C^{(\Bbase)}/\Bbase,  \omega_{X^{(\Bbase)}/\Bbase})\hra A(C/\Bbase, \omega_{C/\Bbase}^p)$.
\cite[Prop.\;3.2]{la-pa} shows that 
there exists natural factorization of $(cp, \tau_{Hod}): M_{Hod}(C/\Bbase)\ra A(C/\Bbase, \omega_{C/\Bbase}^p)\times \mathbb A^1_{\Bbase}$ as
\begin{equation}
\label{eqdefh}
    M_{Hod}(C/\Bbase) \xra{h_{Hod}(C/B)} A(C^{(\Bbase)}/\Bbase, \omega_{X^{(\Bbase)}/\Bbase})\times\mathbb A^1_{\Bbase}
    \xra{(Fr_{C/\Bbase}^*, \text{Id}_{\mathbb A^1_{\Bbase}})} A(C/\Bbase, \omega_{C/\Bbase}^p)\times \mathbb A^1_{\Bbase}.
\end{equation}

The quasi-projective morphism $h_{Hod}(C/\Bbase)$ in (\ref{eqdefh}) is called the Hodge-Hitchin morphism.
Note that \ci[Prop. 3.2]{la-pa} contains a minor inaccuracy, as it declares the target of $H$ to be
$A\times_B \mathbb A^1$.

The diagram (\ref{eqdefh}) is made of $\Bbase$-schemes endowed with $\GB$-actions so that the morphisms are $\GB$-equivariant.
The action on $M_{Hod}$ is given by $t\cdot (E, \nabla_{s}):= (E, \nabla_{ts})$. 
Let $\mathbb A_i'$ (resp. $\mathbb A_i^p$) be the direct factor of $A(C^{(\Bbase)}/\Bbase)$ 
(resp. $A(C/\Bbase,\omega_{C/\Bbase}^p)$) that is the vector bundle associated to the locally free sheaf $\pi_*^{(\Bbase)}\oxb^{\otimes i}$ (resp. $\pi_*\omega_{X/\Bbase}^{ip}$).
The action on $A(C^{(\Bbase)}/\Bbase)$ (resp. $A(C/\Bbase,\omega_{C/\Bbase}^p)$) is given by the standard dilation weight $ip$ actions
on each  direct factor $\mathbb A'_i$ (resp. $\mathbb A_i^p$). 
The action on $\mathbb A^1_{\Bbase}$ is the usual weight one dilation action.

{\bf Dolbeault Moduli Space and Hitchin Morphism.}
Let $M_{Dol}(C/B)$ be the coarse moduli space of slope semistable Higgs bundles of fixed rank $r$ and degree $d$ on $C/\Bbase.$
$M_{Dol}(C/\Bbase)$ is quasi-projective, see \cite[Thm.\;1.1]{la-2014}. 
Let $A(C/\Bbase; \omega_{C/\Bbase})$ be the vector bundle associated to the locally free sheaf
$\bigoplus_{i=1}^r \pi_*\omega_{C/\Bbase}^{\otimes i}$.
Let 
\beq\la{hitz mo}
h_{Dol}(C): M_{Hod}(C/\Bbase)\longrightarrow A(C/\Bbase,\omega_{C/\Bbase})
\eeq
be the Hitchin morphism that sends a Higgs field to the coefficients of its characteristic polynomial.

If $J$ is a field of positive characteristic, then
there exists a natural isomorphism $A(C/\Bbase,\omega_{C/\Bbase})^{(\Bbase)}\cong A(C^{(\Bbase)}/\Bbase,\omega_{C^{(\Bbase)}/\Bbase})$ (See Lemma \ref{lemma frfr}).
Let 
$h_{Hod}(C/B)_{0_{\Bbase}}: M_{Hod}(C/\Bbase)_{0_{\Bbase}}\ra A(C^{(\Bbase)}/\Bbase)$
be the base change of the Hitchin morphism via the closed immersion $0_{\Bbase}\hra \mathbb A^1_{\Bbase}$.
There exists a natural morphism $M_{Dol}(C/\Bbase)\ra M_{Hod}(C/\Bbase)_{0_{\Bbase}}$ that is bijective on geometric points.
Lemma \ref{frfactor} shows that there exists the following commutative diagram of $\GB$-equivariant morphisms
\begin{equation}
\label{hfactor}
\xymatrix{
    M_{Hod}(C/\Bbase)_{0_{\Bbase}} \ar[r]^-{h_{Hod,0_{\Bbase}}} &
    A(C^{(\Bbase)}/\Bbase) \ar[r]^-{\simeq} &
    A(C/\Bbase)^{(\Bbase)}\\
    M_{Dol}(C/\Bbase) \ar[r]_-{h_{Dol}} \ar[u]&
    A(C/\Bbase). \ar[ur]_-{\;\;\;\;Fr_{A(C/\Bbase)/\Bbase}}
    }
\end{equation}

{\bf de Rham Moduli Space and de Rham-Hitchin Morphism.}
A flat connection is a $t$-connection with $t=1$.
Let $M_{dR}(C)$ be the moduli space of semistable flat connections of fixed rank $r$ and degree $d$.
By \cite[Thm.\;1.1]{la-2014}, the de Rham moduli space $M_{dR}(C)$ is quasi-projective.

When $J$ is a field of positive characteristic, 
there is the natural morphism $M_{dR}(C)\ra M_{Hod}(C)\times_{\mathbb A^1_{\Bbase}} 1_{\Bbase}$, 
which, by Lemma \ref{dreq}, is an isomorphism.
The restriction $h_{dR}(C)$ of $h_{Dol}(C)$ to $M_{dR}(C)$ is called the de Rham-Hitchin morphism. 
Lemma \ref{dreq}.(2) shows that the restriction 
$h_{Hod}(C)_{\GB}: M_{Hod}(C)\times_{\mathbb A^1_{\Bbase}} \GB\ra A(C^{(\Bbase)})\times_{\Bbase}\GB$ admits a $\GB$-equivariant trivialization as $h_{dR}(C)\times \text{Id}: M_{dR}(C)\times_{\Bbase} \GB\ra A(C^{(\Bbase)})\times_{\Bbase} \GB$.


{\bf Statement of Results.}

Our first result is the Projective Completion of $\tau: M_{Hod} \to \mathbb A^1$ Theorem \ref{comphodj},
to the effect that there is a natural $\Gm$-equivariant projective completion  $\ov{\tau}: \ov{M_{Hod}} \to \mathbb A^1$ of the morphism $\tau: M_{Hod}
\to \mathbb A^1$.  
If we further require that the base ring $J$ is a field of positive characteristic, 
we can also extend the Hodge-Hitchin morphism $h_{Hod}: M_{Hod} \to A(C^{(\Bbase)}/\Bbase)\times \mathbb A^1$ and prove that
the Hodge-Hitchin morphism $h_{Hod}$ is proper, in fact projective (Theorem \ref{cpt tm hod}). 
To our knowledge, the properness of $h_{Hod}$ has not been addressed before.
\ci[Prop. 5.1]{la-pa} addresses the special case of  nilpotent $t$-connections, i.e. the properness of $h_{Hod}$ over the locus $0_{A} \times \mathbb A^1$. 
In our proof, we leverage on  this special case;
in fact, we only need the properness of the nilpotent cone $N_{Dol}$, i.e. that of the Hitchin fiber 
$h_{Dol}^{-1}(0_{A})$ over the origin 
$0_{A}\in A(C/\Bbase)$.
The special case  of this projectivity result when rank and degree are coprime (which rules out the case of degree zero, for example) is established
by an ad hoc method in \ci{de-szh naht}.

 \begin{tm}[\bf Projective Completion of $\tau: M_{Hod} \to \mathbb A^1$]
 \label{comphodj}
Let the smooth curve $C/\Bbase/J$ be as in (\ref{smcurve}).
We have the following commutative $\GB$-equivariant diagram
\begin{equation}
\label{eq001}
    \xymatrix{
    M_{Hod}(C/\Bbase) \ar@{^{(}->}[r] \ar[d]_{\tau_{Hod}(C/\Bbase)} & 
    \ov{M_{Hod}(C/\Bbase)} \ar[d]^-{\ov{\tau_{Hod}(C/\Bbase)}}\\
    \mathbb A^1_{\Bbase}\ar[r]^-{\simeq} & \mathbb A^1_{\Bbase}, 
    }
\end{equation}
where :
\ben
\item
The top horizontal arrow is an open immersion with dense image, dense in every fiber of $\ov{\tau_{Hod}(C/\Bbase)}$;
\item
The morphism $\ov{\tau_{Hod}(C/\Bbase)}$ is projective.
\een
\end{tm}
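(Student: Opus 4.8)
The plan is to deduce Theorem \ref{comphodj} from the Projectivity Theorem III \ref{proj tm}, exactly as the introduction advertises, by exhibiting the completion $\overline{M_{Hod}(C/\Bbase)}$ together with $\overline{\tau_{Hod}}$ as the descended morphism $U/\GB\to U'/\GB$ produced by that theorem. Recall that $\GB$ acts on $M_{Hod}(C/\Bbase)$ by $\lambda\cdot(E,\nabla_s)=(E,\nabla_{\lambda s})$, and that with this action $\tau_{Hod}$ is $\GB$-equivariant for the weight-one dilation on $\mathbb A^1_{\Bbase}$. Since this action does not fix $\tau_{Hod}$, one cannot quotient $M_{Hod}$ directly over $\mathbb A^1_{\Bbase}$; instead I would set $S:=\mathbb A^1_{\Bbase}$ (with its trivial $\GB$-action) and build a $\GB$-variety $Z$ over $S$, fiberwise for the $\GB$-action, into which $M_{Hod}(C/\Bbase)$ embeds as a slice transverse to the orbits, so that the saturation of this slice recovers $M_{Hod}(C/\Bbase)$ inside $U/\GB$ and the descended structural morphism restricts to $\tau_{Hod}$. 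Concretely, $Z$ is assembled from a $\GB$-equivariant projective embedding of $M_{Hod}(C/\Bbase)$ --- available because $M_{Hod}(C/\Bbase)$ is quasi-projective by \ci[Thm.\;1.1]{la-2014} and carries a $\GB$-linearized ample line bundle --- together with an auxiliary line carrying a compensating weight; one takes $Z'$ to be the corresponding linear model over $S$ with $U'/\GB\cong S=\mathbb A^1_{\Bbase}$, and $Z\to Z'$ the evident $\GB$-equivariant $S$-morphism. This is the same mechanism Simpson uses for $M_{dR}$ (see diagram (\ref{act hh})), modified so as to retain $\mathbb A^1_{\Bbase}$ as the base rather than collapsing it to a point or compactifying it to $\mathbb P^1_{\Bbase}$.

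With the data $Z\to Z'$ over $S=\mathbb A^1_{\Bbase}$ in place, the bulk of the work is to verify the hypotheses (a)--(e) of Theorem \ref{proj tm}. Hypothesis (a), the existence of $\GB$-linearized relatively ample bundles, comes from Langer's quasi-projectivity together with the chosen linearization. The two substantive points are (b) and (c). For (b), I would identify the $\GB$-fixed locus $V\subseteq M_{Hod}(C/\Bbase)$: a fixed triple $(E,t,\nabla_t)$ must have $t=0$ (as $\lambda t=t$ for all $\lambda$ forces $t=0$), hence is a Higgs bundle $(E,\phi)$ with $\lambda\phi\cong\phi$; such a Higgs field is nilpotent, so $V$ lands in the nilpotent cone $N_{Dol}=h_{Dol}^{-1}(0_A)$. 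Since the Hitchin morphism $h_{Dol}$ is proper, $N_{Dol}$ is proper over $\Bbase$, and the closed subscheme $V$ is then proper over $\Bbase$ --- this is precisely the one geometric input flagged in the introduction. For (c), the existence of the $0$-limits $\lim_{\lambda\to 0}(E,\lambda t,\lambda\nabla_t)$ in $M_{Hod}$ is a semistable-reduction statement furnished by Langer's Langton-type result \ci[Thm.\;5.1]{la-2014}; the corresponding limits in the auxiliary factor are arranged by the choice of weight in the construction of $Z$. Hypothesis (d) is checked on the linear model $Z'$ (or via surjectivity of $Z\to Z'$), and (e), that $U$ is the preimage of $U'$, is verified by direct inspection of the construction, as the paper does in all its applications.

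Granting the hypotheses, Theorem \ref{proj tm}(2) yields the separated proper $S$-morphism $U/\GB\to U'/\GB$, which I would identify with $\overline{\tau_{Hod}}:\overline{M_{Hod}(C/\Bbase)}\to\mathbb A^1_{\Bbase}$, and Theorem \ref{proj tm}(3.a) upgrades it to a projective morphism; this gives part (2) of the statement. For part (1), $M_{Hod}(C/\Bbase)$ appears as the image in $U/\GB$ of the open, orbit-transverse slice (the locus where the auxiliary coordinate is nonzero), so the inclusion $M_{Hod}(C/\Bbase)\hookrightarrow\overline{M_{Hod}(C/\Bbase)}$ is an open immersion; over $t\neq 0$ this slice recovers $M_{dR}$ and over $t=0$ it recovers $M_{Dol}$, while the fiberwise boundary is the projectivized Higgs locus $(M_{Dol}\setminus N_{Dol})/\GB$, of positive codimension in each fiber. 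Density of $M_{Hod}$ in every fiber of $\overline{\tau_{Hod}}$ then follows from fiberwise irreducibility and this codimension count.

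The main obstacle I anticipate is two-fold and concentrated in the setup and in (b)--(c). First, one must arrange the equivariant pair $Z\to Z'$ so that the descended morphism is genuinely $\overline{\tau_{Hod}}$ over the \emph{same} base $\mathbb A^1_{\Bbase}$: the naive products tend to produce instead a completion over $\mathbb P^1_{\Bbase}$ (compactifying the base) or over a point (collapsing to $\overline{M_{dR}}$), and one must choose the weights and the auxiliary line so that $U$ is exactly the preimage of $U'$, i.e. so that hypothesis (e) holds. Second, and more essential, is the geometric content of (b) and (c): reducing properness of the fixed locus to properness of the nilpotent cone $N_{Dol}$, hence to properness of $h_{Dol}$, and securing the existence of $0$-limits from Langer's Langton-type theorem. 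Everything else is the bookkeeping of descent, already packaged inside Theorems \ref{ugm proper 2} and \ref{proj tm}.
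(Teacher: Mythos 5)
Your proposal follows essentially the same route as the paper: the paper's proof builds exactly the auxiliary $\GB$-variety you describe, namely $Z:=M_{Hod}\times_{\mathbb A^1_\lambda}\mathbb A^2_{x,y}$ over $S=\mathbb A^1_x$ via $(x,y)\mapsto xy$ (diagram (\ref{act})), with $Z'=\mathbb A^2_{x,y}$ as the linear model, verifies the $0$-limits via Langer's Langton-type theorem and the properness of the fixed locus via the properness of the nilpotent cone $h_{Dol}^{-1}(o_{A'})$, and then applies Theorem \ref{proj tm}, recovering $M_{Hod}$ as the open slice $\{y\neq 0\}\cong M_{Hod}\times\GB$. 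Your identification of the key inputs (b) and (c) and of the weight bookkeeping needed for hypothesis (e) matches the paper's argument.
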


\begin{tm}[\bf Projective Completion of the Hodge-Hitchin Morphism]\la{cpt tm hod} $\;$
In the setup in Theorem \ref{comphodj}, if we further assume that $J$ is a field of characteristic $p>0$,
then
we have  the following commutative  $\GB$-equivariant diagram

\beq\la{eq01}
\xymatrix{
M_{Hod}(C/\Bbase)  \ar@{^{(}->}[rr] \ar[d]^-{h_{Hod}(C/\Bbase)}    \ar@/_4pc/[dd]_-{\tau_{Hod}(C/\Bbase)}& & \ov{M_{Hod}(C/\Bbase)}  \ar[d]_-{\ov{h_{Hod}(C/\Bbase)}} \ar@/^4pc/[dd]^-{\ov{\tau_{Hod}(C/\Bbase)}}
\\
A(C^{(\Bbase)}/\Bbase) \times \mathbb A^1_{\Bbase}  \ar@{^{(}->}[rr]  \ar[d]_-{pr} & & \ov{{A(C^{(\Bbase)}/\Bbase)}} \times \mathbb A^1_{\Bbase} \ar[d]^-{\ov{pr}}
\\
 \mathbb A^1_{\Bbase} \ar[rr]^-\simeq && \mathbb A^1_{\Bbase},
}
\eeq
where:

\ben
\item
The top square is Cartesian,  the  horizontal arrows are open immersions with dense  image,  dense in every fiber of 
$\ov{\tau_{Hod}(C/\Bbase)}$ and $\ov{h_{Hod}(C/\Bbase)}$.

\item
The morphisms $h_{Hod}(C/\Bbase), \ov{h_{Hod}(C/\Bbase)}$ and $\ov{pr}$ are proper, in fact projective ($pr$ is affine).

\item 
$\ov{A(C^{(\Bbase)}/\Bbase)}$ is the weighted projective space
${\mathbb P}(1, 1\cdot p, 2p, \ldots, rp)={\mathbb P}(1, 1, 2, \ldots, r)$ associated with the $\GB$-variety
$\mathbb A^1\times  \prod_{i=1}^r \mathbb A'_{i}$, where $\Gm$ acts as standard dilations
of  weight $1$ on  $\mathbb A^1$  and of weight $ip$ on the remaining factors.

\een
\end{tm}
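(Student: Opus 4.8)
The plan is to obtain the whole of (\ref{eq01}) as a single instance of the Projectivity Theorem III \ref{proj tm}, over the base $S:=\mathbb{A}^1_{\Bbase}$ equipped with the trivial $\GB$-action, the device being to homogenize the Hodge--Hitchin morphism by an auxiliary coordinate. Write $h_A:=\mathrm{pr}_{A}\circ h_{Hod}(C/\Bbase)\colon M_{Hod}(C/\Bbase)\ra A(C^{(\Bbase)}/\Bbase)$ for the first component of $h_{Hod}$, so that $h_{Hod}=(h_A,\tau_{Hod})$. I introduce two affine lines, $\mathbb{A}^1_{\lambda}$ with the weight-$1$ $\GB$-action and $\mathbb{A}^1_{\sigma}=:S$ with the trivial action, together with the multiplication morphism $\mathrm{mult}\colon\mathbb{A}^1_{\lambda}\times_{\Bbase}\mathbb{A}^1_{\sigma}\ra\mathbb{A}^1_{\Bbase}$, $(\lambda,\sigma)\mapsto\lambda\sigma$, which is $\GB$-equivariant for the weight-$1$ action on the target. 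I then set
\[
Z:=M_{Hod}(C/\Bbase)\times_{\mathbb{A}^1_{\Bbase}}\bigl(\mathbb{A}^1_{\lambda}\times_{\Bbase}\mathbb{A}^1_{\sigma}\bigr),\qquad
Z':=A(C^{(\Bbase)}/\Bbase)\times_{\Bbase}\mathbb{A}^1_{\lambda}\times_{\Bbase}\mathbb{A}^1_{\sigma},
\]
the first fibre product being taken along $\tau_{Hod}$ and $\mathrm{mult}$; I organize these into the diagram (\ref{act hh}). Both are $\GB$-schemes over $S$ via the projection to $\mathbb{A}^1_{\sigma}$, and the $\GB$-equivariant $S$-morphism $Z\ra Z'$ is $(m,\lambda,\sigma)\mapsto(h_A(m),\lambda,\sigma)$. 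The slice $\{\lambda=1\}$ identifies $M_{Hod}(C/\Bbase)$ with an open subscheme of $Z$ on which this morphism restricts to $h_{Hod}$; the fibre of $Z\ra S$ over $\sigma\neq0$ is $M_{Hod}(C/\Bbase)$ with its scaling action, while the fibre over $\sigma=0$ is $M_{Dol}(C/\Bbase)\times\mathbb{A}^1_{\lambda}$. Passing to the quotients furnished by \ref{proj tm} will then produce $U/\GB=\ov{M_{Hod}(C/\Bbase)}$, whose fibres over $S$ are the completions $\ov{M_{dR}}$ and $\ov{M_{Dol}}$, together with $U'/\GB=\ov{A(C^{(\Bbase)}/\Bbase)}\times\mathbb{A}^1_{\sigma}$.

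Next I would verify the hypotheses (a)--(e) of \ref{proj tm}. Hypothesis (a) follows from Langer's construction of $M_{Hod}(C/\Bbase)$ \ci[Thm. 1.1]{la-2014} together with the evident linearizations on the affine factors. The $\GB$-fixed locus $V\subseteq Z$ equals $\{(m,0,\sigma):m\in M_{Dol}(C/\Bbase)^{\GB}\}\cong M_{Dol}(C/\Bbase)^{\GB}\times S$; since a $\GB$-fixed Higgs bundle has nilpotent Higgs field, $M_{Dol}(C/\Bbase)^{\GB}$ is contained in the nilpotent cone $N_{Dol}=h_{Dol}^{-1}(0_{A})$, whence hypothesis (b) amounts to the properness of $N_{Dol}$ over $\Bbase$. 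Hypothesis (c), the existence of $0$-limits in $Z$, is the known fact that $\lim_{t\to0}t\cdot m$ exists in $M_{Hod}(C/\Bbase)$. For hypothesis (d) I use (d.ii): the fixed locus $V'=\{0\}\times\{0\}\times S\cong S$ of $Z'$ is proper over $S$ and the $0$-limits there plainly exist. Hypothesis (e), that $U$ is the preimage of $U'$, is where the geometry of $t$-connections enters: a point $m$ of $M_{Hod}(C/\Bbase)$ admits an $\infty$-limit for the $\GB$-action exactly when $m\in N_{Dol}$, and $N_{Dol}$ is precisely the zero locus $\{h_A=0\}$; unwinding the definitions, the $\infty$-limit exists in $Z$ and in $Z'$ under the same condition $\{\lambda=0,\ h_A(m)=0\}$, so the loci where it fails to exist coincide.

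Granting (a)--(e), \ref{proj tm} produces the separated proper quotients $U/\GB$ and $U'/\GB$ over $S$ and the descended morphism $\ov{h_{Hod}(C/\Bbase)}\colon\ov{M_{Hod}(C/\Bbase)}\ra\ov{A(C^{(\Bbase)}/\Bbase)}\times\mathbb{A}^1_{\sigma}$, which is projective by part (3.a). Because $\ov{A(C^{(\Bbase)}/\Bbase)}$ is proper over $\Bbase$, the target $U'/\GB$ is projective over $S$, so part (3.b) gives that $\ov{M_{Hod}(C/\Bbase)}$ is projective over $S=\mathbb{A}^1_{\Bbase}$; this yields the projectivity of $\ov{\tau_{Hod}(C/\Bbase)}=\ov{pr}\circ\ov{h_{Hod}(C/\Bbase)}$. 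Hypothesis (e), descended to the quotients, makes the top square of (\ref{eq01}) Cartesian, so $M_{Hod}(C/\Bbase)$ is the $\ov{h_{Hod}(C/\Bbase)}$-preimage of $A(C^{(\Bbase)}/\Bbase)\times\mathbb{A}^1$; base-changing the projective morphism $\ov{h_{Hod}(C/\Bbase)}$ along the open immersion then shows $h_{Hod}(C/\Bbase)$ itself is projective. The density of the open immersions, the affineness of $pr$, and the projectivity of $\ov{pr}$ are read off from the construction and from the properness of $\ov{A(C^{(\Bbase)}/\Bbase)}$.

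For part (3) I would compute $U'/\GB$ directly. With $Z'=A(C^{(\Bbase)}/\Bbase)\times\mathbb{A}^1_{\lambda}\times S$ carrying weight $ip$ on the factor $\mathbb{A}'_{i}$, weight $1$ on $\mathbb{A}^1_{\lambda}$, and weight $0$ on $S$, the $\infty$-limit exists only along $\{0\}\times\{0\}\times S$, so $U'=\bigl((A(C^{(\Bbase)}/\Bbase)\times\mathbb{A}^1_{\lambda})\setminus\{0\}\bigr)\times S$ and $U'/\GB=\mathbb{P}\times S$, where $\mathbb{P}$ is the weighted projective space attached to the graded ring with $\mathbb{A}^1_{\lambda}$ in degree $1$ and the coordinates of $\mathbb{A}'_{i}$ in degree $ip$, i.e. $\mathbb{P}(1,1\cdot p,2p,\ldots,rp)$. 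The asserted identification $\mathbb{P}(1,p,2p,\ldots,rp)\cong\mathbb{P}(1,1,2,\ldots,r)$ is the standard well-formedness reduction: the weights $p,2p,\ldots,rp$ have greatest common divisor $p$, coprime to the remaining weight $1$, and dividing them by $p$ leaves the $\mathrm{Proj}$ unchanged. The hard part of the whole argument is hypothesis (b): the properness over $\Bbase$ of the $\GB$-fixed locus is not formal and reduces exactly to the properness of the nilpotent cone $N_{Dol}$, which I would import from the known properness of the Hitchin morphism $h_{Dol}$ (see \ci[Prop. 5.1]{la-pa}, \ci[Th. 5.1]{la-2014}) rather than reprove; the description of $N_{Dol}$ as simultaneously the $\infty$-limit locus and the zero locus $\{h_A=0\}$, used for (e), is the other place where the characteristic-$p$ geometry, rather than the formalism of \ref{proj tm}, does the work.
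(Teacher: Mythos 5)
Your proposal follows the paper's own proof essentially verbatim: your $Z = M_{Hod}\times_{\mathbb A^1}(\mathbb A^1_\lambda\times\mathbb A^1_\sigma)$ and $Z' = A'\times\mathbb A^1_\lambda\times\mathbb A^1_\sigma$ over $S=\mathbb A^1_\sigma$ are exactly the paper's diagram (\ref{act hh}) with $(x,y)$ renamed $(\sigma,\lambda)$, and your verification of hypotheses (a)--(e) of Theorem \ref{proj tm}, the identification of $U'/\GB$ with $\mathbb P(1,1\cdot p,2p,\dots,rp)\times S$, and the deduction of the projectivity of $h_{Hod}$ from the Cartesian top square all match the paper's argument. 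The only imprecision is cosmetic: the fibre of $Z\to S$ over $\sigma=0$ and the $\GB$-fixed locus should be described via $M_{Hod,0_{\mathbb A^1}}$ rather than $M_{Dol}$ (the natural map between them is only known to be bijective on geometric points), which is how the paper phrases the properness argument in the proof of Theorem \ref{comphodj}.
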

The proofs of Theorems \ref{comphodj} and \ref{cpt tm hod} are postponed to \S\ref{bnbn}.

\begin{rmk}\la{wz}
For the stated  equality of weighted projective spaces, i.e.  keep the first $1$ and replace  $ip$ by $i$ for $i=1,\ldots, r$, see \ci[Prop. 1.3]{delorme}
and \ci[\S1.3, Proposition]{dol}. This should not be confused with the  fact that, when dealing with weighted projective spaces,
we can replace the vector of weights by a positive  integer multiple of it.
\end{rmk}

\begin{rmk}\la{rmk ala}
A. Langer 's \ci[statement at the top of p. 531 and Th. 5.1]{la-2014} implies  that $h_{Hod}$ is proper.
On the other hand, in order to have a complete proof of loc.cit.,  one  also needs to prove that the morphism
from the moduli of semistable bundles with $t$-connections to the appropriate moduli space of semistable
Higgs bundles (with Higgs field then given by the $p$-curvature of the $t$-connection), is proper.
A. Langer has very kindly provided us with a proof of this fact in a private communication.
Added in revision: the paper \ci{La-2021} provides complete details and proves an even stronger statement.
\end{rmk}

By taking the fiber over $1_{\Bbase} \in \mathbb A^1_{\Bbase}$ of (\ref{eq001}) and (\ref{eq01}), and by observing that the fiber of $\ov{\tau}$ over the same value is Simpson's compactification $\ov{M_{dR}}$, we immediately deduce the following Theorems \ref{Jcpt tm dr} and \ref{cpt tm dr}:

\begin{tm}[\bf Projective Completion of $M_{dR}$]
\label{Jcpt tm dr}
Let the smooth curve $C/\Bbase/J$ be as in (\ref{smcurve}).
There exists a projective $\Bbase$-scheme $\ov{M_{dR}(C/\Bbase)}$ and an open immersion of $\Bbase$-schemes
$M_{dR}(C/\Bbase)\hra \ov{M_{dR}(C/\Bbase)}$ with dense image.
\end{tm}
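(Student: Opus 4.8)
The plan is to produce $\ov{M_{dR}(C/\Bbase)}$ as a single fiber of the projective completion already constructed in Theorem \ref{comphodj}, and to read off all the stated properties by base change. Concretely, I would take the fiber over the $1$-section $1_{\Bbase}: \Bbase \hra \mathbb A^1_{\Bbase}$ of the diagram (\ref{eq001}), so that the substantive content is entirely delegated to Theorem \ref{comphodj} and what remains is a formal specialization.

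First I would record the moduli-theoretic identification: since a flat connection is by definition a $t$-connection with $t=1$, the de Rham moduli space is canonically the fiber of $\tau_{Hod}(C/\Bbase)$ over $1_{\Bbase}$, i.e.
\[
M_{dR}(C/\Bbase)\;=\;M_{Hod}(C/\Bbase)\times_{\mathbb A^1_{\Bbase}} 1_{\Bbase}.
\]
I would then \emph{define} $\ov{M_{dR}(C/\Bbase)}$ to be the fiber of $\ov{\tau_{Hod}(C/\Bbase)}$ over the same section. Projectivity of $\ov{M_{dR}(C/\Bbase)}$ over $\Bbase$ is then immediate: $\ov{\tau_{Hod}(C/\Bbase)}$ is projective by Theorem \ref{comphodj}(2), and projective morphisms are stable under the base change along $1_{\Bbase}\cong \Bbase$. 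Likewise, the open immersion $M_{Hod}(C/\Bbase)\hra \ov{M_{Hod}(C/\Bbase)}$ of Theorem \ref{comphodj}(1) is stable under base change, producing the desired open immersion $M_{dR}(C/\Bbase)\hra \ov{M_{dR}(C/\Bbase)}$; and the density of the image follows at once from the clause in Theorem \ref{comphodj}(1) asserting density in every fiber of $\ov{\tau_{Hod}(C/\Bbase)}$, applied to the fiber over $1_{\Bbase}$.

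I expect no genuine obstacle here, as all the real work is carried out in Theorem \ref{comphodj}. The only points deserving a line of justification are the functorial identification of $M_{dR}$ with the $t=1$ fiber of $\tau_{Hod}$ (and, as the surrounding text remarks, the matching of the resulting boundary with Simpson's compactification $\ov{M_{dR}}$ over $\comp$, which motivates the notation), together with the elementary observation that forming the fiber over $1_{\Bbase}$ commutes with restricting to the open subscheme $M_{Hod}(C/\Bbase)$, so that the ``dense in every fiber'' property transports verbatim to density of $M_{dR}(C/\Bbase)$ in $\ov{M_{dR}(C/\Bbase)}$.
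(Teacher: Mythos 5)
Your proposal is correct and coincides with the paper's own argument: the paper deduces Theorem \ref{Jcpt tm dr} precisely by taking the fiber of diagram (\ref{eq001}) over $1_{\Bbase}\in\mathbb A^1_{\Bbase}$, with projectivity and fiberwise density imported from Theorem \ref{comphodj} and the identification $M_{dR}(C/\Bbase)\cong M_{Hod}(C/\Bbase)\times_{\mathbb A^1_{\Bbase}}1_{\Bbase}$ supplied by Lemma \ref{dreq}. Nothing further is needed.
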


\begin{tm}[\bf Projectivity of the de Rham-Hitchin morphism]\la{cpt tm dr} $\;$
In the setup in Theorem \ref{Jcpt tm dr}, if we further require that $J$ is a field of characteristic $p>0$, then
we have the following Cartesian diagram
\beq\la{eq4}
\xymatrix{
M_{dR}(C/\Bbase)\ar[d]_-{h_{dR}=h_{Hod,1_{\mathbb A^1}}}  \ar@{^{(}->}[r]  
& \ov{M_{dR}(C/\Bbase)} \ar[d]^-{\ov{h_{Hod,1_{\mathbb A^1}}}}
\\
{A(C^{(\Bbase)}/\Bbase})  \ar@{^{(}->}[r]  & \ov{{A(C^{(\Bbase)}/\Bbase)}},
}
\eeq
where 
\ben
\item
The horizontal arrows are open embeddings with dense image;
\item
The morphisms $h_{dR}=h_{Hod,1_{\mathbb A^1}}$ and $\ov{h_{Hod,1_{\mathbb A^1}}}$ are projective;
\item
$\ov{A(C^{(B)}/\Bbase)}$ is the weighted projective space in Theorem \ref{cpt tm hod}.(3);

\item
The compactification  $\ov{M_{dR}}$ is projective.
\een
\end{tm}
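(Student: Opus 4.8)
The hypotheses of Theorem \ref{cpt tm dr} are exactly those of Theorem \ref{cpt tm hod} (a smooth curve $C/B/J$ with $J$ a field of characteristic $p>0$), so the diagram (\ref{eq01}) is at our disposal. The plan is to deduce Theorem \ref{cpt tm dr} as the fiber over the unit section $1_B\colon B\hookrightarrow\mathbb{A}^1_B$ of (\ref{eq01}). Concretely, I would apply the base change $(-)\times_{\mathbb{A}^1_B}1_B$ to the whole of (\ref{eq01}), viewing each of its objects over $\mathbb{A}^1_B$ through the vertical composites $\overline{\tau_{Hod}}=\overline{pr}\circ\overline{h_{Hod}}$ and $\tau_{Hod}=pr\circ h_{Hod}$.

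First I would identify the four corners of the resulting square. The top-left corner $M_{Hod}(C/B)\times_{\mathbb{A}^1_B}1_B$ is canonically $M_{dR}(C/B)$ by Lemma \ref{dreq}; the top-right corner $\overline{M_{Hod}(C/B)}\times_{\mathbb{A}^1_B}1_B$ is Simpson's compactification $\overline{M_{dR}(C/B)}$ by construction, this being exactly the identification used to pass from Theorem \ref{comphodj} to Theorem \ref{Jcpt tm dr}. Since $(\bullet\times\mathbb{A}^1_B)\times_{\mathbb{A}^1_B}1_B=\bullet$, the bottom two corners become $A(C^{(B)}/B)$ and $\overline{A(C^{(B)}/B)}$. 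Under these identifications the two vertical maps restrict to $h_{dR}=h_{Hod,1_{\mathbb{A}^1}}$ and $\overline{h_{Hod,1_{\mathbb{A}^1}}}$, where the identification of the left map with $h_{dR}$ uses the $\GB$-equivariant trivialization of Lemma \ref{dreq}.(2); this produces the diagram (\ref{eq4}).

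Next I would read off the four asserted properties directly from Theorem \ref{cpt tm hod}. Because fiber products commute with base change, the Cartesian top square of (\ref{eq01}) stays Cartesian after restriction to $1_B$, giving the Cartesianness of (\ref{eq4}). For part (1), open immersions are stable under base change, so the horizontal arrows remain open immersions, and their density follows from the density in every fiber of $\overline{\tau_{Hod}}$ and $\overline{h_{Hod}}$ asserted in Theorem \ref{cpt tm hod}.(1), the fiber over $1_B$ being one such fiber; for the bottom row one may alternatively use that $A(C^{(B)}/B)$ is the standard dense open of its weighted projective completion. For part (2), projective morphisms are stable under base change, so the restrictions $h_{dR}$ and $\overline{h_{Hod,1_{\mathbb{A}^1}}}$ of the projective morphisms $h_{Hod}$ and $\overline{h_{Hod}}$ of Theorem \ref{cpt tm hod}.(2) are projective. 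Part (3) is immediate, as $\overline{A(C^{(B)}/B)}$ is literally the weighted projective space of Theorem \ref{cpt tm hod}.(3). For part (4), $\overline{M_{dR}(C/B)}$ is projective over $B$ by Theorem \ref{Jcpt tm dr}; alternatively, within the present diagram, it is the source of the projective morphism $\overline{h_{Hod,1_{\mathbb{A}^1}}}$ onto the weighted projective space $\overline{A(C^{(B)}/B)}$, itself projective over the noetherian base $B$, so the composite is projective.

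In summary, this statement is a formal corollary of Theorem \ref{cpt tm hod}, and I expect no genuinely new difficulty. The only steps demanding care are bookkeeping: confirming the canonical identifications of the two top corners with $M_{dR}$ and $\overline{M_{dR}}$, which rest on Lemma \ref{dreq} and on the definition of Simpson's de Rham compactification as the $t=1$ fiber of $\overline{M_{Hod}}$, and checking that the phrase ``dense in every fiber'' in Theorem \ref{cpt tm hod}.(1) specializes to the density of $M_{dR}\hookrightarrow\overline{M_{dR}}$ over the single section $1_B$. Since all the substantive work was already carried out in establishing Theorem \ref{cpt tm hod}, these verifications should be routine.
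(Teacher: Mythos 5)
Your proposal is correct and follows exactly the paper's route: the paper deduces Theorem \ref{cpt tm dr} precisely by taking the fiber over $1_{\Bbase}\in\mathbb A^1_{\Bbase}$ of diagram (\ref{eq01}), identifying the top corners via Lemma \ref{dreq} and the observation that the fiber of $\ov{\tau}$ over $1_{\Bbase}$ is Simpson's compactification $\ov{M_{dR}}$, and reading off the assertions from the base-change stability of open immersions and projective morphisms. Your additional bookkeeping only makes explicit what the paper leaves as "immediate."
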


In the Dolbeault case, we do not know whether the natural $\GB$-equivariant morphism $M_{Dol} \to M_{Hod,0_{\mathbb A^1}}$
is an isomorphism. 
On the other-hand, we use the $\GB$-action to obtain  projective $\GB$-equivariant completions of $M_{Dol}$
and of $M_{Hod,0_{\mathbb A^1}}$
which are suitably compatible with the natural $\GB$-equivariant morphism $M_{Dol} \to M_{Hod,0_{\mathbb A^1}}$. Note that we use the subscripts to denote the fibers: for example, the fiber of $h_{Hod}$ over $0_{\mathbb{A}^1}$ is denoted by $M_{Hod, 0_{\mathbb{A}^1}}$.

\begin{tm}[\bf Projective Completion of the Dolbeault Moduli Space]
\label{Jcpt tm dol}
Let the smooth curve $C/\Bbase/J$ be as in (\ref{smcurve}).
We have the following commutative $\GB$-equivariant diagram
\begin{equation}
\label{ahmm}
    \xymatrix{
    A(C/\Bbase)\ar[d]&&
    M_{Dol}(C/\Bbase) \ar[r] \ar[d] \ar[ll]_-{h_{Dol}}&
    M_{Hod,0_{\mathbb A^1}}(C/\Bbase) \ar[d]\\
    \ov{A(C/\Bbase)}&&
    \ov{M_{Dol}(C/\Bbase)} \ar[r] \ar[ll]_{\ov{h_{Dol}}}&
    \ov{M_{Hod,0_{\mathbb A^1}}(C/\Bbase)},
    }
\end{equation}
where:
\ben
\item
All the $\Bbase$-schemes in the bottom row of (\ref{ahmm}) are projective;
\item
All the vertical arrows are open immersions with dense image;
\item
All the horizontal arrows in (\ref{ahmm}) are projective;
\item
The two horizontal arrows in the right half of (\ref{ahmm}) are bijective on geometric points and, if degree and rank are coprime, then they are isomorphisms.
\item
The morphism $\ov{h_{Dol}}:\ov{M_{Dol}} \to \ov{A}$ is naturally isomorphic to  the compactification constructed in \ci[Thm.\;3.1.1, which uses Thms.\;3.2.1 and 3.2.2]{de cpt} (loc.\;cit.\;works over the complex numbers, but in view of the compactification and projectivity results of this paper, the construction
and results hold  in arbitrary characteristic as well).
\een
\end{tm}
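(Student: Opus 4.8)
The plan is to build the bottom row of \eqref{ahmm} by two applications of the Projectivity Theorem III \ref{proj tm}, following verbatim the recipe of \ci[Thm.\;3.1.1]{de cpt}: one compactifies a quasi-projective $\GB$-variety $M$ with the prescribed limit behaviour by adjoining a single weight-one coordinate and passing to the geometric quotient. Concretely, for the left half I would set $S=\Bbase$, take $Z:=M_{Dol}(C/\Bbase)\times_\Bbase\mathbb A^1_\Bbase$ and $Z':=A(C/\Bbase)\times_\Bbase\mathbb A^1_\Bbase$, let $\GB$ act by the Higgs scaling (resp. the Hitchin dilation of weights $1,\dots,r$) on the first factor and with weight one on the new $\mathbb A^1_\Bbase$, and let $Z\to Z'$ be $h_{Dol}\times\mathrm{id}$. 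Then $M_{Dol}$ is recovered as the open locus $\{\lambda\neq0\}/\GB$ via $(m,\lambda)\mapsto\lambda^{-1}\cdot m$, and likewise $A(C/\Bbase)=\{\lambda\neq0\}/\GB$ inside $U'/\GB$.

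Next I would verify the hypotheses of Theorem \ref{proj tm}. Hypothesis (a) is clear from the quasi-projectivity of $M_{Dol}$ (\ci[Thm.\;1.1]{la-2014}) and the local freeness of $A$. For (b), a $\GB$-fixed point of $Z$ has $\lambda=0$ and fixed first coordinate; a fixed Higgs bundle has vanishing higher characteristic coefficients, hence lies over $0\in A$, i.e. in the Hitchin fibre $h_{Dol}^{-1}(0)$, which is proper over $\Bbase$ because $h_{Dol}$ is proper, so the fixed locus is proper. Hypothesis (c) is the existence of the associated-graded degeneration $\lim_{t\to0}t\cdot(E,\phi)=(E,0)$ inside the moduli space. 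Hypothesis (d) holds in the strong form (d.i), since $h_{Dol}$ is surjective, and also directly as (d.ii), since the only fixed point of $Z'$ is the origin and $0$-limits obviously exist. Finally (e) is automatic, because $h_{Dol}$, hence $h_{Dol}\times\mathrm{id}$, is proper. Theorem \ref{proj tm} then produces the projective morphism $\ov{h_{Dol}}:\ov{M_{Dol}}:=U/\GB\to\ov{A}:=U'/\GB$; here $\ov{A}=(A\times_\Bbase\mathbb A^1_\Bbase\setminus 0)/\GB$ is precisely the $\Proj$ of the relevant graded $\mathcal O_\Bbase$-algebra, i.e. a weighted projective bundle, hence projective over $\Bbase$, so by (3.b) the space $\ov{M_{Dol}}$ is projective over $\Bbase$ as well. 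Density of the open immersions follows because $\{\lambda\neq0\}$ is dense in $Z$ while $U$ is open, giving (1), (2) and the left column of (3).

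For the right half I would repeat the construction with $Z'$ replaced by $M_{Hod,0_{\mathbb A^1}}(C/\Bbase)\times_\Bbase\mathbb A^1_\Bbase$ and $Z\to Z'$ induced by the natural $\GB$-equivariant morphism $M_{Dol}\to M_{Hod,0_{\mathbb A^1}}$. Since this morphism is bijective on geometric points it is surjective, so (d.i) is met, while (b) and (c) are as above, as they involve only $Z$; I would check (e) by inspection, using that a $\GB$-equivariant universal homeomorphism matches the loci where $\infty$-limits fail to exist. The resulting target $\ov{M_{Hod,0_{\mathbb A^1}}}$ would then be identified with the fibre over $0_\Bbase$ of $\ov{\tau_{Hod}}:\ov{M_{Hod}}\to\mathbb A^1_\Bbase$ of Theorem \ref{comphodj}, which is projective over $\Bbase$ as a fibre of a projective morphism. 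The descended $\ov{M_{Dol}}\to\ov{M_{Hod,0_{\mathbb A^1}}}$ is projective by (3.a), and it is bijective on geometric points because it is so on the dense open part and on the boundary; the non-compactified horizontal arrows are projective since $h_{Dol}$ is proper with relatively ample bundle and $M_{Dol}\to M_{Hod,0_{\mathbb A^1}}$ is a finite universal homeomorphism, completing (3). When $(r,d)$ are coprime, stability equals semistability and $M_{Dol}\to M_{Hod,0_{\mathbb A^1}}$ is an isomorphism (Remark \ref{jj}), whence so is its compactification, giving (4). Statement (5) is then immediate: the left-half data $Z\to Z'$ is the identical set-up of \ci[Thms.\;3.2.1,\;3.2.2]{de cpt}, and Theorem \ref{proj tm} is its arbitrary-characteristic counterpart, so the two constructions of $\ov{h_{Dol}}$ coincide.

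The main obstacle I anticipate is concentrated in hypotheses (b), (c) and in the right-half identification. Properness of the fixed locus and existence of $0$-limits must be secured over the general base $\Bbase/J$ and in positive or mixed characteristic: I would reduce (b) to properness of the Hitchin fibre $h_{Dol}^{-1}(0)$, hence to properness of $h_{Dol}$ via \ci[Th.\;5.1]{la-2014} and \ci{La-2021}, and (c) to the semistable associated-graded degeneration, again available from Langer's theory. The more delicate coherence point is showing that the compactification of $M_{Hod,0_{\mathbb A^1}}$ produced by the right-half application agrees with the fibre over $0_\Bbase$ of $\ov{M_{Hod}}$: since the geometric quotients are only uniform (they commute with flat base change) and $0_\Bbase\hookrightarrow\mathbb A^1_\Bbase$ is not flat, this compatibility must be argued through the $\GB$-equivariant universal property of the quotient rather than by naive base change, and this is where I would expend most of the care.
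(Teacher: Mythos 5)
Your proposal is correct and follows essentially the same route as the paper: two applications of the Projectivity Theorem III \ref{proj tm}, one to $h_{Dol}$ and one to the morphism induced by $M_{Dol}\to M_{Hod,0_{\mathbb A^1}}$, after adjoining an auxiliary weight-one coordinate, followed by the identification with \ci[Thm.\;3.1.1]{de cpt}. The differences are organizational. You work over $S=\Bbase$ with $Z=M_{Dol}\times_{\Bbase}\mathbb A^1_{\Bbase}$, while the paper runs everything through variations of diagram (\ref{act}) over $S=\mathbb A^1_x$; your simplification is legitimate because the $\GB$-action on $\Bbase$ is already trivial (the $\mathbb A^2_{x,y}$ device is only needed in the Hodge case to neutralize the action on $\mathbb A^1_\lambda$), but the paper's detour is what produces, by construction, the identification of $\ov{M_{Hod,0_{\mathbb A^1}}}$ with the fiber of $\ov{\tau_{Hod}}$ over $0$ --- precisely the non-flat base-change compatibility you rightly flag as the delicate point. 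Two verifications should be tightened. First, $\lim_{t\to 0}(E,t\phi)$ is not $(E,0)$: that object may be unstable, and the existence of the $0$-limit in the moduli space is exactly Langer's Langton-type result \ci[Th.\;5.1]{la-2014}, as you note only afterwards. Second, the paper verifies hypotheses (b) and (e) using only the properness of the single fiber $h_{Dol}^{-1}(0_A)$ (via \ci[Thm.\;I.3]{fa}) and an explicit identification of $U$ with the preimage of $U'$, rather than the properness of all of $h_{Dol}$; your route is not circular, since that properness is known independently, but it partly presupposes the projectivity of $h_{Dol}$ asserted in item (3).
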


When the base ring $J$ is a field of positive characterisic, the Hodge-Hitchin morphism exists, and we can slightly improve Theorem \ref{Jcpt tm dol} as following:

\begin{tm}[\bf Projectivity of the  Hitchin morphism]\la{cpt tm dol} $\;$
In the setup in Theorem \ref{Jcpt tm dol}, if we further require that $J$ is a field of characteristic $p>0$, then
we have the following $\GB$-equivariant commutative diagram:
\begin{equation}
    \label{eq02}
    \xymatrix{
    \ov{M_{Dol}(C/\Bbase)} \ar[rrr] \ar[ddd]_-{\ov{h_{Dol}}}&&&
    \ov{M_{Hod,0_{\mathbb A^1}}(C/\Bbase)} \ar[ddd]^-{\ov{h_{Hod,0_{\mathbb A^1}}}}\\
    & M_{Dol}(C/\Bbase) \ar[r] \ar[d]_-{h_{Dol}} \ar[ul] & M_{Hod,0_{\mathbb A^1}}(C/\Bbase) \ar[ur]\ar[d]^-{h_{Hod,0_{\mathbb A^1}}}&
\\
&A(C/\Bbase) \ar[dl] \ar[r]^-{Fr_A} & {A(C^{(\Bbase)}/\Bbase}) \ar[dr]\\
\ov{A(C/\Bbase)} \ar[rrr]^-{Fr_{\ov{A}}} &&& \ov{{A(C^{(\Bbase)}/\Bbase)}},
    }
\end{equation}

where
\ben
\item
All the oblique arrows in (\ref{eq02}) are open immersions with dense image;
\item
All vertical and horizontal arrows in (\ref{eq02}) are projective morphisms;
\item
The top two horizontal arrows satisfy the properties in Theorem \ref{Jcpt tm dol}.(4);
\item
If we further require that $J$ is algebraically closed, then the top two arrows are universal homeomorphisms;
\item 
$\ov{A}$ is the   weighted projective space ${\mathbb P}(1,1,2,\ldots, r)$ associated with the $\GB$-variety
$\mathbb A^1\times  \prod_{i=1}^r \mathbb A'_{i}$, where $\GB$ acts as standard dilations
of  weight $1$ on  $\mathbb A^1$  and of weight $i$ on the remaining factors and, as the notation indicates, the morphism 
$Fr_{\ov{A}}$ is the relative Frobenius morphism for the $\Bbase$-scheme $\ov{A}.$
\een
\end{tm}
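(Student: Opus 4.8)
The plan is to assemble the cube (\ref{eq02}) out of two vertical faces that are already under control and to glue them along the relative Frobenius on the Hitchin bases, harvesting every projectivity assertion from the Projectivity Theorem III \ref{proj tm}, exactly as in the proofs of Theorems \ref{comphodj}, \ref{cpt tm hod} and \ref{Jcpt tm dol}. The right-hand vertical face will come from restricting the Hodge-Hitchin completion of Theorem \ref{cpt tm hod} to the fiber over $0_{\mathbb A^1}$, the left-hand vertical face will be the Dolbeault completion of Theorem \ref{Jcpt tm dol}, and the horizontal direction will record the comparison morphism $M_{Dol}\to M_{Hod,0_{\mathbb A^1}}$ upstairs and the relative Frobenius on the bases downstairs.

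First I would restrict the whole of diagram (\ref{eq01}) to the fiber over $0_{\mathbb A^1}\subset \mathbb A^1_{\Bbase}$. Since by Theorem \ref{cpt tm hod}.(1) that diagram has Cartesian top square and fiberwise-dense open immersions, the fiber over $0$ of $\ov{h_{Hod}(C/\Bbase)}$ is a projective completion $\ov{h_{Hod,0_{\mathbb A^1}}}\colon \ov{M_{Hod,0_{\mathbb A^1}}}\to \ov{A(C^{(\Bbase)}/\Bbase)}$ of $h_{Hod,0_{\mathbb A^1}}$ (the $\mathbb A^1$-factor of the target collapsing to the point $0$), and it is projective by Theorem \ref{cpt tm hod}.(2); this is the right-hand vertical face. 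The left-hand vertical face is Theorem \ref{Jcpt tm dol} verbatim: $\ov{h_{Dol}}\colon \ov{M_{Dol}}\to \ov{A(C/\Bbase)}$, projective, with $\ov{A(C/\Bbase)}$ the weighted projective space of weights $(1,1,2,\ldots,r)$. For the bottom edge of the back face I take $Fr_{\ov A}$ to be the relative Frobenius of $\ov{A(C/\Bbase)}$ over $\Bbase$; using the identification $A(C/\Bbase)^{(\Bbase)}\cong A(C^{(\Bbase)}/\Bbase)$ of Lemma \ref{lemma frfr} together with the factorization (\ref{hfactor}), one checks that $Fr_{\ov A}$ restricts on the dense open $A(C/\Bbase)$ to $Fr_A=Fr_{A(C/\Bbase)/\Bbase}$, so that the front and back bottom edges agree.

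It then remains to produce the top comparison arrow $\ov{M_{Dol}}\to \ov{M_{Hod,0_{\mathbb A^1}}}$ extending $M_{Dol}\to M_{Hod,0_{\mathbb A^1}}$ and to check that the cube commutes. The cleanest route is to realize both completions simultaneously through the Projectivity Theorem III \ref{proj tm}: one feeds in the $\GB$-equivariant comparison morphism between the ambient $\GB$-schemes underlying $M_{Dol}$ and $M_{Hod,0_{\mathbb A^1}}$ and lets \ref{proj tm}.(2) descend it to the geometric quotients, which are precisely these two completions, obtaining the morphism and its projectivity at once. Since $\ov{M_{Dol}}$ and $\ov{M_{Hod,0_{\mathbb A^1}}}$ are projective over $\Bbase$ by Theorem \ref{Jcpt tm dol}.(1), projectivity of the top edge is in any case automatic; the vertical arrows are projective by the preceding paragraph, and the Frobenius edges are finite, hence projective, settling item (2). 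The oblique open immersions with dense image and item (3) are read off from Theorems \ref{cpt tm hod} and \ref{Jcpt tm dol}, the coprime isomorphism of \ref{Jcpt tm dol}.(4) persisting to the completions because the two schemes are the closures of isomorphic dense opens inside separated projective $\Bbase$-schemes. Commutativity of the cube I would verify on the dense interior, where it is the content of (\ref{hfactor}) and of the naturality of the $\GB$-quotient construction, and then extend by density and separatedness.

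The genuinely delicate point is item (4): upgrading ``bijective on geometric points'' to ``universal homeomorphism'' when $J$ is algebraically closed. Here I would argue that the two top arrows, being projective, are universally closed, and that they are radicial, their fibers being single points with purely inseparable residue extensions as a reflection of the relative Frobenius entering (\ref{hfactor}); a universally closed, universally injective, surjective morphism is a universal homeomorphism. The main obstacle I expect is exactly the verification of the radicial property on the boundary, rather than merely bijectivity on geometric points of the interior: one must show that the boundary extension of $M_{Dol}\to M_{Hod,0_{\mathbb A^1}}$ remains purely inseparable, and this is where compatibility with $Fr_{\ov A}$ and a careful analysis of the boundary orbits must be used.
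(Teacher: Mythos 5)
Your overall architecture is the same as the paper's: the authors also build the cube by augmenting the basic construction (\ref{act}) with the $\GB$-equivariant square relating $M_{Dol}\to M_{Hod}$ over $Fr_A\times \mathrm{Id}_{\mathbb A^1}:A\times_B\mathbb A^1\to A'\times_B\mathbb A^1$ (their diagram (\ref{mm11})), and then invoke the ``multiple morphisms'' version of Theorems \ref{ugm proper 2} and \ref{proj tm} to descend everything to the quotients, exactly as in \S\ref{bnbn}; items (1)--(3) and (5) come out as you describe, with the identification of the bottom edge with $Fr_{\ov A}$ obtained, as in your sketch, by matching the constructed map $\mathbb P(1,1,2,\ldots,r)\to\mathbb P(1,p,2p,\ldots,rp)=\mathbb P(1,1,2,\ldots,r)$ against the relative Frobenius on the dense open $A$ and using separatedness. (They also observe the shortcut that projectivity of $M_{Dol}\to M_{Hod,0_{\mathbb A^1}}$ is automatic from the factorization $M_{Dol}\to M_{Hod,0_{\mathbb A^1}}\to A'$ and the projectivity of $M_{Dol}\to A\to A'$.)

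The genuine gap is item (4), which you explicitly leave open (``the main obstacle I expect is exactly the verification of the radicial property''). Two remarks. First, the difficulty is not specifically a boundary phenomenon: bijectivity on \emph{geometric} points does not by itself give the radicial property at non-closed Zariski points, so the issue is already present on the interior $M_{Dol}\to M_{Hod,0_{\mathbb A^1}}$, and the same argument then applies verbatim to the compactified arrow since it too is a universally closed bijection on geometric points. Second, the missing step has a concrete resolution that does not require any analysis of boundary orbits or of $Fr_{\ov A}$: given a Zariski point $x$ with image $c(x)$, pass to the closures $\ov{x}\to\ov{c(x)}$, shrink so that both are normal and the map is finite, and let $\tilde x$ be the normalization in the separable closure of $\kappa(c(x))$ inside $\kappa(x)$; then $\tilde x\to\ov{c(x)}$ is finite, generically \'etale and injective, and since $J$ is algebraically closed (this is the only place the hypothesis is used) such a map is an isomorphism over a dense open, forcing $\kappa(\tilde x)=\kappa(c(x))$ and hence $\kappa(x)\supseteq\kappa(c(x))$ purely inseparable. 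Combined with universal closedness this yields the universal homeomorphism. Without some argument of this kind your item (4) is unproved.
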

The proofs of  Theorems \ref{Jcpt tm dol} and \ref{cpt tm dol} are postponed to \S\ref{qnbn}.

\begin{rmk}\la{rmk fe-ma} 
We have borrowed the construction of $\ov{\tau}$ in (\ref{eq01})  from 
\ci[Thm. 3.2]{fe-ma}, where it is proved, over the complex numbers, that Simpson's compactification of the Dolbeault moduli space is projective. 
\end{rmk}

\section{Proof of the Compactification Theorem I \ref{ugm proper}}\label{proof ugm proper}


The purpose of this section is to prove the Compactification Theorem I \ref{ugm proper}.  
In \S\ref{prep lemmata}, we prove some well known lemmata that are used in later sections.
In \S\ref{section on Z}, we construct an object $Z(r)$ that is used in the proofs of all the items of Theorem \ref{ugm proper}.
In \S\ref{closed Y}, we prove item (1) of Theorem \ref{ugm proper} which states that $Y_+$ and $Y_-$ are closed inside $X$ and that the uniform geometric quotient $U/\GB$ exists.
In \S\ref{quotient ex}, we prove the item (2) of Theorem \ref{ugm proper} which states that $U/\GB$ exists as a uniform geometric quotient.
In \S\ref{proof of uni cl}, we prove the item (3) of Theorem \ref{ugm proper} which states that $U/\GB\ra S$ is universally closed.
In \S\ref{proof of sep}, we prove the item (4) of Theorem \ref{ugm proper} which states that $U/\GB\ra S$ is separated.

\subsection{Some Preparatory Lemmata}\label{prep lemmata}$\;$

In this subsection, we prove Lemmata \ref{valuative uni cl}, \ref{group on blow up}, and \ref{group on normalization}. 
While they are all well-known and quite general,  we could not find formal 
references for  the exact statements  that we need in the subsequent sections.

We use the following version of valuative criterion in the proof of universal closedness, which follows from \cite[Ex.II.4.11.(b)]{hartshorne} and the proof of \cite[\href{https://stacks.math.columbia.edu/tag/03K8}{03K8}]{stacks}:

\begin{lemma}[\bf Valuative Criterion for Universal Closedness]\label{valuative uni cl}
Let $f:X\ra S$ be a morphism of finite type between noetherian schemes.
Then $f$ is universally closed iff given any DVR $R_0$ inside its fraction field $K$ and a commutative diagram
\begin{equation}
\xymatrix{
\Spec(K) \ar[r] \ar[d] & X \ar[d]^-{f} \\
\Spec(R_0) \ar[r] & Y,
}
\end{equation}
we can find a field extension $L\supset K$, a valuation ring $R$ inside $L$ dominating $R_0$, and a morphism $\Spec(R)\ra X$, making the following diagram commutative:
\begin{equation}
\xymatrix{
\Spec(L) \ar[r] \ar[d] & \Spec(K) \ar[r] \ar[d] & X \ar[d]^-{f}\\
\Spec(R) \ar[r]\ar[rru] & \Spec(R_0) \ar[r] & Y.
}
\end{equation}
\end{lemma}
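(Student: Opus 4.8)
The plan is to deduce the stated criterion from the general valuative criterion for universal closedness (the existence part, exactly as established in the proof of \cite[\href{https://stacks.math.columbia.edu/tag/03K8}{03K8}]{stacks}), combined with the reduction from arbitrary valuation rings to discrete valuation rings that is available in the noetherian setting \cite[Ex.~II.4.11.(b)]{hartshorne}. Recall that the general criterion asserts that a quasi-compact morphism $f$ is universally closed if and only if, for every valuation ring $A$ with fraction field $K$ and every commutative square with top arrow $\Spec(K)\ra X$ and bottom arrow $\Spec(A)\ra Y$, there exist a field extension $L\supset K$, a valuation ring $R\subset L$ dominating $A$, and a morphism $\Spec(R)\ra X$ making the enlarged diagram commute. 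Since our $f$ is of finite type between noetherian schemes it is in particular quasi-compact, so this general criterion applies.

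The ``only if'' direction is then immediate: a discrete valuation ring $R_0$ is in particular a valuation ring, so if $f$ is universally closed, the general criterion applied to $R_0$ produces precisely the extension $L\supset K$, the dominating valuation ring $R$, and the lift $\Spec(R)\ra X$ demanded in the statement.

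For the ``if'' direction I would show that the displayed condition, although formulated only for discrete valuation rings, already forces the full valuation-ring criterion, and hence universal closedness. So let $A\subset K$ be an arbitrary valuation ring fitting into a commutative square as above, and let $\eta$ and $s$ denote the images in $Y$ of the generic and closed points of $\Spec(A)$, so that $s$ lies in the closure of $\eta$. The substantive step is to dominate this specialization by a discrete valuation ring: applying \cite[Ex.~II.4.11.(b)]{hartshorne} to the noetherian local domain $\mathcal{O}_{\overline{\{\eta\}},s}$ (the local ring at $s$ of the reduced closure of $\eta$ in $Y$), for which a dominating discrete valuation ring exists by the Mori--Nagata theorem (its integral closure in $\kappa(\eta)$ is a Krull domain; localize at a height-one prime over the closed point), I obtain, after a suitable residue-field extension, a discrete valuation ring $R_0$ with a morphism $\Spec(R_0)\ra Y$ realizing the specialization $\eta\rightsquigarrow s$ and compatible with the given $\Spec(K)\ra X$ over the generic point. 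Feeding $R_0$ into the hypothesis yields a further extension and a dominating valuation ring $R$ together with a lift $\Spec(R)\ra X$, and chasing the resulting domination inclusions returns the lift required by the general criterion for the original $A$.

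I expect the main obstacle to be precisely this reduction in the ``if'' direction: one must manufacture from the arbitrary valuation ring $A$ a discrete valuation ring $R_0$ that both realizes the same specialization $\eta\rightsquigarrow s$ in $Y$ and stays compatible with the given map $\Spec(K)\ra X$, and then check that the lift obtained for $R_0$ transports back to a lift dominating $A$. The freedom to enlarge $K$ in the conclusion is exactly what absorbs the residue-field extensions needed to align these data; the attendant bookkeeping of field extensions and domination inclusions is routine, being entirely standard in the noetherian finite-type setting and supplied by the cited references.
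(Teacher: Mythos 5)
Your overall route coincides with the paper's: the lemma is deduced from the argument of \cite[\href{https://stacks.math.columbia.edu/tag/03K8}{03K8}]{stacks} together with the noetherian reduction to discrete valuation rings of \cite[Ex.~II.4.11]{hartshorne}; your ``only if'' direction and your construction (via Krull--Akizuki/Mori--Nagata) of a DVR dominating $\mathcal{O}_{\overline{\{\eta\}},s}$ are both fine. The gap is in the last step of your ``if'' direction. You propose to verify the full valuation-ring criterion for the \emph{original} valuation ring $A\subset K$ by feeding a manufactured DVR $R_0$ into the hypothesis and then ``chasing the resulting domination inclusions'' back to $A$. That chase does not exist: $R_0$, and the valuation ring $R$ dominating it that the hypothesis returns, live inside $\kappa(x)$ (or a finitely generated extension of it) and dominate the local ring $\mathcal{O}_{\overline{\{\eta\}},s}$, but they bear no domination relation to $A$ whatsoever --- two valuation rings of the same field dominating the same local subring are in general incomparable, and $A$ may have an arbitrary value group while everything you built is tied to a discrete one. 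Concretely, what the hypothesis hands you is a specialization $x\rightsquigarrow x''$ in $X$ with $f(x'')=s$; it does not hand you a valuation ring of an extension of $K$ dominating both $A$ and $\mathcal{O}_{\overline{\{x\}},x''}$ compatibly with $\Spec(A)\to Y$, which is what the general criterion for $A$ demands.

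The repair, and the reason the paper cites the \emph{proof} of 03K8 rather than its statement, is to bypass the general criterion altogether: for a quasi-compact morphism, universal closedness is equivalent to specializations lifting along $f\times\mathrm{id}_{\mathbb{A}^n}$ for every $n$ (Stacks 05JX), and these morphisms are again of finite type between noetherian schemes. Given $x\in X\times\mathbb{A}^n$ and a specialization $(f\times\mathrm{id})(x)\rightsquigarrow y'$, realize it by a DVR with fraction field $\kappa(x)$: first dominate the local ring at $y'$ of the reduced closure by a DVR of $\kappa((f\times\mathrm{id})(x))$, then extend to a DVR of $\kappa(x)$ along the finitely generated residue extension (Gauss valuations plus Krull--Akizuki) --- this is exactly where the noetherian and finite-type hypotheses enter. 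Now apply your DVR hypothesis for $f$ (project away the $\mathbb{A}^n$-factor in the diagram and carry it back along $\Spec(R)\to\Spec(R_0)$) and read the lifted specialization off the closed point of $\Spec(R)$. Once universal closedness is established this way, the general valuation-ring criterion for the original $A$ does hold, but as an output of the ``only if'' direction, not as an intermediate step of the ``if'' direction.
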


We need two Lemmata about lifting group actions along normalizations and blowing ups in the proof of Lemma \ref{eachz}.

\begin{lemma}[\bf Group actions on blowups]
\label{group on blow up}
Let $X$ be a locally noetherian scheme over a scheme $T$. 
Let $G$ be a group scheme over $T$ that is locally noetherian.
Let $\mu: G\times_T X\ra X$ be a $G$-action on $X$.
Let $Y$ be a $G$-invariant closed subscheme of $X$ with dense complement.
Let $\widetilde{X}$ be the blowing up of $X$ along $Y$.
Suppose that $G\times_T G\times_T \widetilde{X}$, $G\times_T \widetilde{X}$, and $\widetilde{X}$ are reduced, and that $X$ is separated.

Then the blowing up $\widetilde{X}$ of $X$ along $Y$ admits a $G$-action making the blow down morphism $\pi: \widetilde{X}\ra X$ to be $G$-equivariant.
\end{lemma}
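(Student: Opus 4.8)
The plan is to construct the lifted action $\widetilde{\mu}\colon G\times_T\widetilde{X}\to\widetilde{X}$ by means of the universal property of blowing up \cite[II.7.14]{hartshorne}, and then to verify the group-action axioms by a ``compare on a dense open'' argument that is made rigorous by the three reducedness hypotheses together with the separatedness of $X$.

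\textbf{Construction.} Write $\mathcal{I}$ for the ideal sheaf of $Y$ and $\pi\colon\widetilde{X}\to X$ for the blow-down, so that $E:=\pi^{-1}(Y)$ is an effective Cartier divisor and $\pi^{-1}\mathcal{I}\cdot\mathcal{O}_{\widetilde{X}}$ is invertible. Consider the composite $f:=\mu\circ(\mathrm{id}_G\times\pi)\colon G\times_T\widetilde{X}\to X$. Because $Y$ is $G$-invariant, the scheme-theoretic preimage $\mu^{-1}(Y)$ coincides with $G\times_T Y=\mathrm{pr}_X^{-1}(Y)$, i.e. $\mu^{-1}\mathcal{I}\cdot\mathcal{O}_{G\times_T X}=\mathrm{pr}_X^{-1}\mathcal{I}\cdot\mathcal{O}_{G\times_T X}$, where $\mathrm{pr}_X\colon G\times_T X\to X$ is the projection. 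Since $\mathrm{pr}_X\circ(\mathrm{id}_G\times\pi)=\pi\circ\mathrm{pr}_{\widetilde{X}}$, pulling back gives
\[
f^{-1}\mathcal{I}\cdot\mathcal{O}_{G\times_T\widetilde{X}}=\mathrm{pr}_{\widetilde{X}}^{*}\bigl(\pi^{-1}\mathcal{I}\cdot\mathcal{O}_{\widetilde{X}}\bigr),
\]
which is invertible, being the pullback of the invertible ideal of $E$. By the universal property of the blowup there is a unique morphism $\widetilde{\mu}\colon G\times_T\widetilde{X}\to\widetilde{X}$ with $\pi\circ\widetilde{\mu}=f=\mu\circ(\mathrm{id}_G\times\pi)$; this last identity is exactly the assertion that $\pi$ is $G$-equivariant for $\widetilde{\mu}$ and $\mu$. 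Note that the $G$-invariance of $Y$ is the only input used here, and no reducedness is needed for the construction itself.

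\textbf{Verification of the axioms.} It remains to check the unit and associativity axioms for $\widetilde{\mu}$. Set $X^{\circ}:=X\setminus Y$, which is $G$-invariant (as $Y$ is), and $\widetilde{X}^{\circ}:=\pi^{-1}(X^{\circ})$, on which $\pi$ restricts to an isomorphism $\widetilde{X}^{\circ}\xrightarrow{\sim}X^{\circ}$. Since $\widetilde{X}$ is reduced and $E$ is an effective Cartier divisor, $E$ contains no generic point of $\widetilde{X}$ (a local equation for $E$ is a nonzerodivisor, hence a unit, in the residue field at each generic point), so $\widetilde{X}^{\circ}$ is dense in $\widetilde{X}$. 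From $\pi\circ\widetilde{\mu}=\mu\circ(\mathrm{id}\times\pi)$ and the $G$-invariance of $X^{\circ}$ one gets $\widetilde{\mu}^{-1}(\widetilde{X}^{\circ})=G\times_T\widetilde{X}^{\circ}$, and under the isomorphism $\widetilde{X}^{\circ}\cong X^{\circ}$ the restriction of $\widetilde{\mu}$ is identified with the genuine action $\mu|_{G\times_T X^{\circ}}$. Consequently the two composites defining each axiom agree after restriction to the open loci $G\times_T\widetilde{X}^{\circ}$ and $G\times_T G\times_T\widetilde{X}^{\circ}$, since the corresponding identities hold for the honest action $\mu$ on $X^{\circ}$.

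To upgrade this to equality on the nose, observe that $\widetilde{X}$ is separated over $T$ (the blow-down $\pi$ is projective, hence separated, and $X/T$ is separated), so for any $T$-scheme $W$ the equalizer of two $T$-morphisms $W\to\widetilde{X}$ is a closed subscheme of $W$. Applying this with $W=\widetilde{X}$ for the unit axiom and $W=G\times_T G\times_T\widetilde{X}$ for associativity, the two morphisms in each axiom coincide on a closed subscheme that contains a dense open; as $W$ is reduced, this closed subscheme is all of $W$, and the axiom holds. This is precisely where the three reducedness hypotheses on $\widetilde{X}$, $G\times_T\widetilde{X}$, and $G\times_T G\times_T\widetilde{X}$ enter. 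I expect the main technical obstacle to be exactly the density input: one must know that $G\times_T\widetilde{X}^{\circ}$ and $G\times_T G\times_T\widetilde{X}^{\circ}$ are dense in their reduced ambient products. In the intended applications $G/T$ is flat (e.g. $G=\GB$), so the projections preserve density and the claim is immediate; in general one argues from the reducedness hypotheses together with the Cartier (hence nowhere-dense) nature of the pulled-back exceptional divisor.
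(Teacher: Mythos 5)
Your proposal is correct and follows essentially the same route as the paper: construct $\widetilde{\mu}$ via the universal property of the blowup (using the $G$-invariance of $Y$ to see that the pulled-back ideal is invertible — the paper phrases this as flat base change of blowups along $p_X$, which is the same computation), then verify the unit and associativity axioms by checking them on the dense open over $X\setminus Y$ and invoking reducedness of the sources plus separatedness of the target. Your explicit remark that the density of $G\times_T\widetilde{X}^{\circ}$ and $G\times_T G\times_T\widetilde{X}^{\circ}$ (and the flatness of $G/T$ lurking behind it) is the real technical input is in fact more careful than the paper, which asserts this density without comment.
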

\begin{proof}
We have the canonical isomorphisms
\begin{equation}
\label{gty}
    G\times_T Y\cong (G\times_T X)\times_{p_X, X} Y\cong (G\times_T X)\times_{\mu, X} Y,
\end{equation}
where $p_X$ is the natural projection $G\times X\ra X$, the morphisms from $Y$ to $X$ are always the inclusion $Y\hra X$, and we have included the morphisms in the subscript to emphasize which fiber product we are taking.
Indeed, the first isomorphism is automatic, and the second isomorphism follows from the $G$-invariance of $Y$.

Since $p_X$ is flat, by \cite[Prop 8.1.12.(c)]{liu}, the blow up of $G\times_T X$ with center $G\times_T Y$ is canonically isomorphic to $G\times_X \widetilde{X}$. By (\ref{gty}), we see that 
$G\times_X\widetilde{X}$ is also the blow up of $G\times_T X$ with center the fiber $\mu^{-1}(Y)$.
By the universal property of the  blow up \cite[Prop 8.1.15]{liu}, there exists a unique morphism $\widetilde{\mu}: G\times_T \widetilde{X}\ra G\times_T X$, making the following diagram commutative:
\begin{equation}
\xymatrix{
    G\times_T \widetilde{X} \ar[r]^-{\widetilde{\mu}} \ar[d] &\widetilde{X} \ar[d]\\
    G\times_T X \ar[r]_-{\mu} & X.
    }
\end{equation}

Let $\mu^G:G\times_T G\ra G$ be the group multiplication morphism and $e: T\ra G$ be the identity morphism.
To verify that $\widetilde{\mu}$ defines a group action on $\widetilde{X}$, and that $\pi:\widetilde{X}\ra X$ is $G$-equivariant, we need to show the following three identities of morphisms:
\begin{equation}
    \widetilde{\mu}\circ (1_G\times \widetilde{\mu})=\widetilde{\mu} \circ (\mu^G \times 1_{\widetilde{X}}): \;G\times_T G\times_T \widetilde{X}\ra \widetilde{X},
\end{equation}
\begin{equation}
    \widetilde{\mu}\circ (e\times 1_{\widetilde{X}})=1_{\widetilde{X}}: \;\widetilde{X}\ra \widetilde{X},
\end{equation}
\begin{equation}
\label{equivariant}
    \mu\circ (1_G\times \pi) =\pi\circ \widetilde{\mu}: \;\;\;G\times_T \widetilde{X} \ra X.
\end{equation}

All three pairs of morphisms agree on the open and dense subscheme corresponding to $X\setminus Y$.
By the assumption on the reducedness and separatedness on the domains and target, \cite[Ex II.4.2]{hartshorne}  shows that the three identities hold over all of their domains. (Note that the separatedness of $X$ implies the separatedness of $\tilde{X}$ by \cite[\href{https://stacks.math.columbia.edu/tag/01O2}{01O2}]{stacks}).
\end{proof}

\begin{lemma}[\bf Group actions on normalization]
\label{group on normalization}
Let $X$ be a scheme over a scheme $T$.
Suppose that $X$ is an integral scheme.
Let $G$ be a group scheme over $T$ that acts on $X$ via the action morphism $\mu: G\times_T X \ra X$.
Let $\pi: X'\ra X$ be the normalization of $X$.
If $G\times_T X'$ is a normal and integral scheme, then $X'$ admits a $G$-action, making the normalization morphism $\pi: X'\ra X$ to be $G$-equivariant.
\end{lemma}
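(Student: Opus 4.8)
The plan is to mimic the strategy of Lemma \ref{group on blow up}, using the universal property of normalization in place of that of the blowup. First I would form the product $G \times_T X'$ and observe that, since $\pi : X' \to X$ is the normalization of the integral scheme $X$, the projection $\mathrm{pr}_X : G \times_T X \to X$ is flat (as $G \to T$ is flat, being a group scheme we may assume smooth or at least flat), so its base change $1_G \times \pi : G \times_T X' \to G \times_T X$ should again be the normalization of $G \times_T X$, \emph{provided} $G \times_T X'$ is normal and integral. This is exactly the hypothesis we are handed, so I would invoke the characterization of normalization as the terminal object among dominant morphisms from normal integral schemes (equivalently, \cite[\href{https://stacks.math.columbia.edu/tag/035E}{035E}]{stacks} or the universal property in \cite{liu}): the composite $G \times_T X' \xrightarrow{1_G \times \pi} G \times_T X \xrightarrow{\mu} X$ is a dominant morphism from a normal integral scheme to $X$, hence factors uniquely through the normalization $\pi : X' \to X$, yielding a morphism $\widetilde{\mu} : G \times_T X' \to X'$.

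Next I would verify that $\widetilde\mu$ is a group action and that $\pi$ is $G$-equivariant. As in Lemma \ref{group on blow up}, this reduces to checking three identities of morphisms:
\begin{equation}
    \widetilde{\mu}\circ (1_G\times \widetilde{\mu})=\widetilde{\mu} \circ (\mu^G \times 1_{X'}), \qquad
    \widetilde{\mu}\circ (e\times 1_{X'})=1_{X'}, \qquad
    \mu\circ (1_G\times \pi) =\pi\circ \widetilde{\mu}.
\end{equation}
Each pair of morphisms agrees over the open dense locus where $\pi$ is an isomorphism (i.e.\ the preimage of the regular locus of $X$, or more simply over the generic point, since everything in sight is integral), because there the two sides are both induced by the genuine action $\mu$. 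I would then close the argument by the standard separatedness-plus-reducedness principle \cite[Ex.\;II.4.2]{hartshorne}: the source schemes $G \times_T G \times_T X'$, $G \times_T X'$, and $X'$ are integral (hence reduced), and the target $X'$ is separated over $T$ because $X$ is (normalization is affine, hence separated, and separatedness descends along the structure maps), so two morphisms agreeing on a dense subscheme into a separated reduced target must coincide.

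The main obstacle, and the reason the hypothesis ``$G \times_T X'$ is normal and integral'' is imposed rather than derived, is that normalization does \emph{not} in general commute with base change: even for flat base change along $\mathrm{pr}_X$, the scheme $G \times_T X'$ need not be the normalization of $G \times_T X$ unless one knows it is itself normal (and irreducible, to match the integrality of $X$). Once normality and integrality of $G \times_T X'$ are assumed, the dominance of $1_G \times \pi$ combined with the universal property gives the factorization cleanly; the delicate point I would be careful about is confirming that $(1_G \times \pi)$ really is birational/dominant onto $G \times_T X$ so that the factorization through $X'$ is forced, and that the generic fibers line up so the three cocycle identities can be checked on a single dense open. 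Everything else is a formal transcription of the blowup argument.

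\begin{proof}
Since $X$ is integral and $G \to T$ is flat, the projection $\mathrm{pr}_X : G\times_T X \to X$ is flat, and the base-changed morphism $1_G \times \pi : G\times_T X' \to G\times_T X$ is birational onto its image and dominant. By hypothesis $G\times_T X'$ is normal and integral, so the composite
\begin{equation}
    G\times_T X' \xrightarrow{\;1_G\times \pi\;} G\times_T X \xrightarrow{\;\mu\;} X
\end{equation}
is a dominant morphism from a normal integral scheme to the integral scheme $X$. By the universal property of normalization \cite[Prop.\;8.1.15]{liu}, this composite factors uniquely as $\pi \circ \widetilde{\mu}$ for a morphism $\widetilde{\mu}: G\times_T X' \to X'$. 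This establishes identity \eqref{equivariant} by construction, and the remaining two cocycle identities hold over the dense open locus where $\pi$ is an isomorphism; by the integrality (hence reducedness) of the sources $G\times_T G\times_T X'$, $G\times_T X'$, $X'$ and the separatedness of $X'$ over $T$ (normalization being affine, hence separated), \cite[Ex.\;II.4.2]{hartshorne} forces them to hold everywhere. Thus $\widetilde{\mu}$ is a $G$-action making $\pi$ equivariant.
\end{proof}
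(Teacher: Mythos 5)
Your construction of the lifted action is exactly the paper's: apply the universal property of normalization to the dominant (indeed surjective) composite $G\times_T X' \xrightarrow{1_G\times\pi} G\times_T X \xrightarrow{\mu} X$ out of the normal integral scheme $G\times_T X'$, obtaining a unique $\widetilde{\mu}$ with $\pi\circ\widetilde{\mu}=\mu\circ(1_G\times\pi)$. That part is fine (note only that the flatness of $\mathrm{pr}_X$ plays no role here and is not even guaranteed, since a group scheme over a general base need not be flat; all you need is that $\pi$, hence $1_G\times\pi$, is surjective).

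The gap is in your verification of the action axioms. You invoke the Hartshorne Ex.\;II.4.2 principle, which requires the \emph{sources} $G\times_T G\times_T X'$ and $G\times_T X'$ to be reduced and the target $X'$ to be separated over $T$. But this lemma, unlike Lemma \ref{group on blow up}, assumes neither: the hypotheses say nothing about $G\times_T G\times_T X'$ (integrality of $G\times_T X'$ does not formally propagate to a further product with $G$ over $T$), and $X/T$ is not assumed separated here, so your claim that ``separatedness descends'' has no starting point. These are precisely the extra hypotheses that the blowup lemma carries and that the normalization lemma is designed to avoid. The paper's proof instead checks each identity after composing with $\pi$ --- where both sides collapse, via $\pi\circ\widetilde\mu=\mu\circ(1_G\times\pi)$, to the known associativity/unit identities for $\mu$ on $X$ --- and then removes $\pi$ using the uniqueness clause of the universal property of normalization, rather than a dense-open/reducedness argument on the triple product. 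Your argument does go through for the unit axiom (there the source is $X'$ itself, which is integral), and it is harmless in the paper's application where $G=\Gg_{m,R}$ and all products are integral; but as a proof of the lemma as stated you must either add the reducedness and separatedness hypotheses explicitly or switch to the ``compose with $\pi$ and use uniqueness of the factorization through the normalization'' route.
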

\begin{proof}
Consider the surjective morphism
$G\times_T X' \xra{1_G\times \pi} G\times_T X \xra{\mu} X$.
Since $G\times_T X$ is normal and integral, the universal property of normalization \cite[Prop 12.44]{gowe}  induces a unique morphism 
$\mu': G\times_T X' \ra X'$ so that we have the equality of morphisms:
\begin{equation}
\label{normal 3}
    \mu\circ (1_G\times \pi) =\pi\circ \mu': \;\;\;G\times_T X' \ra X.
\end{equation}

If we can show that $\mu'$ is a $G$-action, then (\ref{normal 3}) shows that the normalization $\pi:X'\ra X$ is $G$-equivariant. We now proceed to show that $\mu'$ is indeed a $G$-action:
Let $\mu^G:G\times_T G\ra G$ be the group multiplication morphism and $e: T\ra G$ be the identity morphism.
We need to show the following two equalities of morphisms:

\begin{equation}
\label{normal 1}
    \mu'\circ (1_G\times \mu')=\mu' \circ (\mu^G \times 1_{X'}): \;G\times_T G\times_T X'\ra X',
\end{equation}
\begin{equation}
\label{normal 2}
    \mu'\circ (e\times 1_{X'})=1_{X'}: \;X'\ra X'.
\end{equation}

To show (\ref{normal 1}), by the uniqueness of $\mu'$, 
it suffices to show that the two morphisms are equal after a composition of $\pi: X'\ra X$, i.e., we want to show that 
\begin{equation}
\label{normal 1'}
    \pi\circ \mu'\circ (1_G\times \mu')=\pi\circ \mu' \circ (\mu^G \times 1_{X'}): \;G\times_T G\times_T X'\ra X' \ra X.
\end{equation}

By (\ref{normal 3}), both the morphisms in (\ref{normal 1'}) factors as
\begin{equation}
   G\times_T G\times_T\times X' \xra{ 1_G\times 1_G\times \pi } G\times_T G\times_T \times X\ra X,
\end{equation}
where the last morphism is 
\begin{equation}
    \mu\circ (1_G\times \mu)=\mu \circ (\mu^G \times 1_{X}): \;G\times_T G\times_T X\ra X.
\end{equation}

We thus have (\ref{normal 1}).
The equality (\ref{normal 2}) follows similarly: by (\ref{normal 3}), we have 
\begin{equation}
    \pi\circ \mu'\circ (e\times 1_{X'}) = \mu\circ \pi\circ (1_G\times \pi) \circ (e\times 1_{X'}) =\mu'\circ (e\times 1_X)=1_X,
\end{equation}
so we have that (\ref{normal 2}) holds after composing $\pi$, hence (\ref{normal 2}) holds by the uniqueness of $f'$ in the universal property of normalization.
\end{proof}

\subsection{The schemes \texorpdfstring{$Z(r)$}{Z(r)}}\label{section on Z}$\;$

Our goal in \S\ref{proof ugm proper} is to prove Theorem \ref{ugm proper}.

A feature of our proof of Theorem \ref{ugm proper} is that the proof of $Y_+$ and $Y_-$ are closed is similar to the proof of the universal closedness of $U/\GB\ra S$, in the sense that both proofs start with a point $r\in X(R)$ for some discrete valuation ring $R$, which then produces a rational map $\Proj^1_R\dashrightarrow X_R$, and both proofs rely heavily on the structure of the resolution of indeterminancy $Z(r)$ of the rational map $\Proj^1_R\dashrightarrow X_R$. Therefore it seems best to first introduce and study $Z(r)$ in this Section \ref{section on Z} and then to diverge to separate proofs of the items (1)-(2) of Theorem \ref{ugm proper} in Sections \ref{closed Y}-\ref{proof of sep}.\\

Our next goal is to define what $Z(r)$ is. We do so  in Lemma \ref{lemma int Z}. 

Let $R$ be a discrete valuation ring with fraction field $L$ and residue field $\kappa$.

Let $r\in X(R)$. 
Let $\eta\in X(L)$ be the restriction of $r$ to the open subscheme $\Spec(L)\subset \Spec(R)$.
Taking the orbit of $\eta$, we have the morphism $\mu_{\eta}: \Gg_{m,L}\ra X$. 
Since $\GB$ acts trivially on $S$, we have that the image of the composition $\Gg_{m,L}\xra{\mu_{\eta}}X\ra S$ is a point, and that the morphism $\Gg_{m,R}\xra{\mu_{\eta}} X\ra S$ factors through a morphism $\Spec(R)\ra S$.
Therefore we can extend the morphism $\Gg_{m,L}\ra S$ to a morphism $\Proj^1_L\ra S$. 
Since $X$ is proper over $S$, we can extend $\mu_{\eta}$ to an $S$-morphism $\overline{\mu_{\eta}}: \Proj^1_L\ra X$. 
We then have a graph morphism 
$\Gamma_{\ov{\mu_{\eta}}}: \Proj^1_L\ra \Proj^1_L\times_S X$, which is a closed immersion \cite[p.106]{hartshorne}, and is $\Gg_{m,L}$-equivariant.
Let $j:\Proj^1_L\times_S X\ra \Proj^1_R\times_S X$ be the natural open immersion induced by the open immersion $\Spec(L)\hra \Spec(R)$.
Let $W$ be the scheme theoretic image of $j\circ \Gamma_{\ov{\mu_{\eta}}}: \Proj^1_L\ra \Proj^1_R\times_S X$.
Since $X$ is projective over $S$, we have that $X_R:= X\times_S \Spec(R)$ is projective over $\Spec(R)$.
The morphism $p_X\circ j\circ \Gamma_{\ov{\mu_{\eta}}}: \Proj^1_L\ra X$ induces a rational map $b: \Proj^1_R \dashrightarrow X_R$.

\begin{lemma}[\bf Introduce $Z(r)$]
\label{lemma int Z}
There exists a proper birational morphism $\pi_5: Z(r)\ra W$ with $Z(r)$ regular, 
making the following diagram commutative:
\begin{equation}
\label{el pt indet}
    \xymatrix{
    \Proj_R^1 \ar[d]_-{\pi_2} \ar@{-->}[dr]^-{b} &
    W\ar[l]_-{\pi_1} \ar[d]^-{\pi_3} & Z(r)\ar[l]_-{\pi_5}\\
    R&
    X_R. \ar[l]^-{\pi_4} &
    }
\end{equation}
We have that $\pi_5$ is an isomorphism above every regular point of $W$. Moreover,
\ben
\item
$Z(r)$ is the last element $W_n$ in the following sequence:
\begin{equation}
\label{W seq}
    \pi_5: Z(r)=W_n\ra W_{n-1} \ra... \ra W_1\ra W_0=W,
\end{equation}
where $W_1\ra W$ is the normalization, and for every $i\ge 1$, $W_{i+1}\ra W_i$ is obtained by first blowing up,
$\w{W_i}\ra W_i$,  the singular locus (which by definition is reduced) of $W_i$, and then  by normalizing, $W_{i+1}\ra \w{W_i}$, the resulting 
$\w{W_i}$;
\item
$Z(r)$ is also the last element $Z_n$ in the following sequence
\begin{equation}
\label{Z seq}
    \pi_6: Z(r)= Z_n \xra{p_n} Z_{n-1} \xra{p_{n-1}} ... \xra{p_1} Z_0= \Proj^1_R,
\end{equation}
where for each $i\ge 1$, $Z_i$ is obtained by blowing up a closed point of $Z_{i-1}$.
\een
\end{lemma}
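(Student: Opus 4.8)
The goal is to produce the regular scheme $Z(r)$ together with the two factorizations (\ref{W seq}) and (\ref{Z seq}). The natural strategy is to invoke resolution of singularities for the $2$-dimensional excellent scheme $W$, while keeping careful track of the fact that $W$ is birational to $\Proj^1_R$ through the rational map $b$. First I would observe that $W$, being the scheme-theoretic image of $\Proj^1_L$ inside the projective $R$-scheme $\Proj^1_R \times_S X_R$, is an integral, projective, $2$-dimensional scheme over the excellent discrete valuation ring $R$; hence $W$ is itself excellent, and the projection $\pi_1: W \to \Proj^1_R$ is a proper birational morphism (it is an isomorphism over the generic fiber $\Proj^1_L$, where it recovers the graph of $\overline{\mu_\eta}$). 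The construction of $Z(r)$ will then be the classical Lipman-style resolution for excellent surfaces: alternately normalize and blow up the (reduced) singular locus until the process terminates.

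\textbf{Key steps.} First, establish that $W$ is excellent of dimension $2$ and integral, so that Lipman's theorem on resolution of singularities of excellent surfaces applies and the sequence (\ref{W seq}) terminates after finitely many steps $n$ with $W_n =: Z(r)$ regular; this simultaneously gives the proper birational morphism $\pi_5: Z(r) \to W$ and the statement that $\pi_5$ is an isomorphism over the regular locus of $W$ (each normalization and each blow-up of the singular locus is an isomorphism away from the non-regular points). Second, define $\pi_3 = \pi_4^{-1}\circ(\text{projection to } X_R)$ more carefully: the second projection $W \to X_R$ composed with $\pi_4: X_R \to \Spec(R)$ realizes $W$ as a scheme over $R$ compatibly with $\pi_2 = \pi_4\circ\pi_3\circ(\text{nothing})$, and the rational map $b$ is resolved to an honest morphism precisely on the blown-up model, which is the content of the commutativity of the square with the dashed arrow. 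Third, produce the alternative factorization (\ref{Z seq}): since $\Proj^1_R$ is already regular and $\pi_1\circ\pi_5: Z(r)\to \Proj^1_R$ is a proper birational morphism of regular $2$-dimensional schemes, I would invoke the structure theorem that any proper birational morphism between regular excellent surfaces factors as a finite composition of blow-ups at closed points (the two-dimensional case of the factorization of birational morphisms, e.g. via Shafarevich/Abhyankar or the standard valuation-theoretic argument). This yields $Z(r) = Z_n \to \cdots \to Z_0 = \Proj^1_R$ with each $Z_i \to Z_{i-1}$ a point blow-up.

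\textbf{Main obstacle.} The hard part is not the formal bookkeeping of the diagram but ensuring the \emph{termination} of the resolution process (\ref{W seq}) and the existence of the point-blow-up factorization (\ref{Z seq}) in the required generality, namely over an arbitrary excellent DVR $R$ rather than over a field. The excellence of $W$ is the crucial input: it guarantees that normalizations are finite morphisms and that the singular locus is a proper closed subset, so that Lipman's resolution algorithm is available and terminates. I would need to confirm that $W$ inherits excellence from $R$ (true, since finite-type schemes over an excellent base are excellent, and a DVR is excellent by hypothesis on $J$ via the universally Japanese assumption feeding into Assumption \ref{assumption p}), and that dimension $2$ is genuinely the relevant case so that the surface-resolution machinery applies cleanly. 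The factorization into point blow-ups in step three likewise rests on the two-dimensional regularity of both source and target; in higher dimensions no such clean factorization holds, so it is essential that $W$ and $Z(r)$ are surfaces. Once excellence and the dimension count are secured, the remaining verifications — commutativity of (\ref{el pt indet}), the isomorphism of $\pi_5$ over regular points, and the equivariance needed later — are routine and follow from the universal properties of normalization and blow-up already recorded in Lemmata \ref{group on blow up} and \ref{group on normalization}.
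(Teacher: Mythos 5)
Your overall strategy is, in substance, the paper's: the paper proves Lemma \ref{lemma int Z} by a single citation of \ci[Thm.\;9.2.7]{liu} on elimination of points of indeterminacy for rational maps from a regular fibered surface over a one-dimensional Dedekind scheme to a projective scheme, and that theorem delivers exactly the two factorizations (\ref{W seq}) and (\ref{Z seq}) simultaneously; what you propose is essentially to re-prove that theorem from its standard ingredients (strong desingularization of the graph closure, plus the factorization of proper birational morphisms between regular two-dimensional schemes into point blow-ups). Your identification of $W$ as an integral, projective, two-dimensional $R$-scheme with $\pi_1\colon W\to\Proj^1_R$ projective and birational is correct, and your third step (applying the factorization theorem to $\pi_1\circ\pi_5$ once $Z(r)$ is known to be regular) is exactly how (\ref{Z seq}) is obtained.

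The step that does not hold up as written is your justification of the excellence of $R$, hence of $W$, which is what you use to make Lipman's algorithm terminate and so to produce (\ref{W seq}). The discrete valuation ring $R$ in this lemma is not supplied by Assumption \ref{assumption p}: it is produced by valuative criteria (via \ci[Ex.\;II.4.11]{hartshorne} in \S\ref{closed Y}, and via Lemma \ref{valuative uni cl} in \S\ref{proof of uni cl}, where one is moreover allowed to pass to a larger valuation ring dominating it). Such an $R$ is not of finite type, nor essentially of finite type, over the universally Japanese ring $J$, so the inference ``$J$ universally Japanese $\Rightarrow R$ excellent'' is invalid, and not every DVR is excellent. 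Accordingly, your argument needs a route to the strong desingularization of $W$ that does not pass through excellence of $R$; the natural one, and the one implicit in the cited \ci[Thm.\;9.2.7]{liu}, is that $W$ admits a projective birational morphism $\pi_1$ onto the \emph{regular} two-dimensional scheme $\Proj^1_R$, and schemes in that position admit a desingularization in the strong sense over an arbitrary one-dimensional Dedekind base. With that substitution the rest of your outline goes through. (A minor notational slip: $\pi_3$ is simply the second projection $W\to X_R$; the expression $\pi_4^{-1}\circ(\text{projection to }X_R)$ does not typecheck, since $\pi_4\colon X_R\to \Spec(R)$ is the structure morphism and is not invertible.)
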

\begin{proof}

The commutative diagram
(\ref{el pt indet})
is exactly the elimination of points of indeterminacy for the rational map $b$ as constructed in \cite[Thm.\;9.2.7]{liu},
where it is shown that 
every rational map from a regular fibered surface over a one dimensional Dedekind scheme $D$ (such as $\Proj^1_R$ over $\Spec(R)$) to a projective $D$-scheme (such as $X_R$ over $\Spec(R)$) admits an elimination of indeterminacy $\pi_6:Z(r)\ra Z_0$, which is a finite sequence of blowing-ups of closed points of the target as in (\ref{Z seq}), and which factors through the desingularization of the closure of the graph of the rational map as in (\ref{W seq}). We thus obtain our lemma as a direct application of \cite[Thm.\;9.2.7]{liu}.
\end{proof}

Our goal in the remainder of this \S\ref{section on Z} is to prove Lemma \ref{main}, which describes the reduction of the closed fiber of $Z(r)$ over $\Spec(R)$.
We now fix $r\in X(R)$, and suppress the argument $r$ in $Z(r)=Z$.
We start with the following

\begin{lemma}[\bf $\Gg_{m,R}$ action on the partial resolutions $Z_i$]\label{eachz}
Each $Z_i$, $0\le i\le n$, in the sequence (\ref{Z seq})
admits a $\Gg_{m,R}$-action so that each $p_{i}:Z_i\ra Z_{i-1}$ is $\Gg_{m,R}$-equivariant. 
Furthermore, each $p_{i+1}: Z_{i+1}\ra Z_i$ is a blow up of a $\Gg_{m,R}$-fixed closed point of $Z_{i}$. 
\end{lemma}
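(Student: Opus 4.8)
The plan is to prove both assertions simultaneously by induction on $i$, carrying along as an auxiliary hypothesis the equivariance of the rational map $b_i \colon Z_i \dashrightarrow X_R$ obtained by composing $b$ with the tower $Z_i \to \cdots \to Z_0 = \Proj^1_R$. For the base case $i=0$, the scheme $Z_0 = \Proj^1_R$ carries the standard $\Gg_{m,R}$-action extending the translation action on $\Gg_{m,R}\subset \Proj^1_R$, with fixed sections $0_R$ and $\infty_R$. By construction the orbit morphism $\mu_\eta$ is equivariant (it sends $g$ to $g\cdot\eta$, so $\mu_\eta(hg)=h\cdot\mu_\eta(g)$), hence so is its extension $\overline{\mu_\eta}$, and therefore $b$ is an equivariant rational map, since it agrees with $\overline{\mu_\eta}$ on the dense invariant generic fiber $\Proj^1_L$.

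For the inductive step, suppose $Z_i$ carries a $\Gg_{m,R}$-action for which $p_1,\dots,p_i$ and $b_i$ are equivariant. By the construction of the tower (\ref{Z seq}) via \cite[Thm.\;9.2.7]{liu}, the center $z_i$ of the blow up $p_{i+1}\colon Z_{i+1}\to Z_i$ is a closed point at which $b_i$ is not defined, i.e.\ a point of the indeterminacy locus of $b_i$. The key point, which I would establish first, is that $z_i$ is $\Gg_{m,R}$-fixed. Indeed, since $b_i$ is equivariant, its indeterminacy locus is a $\Gg_{m,R}$-invariant closed subset; as $b_i$ is a rational map from a regular (hence normal) fibered surface to a projective scheme, this locus has codimension at least two, so it is a finite set of closed points. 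The $\Gg_{m,R}$-orbit of $z_i$ is the image of the connected, irreducible scheme $\Gg_{m,R}$ under the orbit morphism, hence irreducible; being contained in the finite invariant indeterminacy set, it reduces to the single point $z_i$, so $z_i$ is fixed. The reduced closed point $\{z_i\}$ is then a $\Gg_{m,R}$-invariant closed subscheme of $Z_i$ with dense complement, since $Z_i$ is irreducible of positive dimension.

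I would then invoke Lemma \ref{group on blow up} with $T=\Spec R$, $G=\Gg_{m,R}$, $X=Z_i$, $Y=\{z_i\}$ and $\widetilde X = Z_{i+1}$ to obtain the $\Gg_{m,R}$-action on $Z_{i+1}$ making $p_{i+1}$ equivariant. Its hypotheses hold: $Z_i$ is separated over $R$, being projective over $R$; and $Z_{i+1}$ is regular, because blowing up a closed point of a regular surface (a regularly embedded center) yields a regular scheme, so $Z_{i+1}$ is reduced, and consequently $\Gg_{m,R}\times_R Z_{i+1}$ and $\Gg_{m,R}\times_R\Gg_{m,R}\times_R Z_{i+1}$, whose structure sheaves are locally $\mathcal{O}_{Z_{i+1}}[t^{\pm 1}]$ and $\mathcal{O}_{Z_{i+1}}[t_1^{\pm 1},t_2^{\pm 1}]$, are reduced as well (localizations of polynomial rings over a reduced ring are reduced). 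Finally $b_{i+1}=b_i\circ p_{i+1}$ is equivariant as a composite of equivariant maps, and $Z_{i+1}$ is again a regular, irreducible, projective $R$-surface, so the inductive hypotheses are restored; this closes the induction.

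The step I expect to be the main obstacle is the verification that the centers are fixed. It rests on two things: first, that Liu's elimination-of-indeterminacy tower blows up exactly the points where the current (equivariant) rational map fails to extend; and second, the combination of the invariance of that indeterminacy locus with the orbit-irreducibility argument. The remaining ingredients — the reducedness of $\Gg_{m,R}\times_R Z_{i+1}$ and $\Gg_{m,R}\times_R\Gg_{m,R}\times_R Z_{i+1}$ and the separatedness of $Z_i$, as required by Lemma \ref{group on blow up} — are routine once one records that each $Z_i$ is regular and projective over $R$.
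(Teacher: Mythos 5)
Your overall strategy is genuinely different from the paper's: you induct up the tower (\ref{Z seq}) using equivariance of the induced rational maps $b_i$ and the finiteness of their indeterminacy loci, whereas the paper first equips the full resolution $Z=Z(r)$ with a $\Gg_{m,R}$-action via the sequence (\ref{W seq}) (normalizations and blow-ups of singular loci, which lift the action by Lemmata \ref{group on blow up} and \ref{group on normalization} because the singular loci of the normal $W_i$ are finite sets of closed points, hence fixed), deduces equivariance of $Z\ra Z_i$ by a density argument, and then shows each center is fixed by a ``sweeping'' argument: a non-fixed center would force the two-dimensional integral $Z$ to map onto the one-dimensional orbit closure of that center, contradicting surjectivity of $\pi_6$.

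The gap in your version is precisely the step you flag: the assertion that the center $z_i$ of $p_{i+1}$ is an indeterminacy point of $b_i$. This is not what Lemma \ref{lemma int Z} (i.e.\ \cite[Thm.\;9.2.7]{liu}) gives you. The tower (\ref{Z seq}) is obtained by factoring the \emph{already constructed} morphism $\pi_6:Z\ra \Proj^1_R$ (where $Z$ is the canonical desingularization of the graph closure $W$ via (\ref{W seq})) into point blow-ups, so the centers are characterized as the points over which $Z\ra Z_i$ has positive-dimensional fiber, not as points where $b_i$ fails to extend. For $i=0$ the two notions agree, because $Z\ra W\ra \Proj^1_R$ is an isomorphism over the domain of definition of $b$; but for $i\ge 1$ you would need to rule out that the canonical desingularization of $W$ (which need not be the minimal resolution of indeterminacy of $b$) contracts a curve to a point of $Z_i$ at which $b_i$ is already defined. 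Equivalently, you would need: if $b_i$ is defined at $w\in Z_i$, then $Z\ra \ov{\Gamma_{b_i}}$ does not contract a curve to the (regular) point $(w,b_i(w))$. This is not automatic and you give no argument for it; without it, the invariant finite set you use (the indeterminacy locus of $b_i$) is not known to contain $z_i$, and the orbit-irreducibility argument does not apply. The rest of your proof (the base case, the invariance of the indeterminacy locus of an equivariant rational map, the reducedness and separatedness hypotheses needed for Lemma \ref{group on blow up}) is fine, but to close this central step one is essentially pushed back to the paper's route of first constructing the action on $Z$ itself.
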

\begin{proof}

We first show the 

\textbf{CLAIM}: $Z_n=Z$ admits a $\Gg_{m,R}$-action so that $\pi_6: Z\ra \Proj^1_R$ is $\Gg_{m,R}$-equivariant.

Then we finish the proof using an increasing induction on $i\ge 0$.

To show the \textbf{CLAIM} above, by Lemmata \ref{group on blow up}, \ref{group on normalization}, and \ref{lemma int Z}.(1), we see that it suffices to show that for each $i\ge 1$ the singular locus of $W_i$ is made of $\Gg_{m,R}$-fixed points. 
Indeed, since each $W_i$, $i\ge 1$, is normal, the singular locus has to be closed points \cite[Prop.\;4.2.24]{liu}. If these singular closed points are not fixed by the $\Gg_{m,R}$-action, then the orbits of the points under $\Gg_{m,R}$ would form a one dimensional subscheme of the singular locus of $W_i$, contradicting the normality of $W_i$. The \textbf{CLAIM} is thus proved.

We now prove the lemma by induction on $i\ge 0$.

Base case $i=0$:

The $\Gg_{m,R}$-action on $\Proj^1_R$ is the natural one induced by the multiplication on the open subscheme $\Gg_{m,R}\subset \Proj^1_R$. The statement about $p_0$ is vacuous. 
Let $z_0$ be the closed point of $Z_0$ that is the center of the blow up $p_1: Z_1\ra Z_0$. We would like to show that $z_0$ is a $\Gg_{m,R}$-fixed point:

If $z_0$ is not fixed by the $\Gg_{m,R}$-action, then the fiber $\pi_6^{-1}(z_0)$ inside $Z$ has an irreducible component $E$ that is not stable under the $\Gg_{m,R}$-action. 
Let $\xi_E$ be the generic point of $E$.
Consider the orbit morphism $\mu_{\xi_E}:\Gg_{m,k(\xi_E)}\ra Z$.
The scheme theoretic image $O(E)$ of $\mu_{\xi_E}$ properly contains $E$ as a closed subscheme since $E$ is not $\Gm$-stable. Thus dim$(O(E))\ge 2$.
Since $Z$ is two dimensional and integral, the set theoretic image of $\mu_{\xi_E}$ is dense inside $Z$. 
Therefore, the set theoretic image $\pi_6(Z)$ is contained in the closure of the $\Gg_{m,R}$-orbit of $z_0$, which is one dimensional, contradicting that $\pi_6$ is surjective.
Thus $z_0$ has to be a $\Gg_{m,R}$-fixed point, and the base case is established.

Now suppose that we have established the case $i-1$ and would like to show the case $i$.

Since $p_i: Z_i\ra Z_{i-1}$ is a blow up of a $\Gg_{m,R}$-fixed closed point of $Z_{i-1}$.
By Lemma \ref{group on blow up}, we see that $Z_i$ admits a $\Gg_{m,R}$-action so that $p_i$ is $\Gg_{m,R}$-equivariant.
Let $z_i$ be the closed point of $Z_i$ that is the center of the blow up $p_{i+1}:Z_{i+1}\ra Z_i$.
We would like to show that $z_i$ is a $\Gg_{m,R}$-fixed point.
We first show that the morphism $p_n\circ ... \circ p_{i+1}: Z\ra Z_i$ is $\Gg_{m,R}$-equivariant, i.e.,
the analogue of the equation (\ref{equivariant}) holds in our case:
\begin{equation}
\label{Gm equivariant}
    \ah_{Z_i}\circ (1_G\times (p_n\circ ...\circ p_{i+1}))= p_n\circ ...\circ p_{i+1} \circ \ah_Z: \;\;\Gg_{m,R}\times_{R} Z\ra Z_i, 
\end{equation}
where $\ah_Z$ and $\ah_{Z_i}$ denote the $\Gg_{m,R}$-action morphisms on $Z$ and $Z_i$. 
Since $Z_i$ is obtained from $Z_0=\Proj^1_R$ by iterated blowups on closed points, by the inductive hypothesis we have that the projection $p_i\circ ...\circ p_1: Z_i\ra Z_0$ is a $\Gg_{m,R}$-equivariant isomorphism over an open and dense subscheme $U_0$ of $Z_0$. 
From the \textbf{CLAIM}, we see that 
the equation (\ref{Gm equivariant}) holds when restricted to the open and dense subscheme $\Gg_{m,R}\times_{R}\pi_6^{-1}(U_0)$ of $\Gg_{m,R}\times_{R} Z$. 
Therefore, by \cite[Ex II.4.2]{hartshorne}, we see that the equality (\ref{Gm equivariant}) holds, so $p_n\circ ...\circ p_{i+1}$ is $\Gg_{m,R}$-equivariant. 
We can now use the argument in the base case to conclude. 
Namely, if $z_i$ is not fixed by the $\Gg_{m,R}$-action, then $(p_n\circ ...\circ p_{i+1})^{-1}$ would trace out a dense two dimensional subscheme of $Z$ under the $\Gg_{m,R}$-action. 
We then have that $p_n\circ ...\circ p_{i+1}: Z\ra Z_i$ maps $Z$ to the $\Gg_{m,R}$-orbit of $z_i$ in $Z_i$, which is one dimensional, contradicting that $\pi_6: Z\ra Z_0$ is surjective. 
\end{proof}

Lemma \ref{eachz} above essentially contains all the information of $Z$ in Lemma \ref{main}. 
However, in order to keep track of the $\Gg_{m,R}$-actions under each blow up $p_i: Z_i\ra Z_{i-1}$ in the proof of Lemma \ref{main}, we need to study a family of affine charts involved in the sequence of blow ups (\ref{Z seq}). 
The subtlety is that different $\Gg_{m,R}$-fixed points of $Z_i$ have non-isomorphic, although similar, $\Gg_{m,R}$-invariant affine neighborhood,
and that we need to know what happens to the $\Gg_{m,R}$-action when we blow up any of the $\Gg_{m,R}$-fixed points of $Z_i$.

We start with the affine charts for $Z_0=\Proj^1_R$: we have that $\A^1_R$ and $\Proj^1_R\setminus 0_R\cong \A^1_R$ cover $Z_0$, so the two affine charts are isomorphic to $\Spec(R[x])$ and $\Spec(R[x^{-1}])$, where $x$ is an independent variable. 
The variable $x$ has positive weight $w(x)$ under the $\Gg_{m,R}$-action. 
Let $\lam$ be the uniformizing parameter of $R$, by the triviality of the $\Gg_{m,R}$-action on $\Spec(R)$, we have that the weight of $\lam$ under $\Gg_{m,R}$ is $w(\lam)=0$.

The blow up $p_1: Z_1\ra Z_0$ has center $0_{\kappa}$ or $\infty_{\kappa}$. 
Without loss of generality, assume $p_1$ is the blow up of $0_{\kappa}$. (If $p_1$ is the blow up of $\infty_{\kappa}$ then we can exchange $x$ and $x^{-1}$).
By the description of a blow up algebra of a regular algebra in \cite[p.325 bottom]{liu}, we have that $Z_1$ can be covered by three $\Gg_{m,R}$-invariant affine charts isomorphic to $\Spec(R[x,\lam/x])$, $\Spec(R[y])$ and $\Spec(R[x^{-1}])$, where $y$ is a new independent variable.
We have that the weight of $x$ under the $\Gg_{m,R}$-action is still $w(x)$, while the weight of $\lam/x$ under the $\Gg_{m,R}$-action is $w(\lam/x)=w(\lam)-w(x)=-w(x)$. 
Since the exceptional divisor of $p_1$ is defined by $y$ in $\Spec(R[y])$ and $\lam/x$ in $\Spec(R[x,\lam/x])$, we have that the weight of $y$ under the $\Gg_{m,R}$-action is $w(y)=-w(\lam/x)=w(x)$.

Since $w(\lam/x), w(x)\ne 0$, we have that the $\Gg_{m,R}$-fixed point of $\Spec(R[x,\lam/x])$ is defined by the maximal ideal $\lb x,\lam/x\rb$ generated by $x$ and $\frac{\lam}{x}$. 
The blow up of $\Spec(R[x,\lam/x])$ with center $\lb x, \lam/x\rb$ is covered by the $\Gg_{m,R}$-invariant affine charts $\Spec(R[x,\frac{\lam}{x^2}])$ and $\Spec(R[\frac{x^2}{\lam}, \frac{\lam}{x}])$. As for the weight under the $\Gg_{m,R}$-action, we have that $w(\lam/x^2)=-2w(x)$.

Writing out the charts for iterated blow ups of $\Spec(R[x,\lam/x])$ at $\Gm$-fixed points as above, it is easy to see that we have the following 

\begin{lemma}[\bf  Compatibility of the weights at intersection points]
\label{new bl alg}
For each $Z_i$, $0\le i\le n$, in the sequence (\ref{Z seq}), we have that $Z_i$ can be covered by affine charts that are isomorphic to $\Spec(R[x])$ or 
\begin{equation}
\label{the new alg}
    \Spec(R[z_1, z_2]/(z_1^a z_2^b-\lam)),
\end{equation}
for some $a,b\in \Z_{\ge 0}$, with $a+b>0$. 

The weight of $x$ under the $\Gg_{m,R}$-action is nonzero.
Let $w(z_1)$ and $w(z_2)$ be the weights of $z_1$ and $z_2$ under the $\Gg_{m,R}$-action, 
then we have that $w(z_1)$ and $w(z_2)$ are both nonzero, and that $aw(z_1)+bw(z_2)=0$.
In particular, we have that $w(z_1)w(z_2)<0$.
\end{lemma}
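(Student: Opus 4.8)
The plan is to argue by induction on $i$ along the sequence (\ref{Z seq}), using Lemma \ref{eachz}, which guarantees that each $p_{i+1}\colon Z_{i+1}\to Z_i$ is the blow up of a $\Gg_{m,R}$-fixed closed point of $Z_i$ and carries a $\Gg_{m,R}$-action making $p_{i+1}$ equivariant. The inductive hypothesis to be propagated is exactly the assertion of the Lemma: $Z_i$ is covered by $\Gg_{m,R}$-invariant affine charts, each isomorphic either to $\Spec(R[x])$ with $w(x)\neq 0$, or to $\Spec(R[z_1,z_2]/(z_1^a z_2^b-\lam))$ with $a,b\ge 1$ and $a\,w(z_1)+b\,w(z_2)=0$, $w(z_1)w(z_2)<0$. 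The base case, together with the effect of the first blow up, has already been computed in the discussion preceding the statement: $Z_0=\Proj^1_R$ is covered by $\Spec(R[x])$ and $\Spec(R[x^{-1}])$, and the first blow up produces the chart $\Spec(R[x,\lam/x])$ of the second type (with $a=b=1$ and $w(\lam/x)=-w(x)$) alongside two charts of the first type.

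For the inductive step, I would first locate, among the charts of $Z_i$, the $\Gg_{m,R}$-fixed closed point to be blown up (the closed points of these charts all lie in the special fiber $\lam=0$). In a chart $\Spec(R[x])$ of the first type, since $w(x)\neq 0$ the group scales $x$ nontrivially, so the only $\Gg_{m,R}$-fixed closed point is the origin $\langle\lam,x\rangle$ of the special fiber. In a chart $\Spec(R[z_1,z_2]/(z_1^a z_2^b-\lam))$ of the second type the special fiber $\{z_1^a z_2^b=0\}$ is the union of the two components $\{z_1=0\}$ and $\{z_2=0\}$; since on either component the surviving coordinate has nonzero weight and is scaled nontrivially, the only $\Gg_{m,R}$-fixed closed point is the node $\langle z_1,z_2\rangle$. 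As the charts cover $Z_i$, the center of $p_{i+1}$ is one of these points, and the blow up replaces the single chart containing it by its two standard affine charts while the remaining charts pull back isomorphically; hence the new collection still covers $Z_{i+1}$, and its members are $\Gg_{m,R}$-invariant because the blow up is equivariant (Lemma \ref{group on blow up}) and the ratios defining the standard charts are $\Gg_{m,R}$-semi-invariant.

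It then remains to compute these two charts and to check that the prescribed forms and weight relations persist. Blowing up $\langle\lam,x\rangle$ in $\Spec(R[x])$ yields $\Spec(R[x/\lam])\cong\Spec(R[x'])$ of the first type (with $w(x/\lam)=w(x)\neq 0$) and $\Spec(R[x,\lam/x]/(x\cdot(\lam/x)-\lam))$ of the second type with $a=b=1$. Blowing up $\langle z_1,z_2\rangle$ in a second-type chart yields, in the chart $z_2=z_1 u$, the algebra $R[z_1,u]/(z_1^{a+b}u^{b}-\lam)$, and symmetrically $R[u',z_2]/((u')^{a}z_2^{a+b}-\lam)$ in the chart $z_1=z_2u'$; here $u=z_2/z_1$ and $u'=z_1/z_2$ have weights $w(z_2)-w(z_1)$ and $w(z_1)-w(z_2)$. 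In each case the new exponent pair is $(a+b,b)$ or $(a,a+b)$, and one checks directly that $(a+b)w(z_1)+b\,w(u)=a\,w(z_1)+b\,w(z_2)=0$, and symmetrically, so the defining linear relation on the weights is invariant under the blow up; combined with $a,b\ge 1$ this forces both new weights to be nonzero with negative product, and propagates $a,b\ge 1$. The only genuine bookkeeping, which I expect to be the main (though routine) point, is to confirm that the two standard affine charts of each blow up are precisely those written above — that is, to match the Rees-algebra charts with the explicit coordinate rings and to verify their $\Gg_{m,R}$-invariance via \cite[Ex II.4.2]{hartshorne} — after which the weight identities are immediate and the induction closes.
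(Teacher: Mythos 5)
Your proposal is correct and follows essentially the same route as the paper: an induction along the blow-up sequence, identifying the unique $\Gg_{m,R}$-fixed closed point in each chart (the origin of a first-type chart or the node of a second-type chart), writing out the two standard affine charts of the blow up, and checking that the exponent pair becomes $(a+b,b)$ or $(a,a+b)$ while the relation $aw(z_1)+bw(z_2)=0$ and the sign condition propagate. You are in fact slightly more explicit than the paper about the case where the center lies in a first-type chart $\Spec(R[x])$ (which the paper only treats for the initial blow up of $\Proj^1_R$), but the computation is identical, so the two arguments coincide.
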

\begin{proof}
From the paragraphs above Lemma \ref{new bl alg}, we see that $Z_i$ is covered by affine charts that are isomorphic to $\Spec(R[x])$ and iterated blow ups of $\Spec(R[x,\lam/x])$ at $\Gm$-fixed points.
We would like to show that such blow ups can be covered by the charts of the form as in (\ref{the new alg}), and that the weights satisfies what in the statement of this Lemma \ref{new bl alg}.

We show this by induction on 
the number  of blow ups of $\Spec(R[x,\lam/x])$. 
The base case, where there are no blow ups, is satisfied because we can take $z_1=x$, $z_2=\lam/x$, $a=b=1$, and we have $w(x)+w(\lam/x)=0$. 
For the inductive step, we blow up a $\Gm$-fixed point of the chart in (\ref{the new alg}). 
Since $w(z_1)$, $w(z_2)\ne 0$, the $\Gm$-fixed point has to be the maximal ideal $\lb z_1, z_2\rb$. 
The resulting blow up can be covered by the spectra of $R[z_1,\frac{z_2}{z_1}]$ and $R[\frac{z_1}{z_2},z_2]$, with $z_1^az_2^b=\lam$, i.e., the spectra of
\begin{equation}
    R[z_1',z_2']/((z_1')^{a+b}(z_2')^b-\lam)\text{ and }R[z_1'',z_2'']/((z_1'')^a(z_2'')^{a+b}-\lam).
\end{equation} 
Furthermore, we have that 
\begin{equation}
    w(z_1')=w(z_1),\;\;w(z_2')=w(z_2)-w(z_1),\;\; w(z_1'')=w(z_1)-w(z_2),\;\; w(z_2'')=w(z_2).
\end{equation}
Now it is easy to check that the weights have the desired properties in the statement of the lemma.
\end{proof}

We can now prove the main Lemma \ref{main} as a corollary of Lemma \ref{new bl alg}:

\begin{lemma}[\bf Shape of Closed Fiber of $Z_i$ over $\Spec(R)$]
\label{main}
For each $Z_i$, $0\le i\le n$, in the sequence (\ref{Z seq}), let 
\beq\la{ei}
E_i:= (Z_i\times_R\kappa)_{red}
\eeq
 be the reduction of the fiber of $Z_i\ra \Spec(R)$ over the closed point $\Spec(\kappa)\hra \Spec(R)$. We have that 
for each $0\le i\le n$,
\ben
\item 
$E_i$ is connected;
\item
the irreducible components of $E_i$ are all isomorphic to $\Proj^1_{\kappa}$;
\item
the singular points of $E_i$ are where the irreducible components of $E_i$ meet, and the singular points are all ordinary double points;
\item
all of the irreducible components of $E_i$ admit nontrivial $\Gg_{m,\kappa}$-actions so that every singular point of $E_i$, which lies in only two irreducible components of $E_i$, is the $0$-limit of one component and the $\infty$-limit of the other component;
\item
the relation $\le$ in Definition \ref{def order} gives a linear order on the set of $\Gg_{m,\kappa}$-fixed points of $E_i$.
Furthermore, there are $i+1$ $\Gg_{m,\kappa}$-fixed points, $i-1$ of which are singular, and the two regular $\Gg_{m,\kappa}$-fixed points are the unique maximal and minimal elements with respect to $\le$.
\een
\end{lemma}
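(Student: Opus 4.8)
The plan is to argue by increasing induction on $i$, feeding on the two structural inputs already available: Lemma \ref{eachz}, which says that each $p_{i}\colon Z_{i}\ra Z_{i-1}$ is the blow-up of a single $\Gg_{m,R}$-fixed closed point (necessarily lying in the special fibre, since $Z_{i-1}$ is of finite type over the DVR $R$), and Lemma \ref{new bl alg}, which provides the explicit $\Gg_{m,R}$-invariant affine charts $\Spec R[x]$ and $\Spec R[z_1,z_2]/(z_1^az_2^b-\lam)$ with their weight data. For the base case $i=0$ one has $Z_0=\Proj^1_R$, whose reduced special fibre is $E_0=\Proj^1_\kappa$ with the standard $\Gg_{m,\kappa}$-action; assertions (1)--(4) are immediate, there being no singular points, and the two fixed points $0,\infty$ are trivially the extremes of the (length-one) chain, giving (5).

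For the inductive step I would first identify the admissible blow-up centres. By the inductive hypothesis $E_{i-1}$ is a chain of copies of $\Proj^1_\kappa$ meeting transversally in ordinary double points, so its $\Gg_{m,\kappa}$-fixed closed points are exactly the nodes together with the two extreme regular points; by Lemma \ref{eachz} the centre of $p_i$ is one of these, and it is $\kappa$-rational (it is the origin $\langle x,\lam\rangle$ or $\langle z_1,z_2\rangle$ of the corresponding chart in Lemma \ref{new bl alg}). Since $Z_{i-1}$ is regular and the centre is a $\kappa$-rational point, the exceptional divisor of $p_i$ is a copy of $\Proj^1_\kappa$. The chart computation reproduced in the proof of Lemma \ref{new bl alg} then shows directly that $p_i$ either appends a new $\Proj^1_\kappa$ beyond a regular extreme point of the chain, or inserts a new $\Proj^1_\kappa$ between the two components meeting at a node; in both cases the new intersections are transverse. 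Reading off $(Z_i\times_R\kappa)_{\mathrm{red}}$ from these local models — the reduction of $\{z_1^az_2^b=0\}$ being the node $\{z_1z_2=0\}$ — establishes (1), (2), (3) for $E_i$.

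For (4) and (5) the decisive input is the sign and balance relations $w(z_1)w(z_2)<0$ and $aw(z_1)+bw(z_2)=0$ of Lemma \ref{new bl alg}, together with $w(x)\neq 0$. On each node chart these force the two branches to have opposite-signed weights, so the node is the $\lim_{t\ra 0}t\cdot(\,\cdot\,)$ of one component and the $\lim_{t\ra\infty}t\cdot(\,\cdot\,)$ of the other, and they guarantee that the induced $\Gg_{m,\kappa}$-action on every component is nontrivial; gluing the charts along the chain yields (4). For (5), these orientations are mutually compatible along the chain, so each edge realizes an elementary relation $u\le v$ of Definition \ref{def order}, and concatenation produces a linear order whose two regular extreme points are the unique minimum and maximum. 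The numerology in (5) is then pure bookkeeping: each blow-up increases the number of components, and hence the number of nodes and of $\Gg_{m,\kappa}$-fixed points, by exactly one, a chain of components having two regular extreme fixed points and one further (singular) fixed point at each node.

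The step I expect to be the main obstacle is precisely the globalization in (4)--(5): one must check that the weight orientations computed chart-by-chart glue to a \emph{globally consistent} orientation of the entire chain, so that $\le$ restricts to a genuine total order with no incomparabilities and no cycles. This is exactly where one uses that the configuration stays a chain rather than a more general tree, and where the opposite-sign condition $w(z_1)w(z_2)<0$ at each node is indispensable; propagating this sign information through each blow-up via the weight-transformation formulas in the proof of Lemma \ref{new bl alg} is the technical heart of the matter. A secondary point requiring care is the passage to $(-)_{\mathrm{red}}$: the scheme-theoretic special fibre carries the multiplicities encoded by the exponents $a,b$, and one must confirm that taking $E_i=(Z_i\times_R\kappa)_{\mathrm{red}}$ indeed returns reduced copies of $\Proj^1_\kappa$ crossing in ordinary double points, which again is legible from the explicit local equations.
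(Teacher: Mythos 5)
Your proposal is correct and follows essentially the same route as the paper: induction on $i$, reading (1)--(3) off the explicit charts $\Spec(R[x])$ and $\Spec(R[z_1,z_2]/(z_1^az_2^b-\lam))$ of Lemma \ref{new bl alg}, deducing (4) from $w(x)\neq 0$ and $w(z_1)w(z_2)<0$, and proving (5) by inserting the two new fixed points $y_j^0\le y_j^\infty$ of the exceptional divisor into the linear order inherited from $E_{i-1}$. The globalization concern you flag is handled in the paper exactly as you suggest, via the inductive ordering together with the fact that the extreme fixed points are the only ones lying on a single component.
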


\begin{rmk}[\bf Lemma \ref{main} in terms of graphs]\la{fgo0}
For each $0\le i\le n$, define a directed graph $\Gamma(E_i)$ as follows: 
with each irreducible component of $E_i$ we associate an edge,  and with each $\Gg_{m,\kappa}$-fixed point of $E_i$ we associate a vertex;
according to Lemma \ref{main}.(2),(4), each irreducible component $\mathcal{E}$ of $E_i$ contains two $\Gg_{m,\kappa}$-fixed points $e_1$ and $e_2$, which are the 0 and $\infty$ limits of the $\Gg_{m,\kappa}$-action on $\mathcal{E}$ respectively, thus we can let the vertices corresponding to $e_1$ and $e_2$ be the source and the end of the edge $ed(\mathcal{E})$ corresponding to $\mathcal{E}$ respectively, and let the direction on $ed(\mathcal{E})$ be pointing from the source to the end.
Then Lemma \ref{main} implies that $\Gamma(E_i)$ has $i+1$ vertices, and is of the form
\[\circ \ra \circ \ra ... \ra \circ.\]
\end{rmk}

\begin{proof}[Proof of Lemma \ref{main}]
Since $p_i: Z_i\ra Z_{i-1}$ is the blow up of a closed point of $Z_{i-1}$, we have that $E_i=(p_i^{-1}(E_{i-1}))_{red}$ is connected if and only if $E_{i-1}$ is connected. Since $E_0=\Proj^1_{\kappa}$ is connected, we have that each $E_i$ is connected, thus we have item (1).

From Lemma \ref{new bl alg}, we see that the fiber $Z_i\times_R\kappa$ can be covered by the affine charts that are isomorphic to the spetra of $\kappa[x]$ or $\kappa[z_1,z_2]/(z_1^az_2^b)$ for some $a,b\in \Z_{\ge 0}$ with $a+b>0$.
Therefore $E_i$ can be covered by the affine charts that are isomorphic to the spectra of $\kappa[x]$ or $\kappa[z_1,z_2]/(z_1z_2)$. Items (2) and (3) are immediate from these charts.

From the description of weights in Lemma \ref{new bl alg}, we see that $x$ has nontrivial weights, and that $w(z_1)w(z_2)<0$. Therefore we have item (4).

We prove item (5) by an induction on $i\ge 0$. The base case $i=0$ is automatic because we have that two regular $\Gg_{m,\kappa}$-fixed points $0_{\kappa}$ and $\infty_{\kappa}$ of $E_0=\Proj^1_{\kappa}$, and that $0_{\kappa}\le \infty_{\kappa}$. 
Now suppose we have proved the case $i-1$. Let us order the $i$ $\Gg_{m,\kappa}$-fixed points of $E_{i-1}$ as $z_1\le ...\le z_i$. 
Suppose $p_i: Z_i\ra Z_{i-1}$ is the blow up with center $z_j$ for some $1\le j\le i$. 
We then have that $p_i$ is a $\Gg_{m,R}$-equivariant isomorphism when restricted to the affine charts for $z_1,...,z_{j-1}, z_{j+1},...,z_i$ selected in Lemma \ref{new bl alg}.
For each $k\ne j$, let $y_k:=p^{-1}(z_k)$.
Let $y_j^0$ and $y^{\infty}_j$ be the $0$ and $\infty$ point of the exceptional divisor, which is isomorphic to $\Proj^1_{\kappa}$ by item (2), of the blow up $p_i$. 
It then follows from item (4) that we can linearly order the $\Gm$-fixed points of $E_i$ as 
$y_1\le ...\le y_{j-1}\le y_j^0\le y_j^{\infty}\le y_{j+1}\le...\le y_i$. We have thus showed the first sentence of item (5) in case $i$.
Since each of $y_1$ and $y_i$ lies in only one component of $E_i$, from Lemma \ref{new bl alg} we see that $y_1$ and $y_i$ have affine neighborhoods isomorphic to $\Spec(R[x])$, thus $y_1$ and $y_i$ are regular. 
Any $\Gg_{m,\kappa}$-fixed point of $E_i$ that is not $y_1$ or $y_i$ lies in two components of $E_i$, thus cannot be regular. We have thus showed the case $i$ of item (5).
\end{proof}

\subsection{Closedness of \texorpdfstring{$Y_+$}{Y+} and \texorpdfstring{$Y_-$}{Y-}}
\label{closed Y}$\;$

In this Section \ref{closed Y}, we prove Theorem \ref{ugm proper}.(1) which states that both $Y_-$ and $Y_+$ are closed inside $X$.

The sets $Y_+$ and $Y_-$ are constructible (see the first paragraph of Section \ref{section on Z}), thus it suffices to show that $Y_+$ and $Y_-$ are closed under specializations. 
Let $x, x'$ be two Zariksi points of $X$ so that $x'\in \overline{\{x\}}$. Assume that $x\in Y_+$. We would like to show that $x'\in Y_+$.

By \cite[Lemma II.4.4, Ex.\;II.4.11]{hartshorne}, there exists a discrete valuation ring $R$ with fraction field $L$ and residue field $\kappa$, and an $R$-point $r\in X(R)$, so that $r$ maps the generic point of $\Spec(R)$ to $x$ and the closed point of $\Spec(R)$ to $x'$. 

Take $Z=Z(r)$ as defined in Lemma \ref{lemma int Z}. 
By Lemma \ref{eachz} we have that the morphism $\pi_6: Z\ra \Proj^1_R$ is the composition of iterated $\Gg_{m,R}$-equivariant blow ups at $\Gg_{m,R}$-fixed closed point. 
Therefore $\pi_6$ restricted over the closed subscheme $1_R\subset \Proj^1_R$ is an isomorphism. 
Thus $\pi_6^{-1}(1_R)$ is an $R$-point of $Z$.
We employ the  notation as  in (\ref{el pt indet}).
Define the composition of $\Gg_{m,R}$-equivariant morphisms $\pi_7: Z\xra{\pi_5} W \xra{\pi_3} X_R \xra{p_X} X$.
We then have that $\pi_7|_{\pi_6^{-1}(1_R)}=r: \Spec(R)\ra X$. 
We then have an equality of $L$-points of $X$:
\begin{equation}
\label{pi7pi6}
    \pi_7(\pi_6^{-1}(\infty_L))= \lim_{t\ra \infty} t\cdot x.
\end{equation}

We employ  the notation in Lemma \ref{main}.
By keeping track of what happens to the affine charts containing $\infty_L$ in $\Proj^1_L$ under each blow up $p_i:Z_i\ra Z_{i-1}$ as in Lemma \ref{new bl alg}, we see that $\pi_6^{-1}(\infty_L)$ in $Z$ specializes to the maximal element $z_{n+1}$ with respect to the linear order $\le$. 
By (\ref{pi7pi6}), we have that $\pi_7(z_{n+1})\in \overline{\{\lim_{t\ra \infty} t\cdot x\}}$.
Since $x\in Y_+$, and $V_+$ is closed, we have that $\pi_7(z_{n+1})\in V_+$.
Since $\pi_7$ is $\GB$-equivariant, for any $0\le j\le n+1$, we have that $\pi_7(z_j)\le \pi_7(z_{n+1})$.
By the defining property of $V_+$, we see that $\pi_7(z_j)=\in V_+$.
In particular, we can let $z_j$ be the $\infty$-limit of $\pi_6^{-1}(1_{\kappa})$, we then have that 
\begin{equation}
    \lim_{t\ra \infty} t\cdot x'=\lim_{t\ra \infty} t\cdot r|_{\Spec(\kappa)} =\lim_{t\ra \infty} t\cdot \pi_7|_{\pi_6^{-1}(1_{\kappa})}=\pi_7(z_j)\in V_+.
\end{equation}
Therefore we have that $x'\in Y_+$, and we have proved that $Y_+$ is closed inside $X$.

The proof that $Y_-$ is closed inside $X$ is very similar to the proof above, except that we exchange $0$ and $\infty$ in the argument. We have thus proved Theorem \ref{ugm proper}.(1).

\subsection{Existence of Uniform Geometric Quotient \texorpdfstring{$U/\GB$}{U/GB}}\label{quotient ex}$\;$\\

In this section, we prove Theorem \ref{ugm proper}.(2) which states that a uniform geometric quotient $\phi: U\ra U/\GB$ exists.

\begin{proof}[Proof of Theorem \ref{ugm proper}.(2)]
By Assumption \ref{assumption p}, we can cover $U$ with $\GB$-invariant open affine subschemes $U=\bigcup_i Q_i$.
By \cite[Thm.\;3, Rmk.\;8,\;10]{seshadri}, for each $i$, there exists a uniform categorical quotient $\phi_i:Q_i\ra Q_i/\GB$ which is surjective, and that $Q_i/\GB$ is of finite type over $\Bbase$.

We first show that these $\phi_i$'s glue to form a uniform categorical quotient $\phi:U\ra U/\GB$, which is immediately reduced to showing that for every $i,j$, we have that $Q_i\cap Q_j=\phi^{-1}\phi_i(Q_i\cap Q_j)$, see the proof of \cite[Prop.\;7.9]{alper}.
Since any two closed point $q^i_1, q^i_2$ in $Q_i$ are mapped to the same point in $Q_i/\GB$
if and only if the closures (inside $Q_i$) of the orbits of $q_1^i$ and $q_2^i$ intersect nontrivially, see \cite[Thm.\;3.(ii)]{seshadri}, we are
further reduced to showing that every closed point $q^i$ in $Q_i$ has closed orbit in $Q_i$:

Since the closure of the orbit of $q^i$ in $X$ has boundaries contained in $V$, we have that the orbit of $q^i$ is closed inside $X-V$. By Theorem \ref{ugm proper}.(1), we have that $U$ is open inside $X$, thus the orbit of $q^i$ is closed inside $U$, thus closed inside $Q_i$.
Therefore, we have shown that there exists a uniform categorical quotient $\phi: U\ra U/\GB$.

We now show that $\phi$ is indeed a uniform geometric quotient: 

\cite[Thm.\;3.(iii)]{seshadri} shows that the image of any $\GB$-stable closed subscheme of $U$ is closed in $U/\GB$, which, combined with the proof of \cite[p.8, Rmk.\;(6)]{git}, shows that $\phi$ is submersive. Since $\GB$ is open over $\Bbase$, by \cite[Rmk.\;2.8.3]{kollar-1997}, we have that $\phi$ is actually universally submersive.

Checking the definition of uniform geometric quotients as in \cite[Def.\;0.6]{git}, we have that to conclude the proof, it suffices to show that for any algebraically closed field $K$ and a point $K$-point $a\in U/\GB(K)$, the fiber $\phi^{-1}(a)$ contains only one $\GB$-orbit:

If the dimension of $\phi^{-1}(a)$ is larger than 1, then by generic flatness, there exists a closed point $u\in \ov{\{a\}}$ such that $\phi^{-1}(u)$ has dimension larger than 1, which contradicts the fact that a closed point in $U$ has a closed orbit in $U$, which is established above.

Therefore, it remains to show that $\phi^{-1}(a)$ is irreducible:

Again, by \cite[Thm.\;3.(ii)]{seshadri}, any two geometric points of $U$ are mapped to the same geometric point of $U/\GB$ by $\phi$ if and only if the closures (inside $U$) of the orbits of the two geometric points intersect nontrivially.
We have seen above that any closed point $u$ in $U$ has a closed orbit in $U$, thus $\phi^{-1}(u)$ is irreducible. 
By \cite[\href{https://stacks.math.columbia.edu/tag/0553}{0553}]{stacks},
we must have that $\phi^{-1}(a)$ is irreducible for any geometric point of $U/\GB$.
We have that finished the proof.
\end{proof}

\begin{rmk}[\bf $U/\GB$ is a tame and good moduli space]
\label{tamegood} 
Let us verify that, by using the terminology as in \cite{alper}, the uniform geometric quotient $U/\GB$ is a tame and good moduli space for the quotient stack $[U/\GB]$:

Since a geometric point in $(X-V)(K)$, where $K$ is an algebraically closed field, has closed orbits in $(X-V)_K$, we have that $X-V$ is the prestable locus for the $\GB$-action on $X$, see \cite[Def.\;10.1]{alper}.
By \cite[Prop.\;11.4]{alper}, there exists a tame and good moduli space $[(X-V)/\GB]\ra (X-V)/\GB$.
By Theorem \ref{ugm proper}.(1), we have that $U$ is open inside $X-V$.
Thus $[U/\GB]$ is an open substack of $[(X-V)/\GB]$, see \cite[Rmk.\;2.3.1]{heinloth}.
Therefore, by \cite[Rmk.\;7.3, Prop.\;7.10]{alper}, we have that $[U/\GB]\ra U/\GB$ is also a tame and good moduli space.
\end{rmk}

\subsection{Universal Closedness of \texorpdfstring{$U/\GB\ra S$}{U/GB-S}}$\;$
\label{proof of uni cl}

In this section, we prove Theorem \ref{ugm proper}.(3) which states that the morphism $U/\GB\ra S$ is universally closed.

\begin{proof}[Proof of Theorem \ref{ugm proper}.(3)]$\;$

Let us start with the following commutative  diagram, where $R$ is a discrete valuation ring, and $L$ is its fraction field
\begin{equation}
\label{kugm}
\xymatrix{
\Spec(L) \ar[r]^-{\xi} \ar[d] & U/\GB \ar[d]\\
\Spec(R) \ar[r] & S.
}
\end{equation}

Let $U^L:=U\times_{U/\GB} \Spec(L)$.
Since $U\ra U/\GB$ is a geometric quotient, by \cite[Def.\;0.6.(ii)]{git}, we have that the orbit morphism $\mu_{\xi}: \Gg_{m,L}\ra U$ factors through a surjective morphism $\mu_{\xi}':\Gg_{m,L}\ra U^L$. Define $\ah_1:=\mu_{\xi}'(1_L)$.
The morphisms $\Gg_{m,L}\xra{\mu_{\xi}'} U^L\ra \Spec(L)$ give field extensions $L\subset k(\ah_1)\subset L$.
Therefore we have that $\ah_1\in U^L(L)$.
We now have the following commutative diagram:
\begin{equation}
\label{LUK}
    \xymatrix{
& U^L \ar[d]_-{\ah_2} \ar[rr] && \Spec(L)\ar@/_/[ll]_{\ah_1} \ar[r] \ar[d]^-{\xi} \ar[dll]_{\eta}& \Spec(R) \ar[d]\\
X & U \ar@{_{(}->}[l] \ar[rr] && U/\GB \ar[r] & S.
    }
\end{equation}

The composition $\ah_1\circ \ah_2=:\eta$ defines an $L$-point $\eta\in U(L)$.
By the properness of $X/S$, we have that $\eta$ induces an $R$-point $r\in X(R)$ filling in the commutative diagram (\ref{LUK}).

Let $Z=Z(r)$ be as defined in Lemma \ref{lemma int Z}.
Recall that $\pi_6: Z\ra \Proj^1_R$ is the composition of iterated $\Gg_{m,R}$-equivariant blow ups 
$p_i:Z_i\ra Z_{i-1}$ at $\Gg_{m,R}$-fixed points of $Z_{i-1}$.
From Lemma \ref{new bl alg}, we see that the centers of all blow ups are in the closed fiber $Z_{i-1}\times_R \kappa$.
Therefore we have that $\pi_6$ is an isomorphism when restricted over the open subscheme $\Proj^1_L\subset \Proj^1_R$. 
Recall that we have the composition of $\Gg_{m,R}$-equivariant morphism $\pi_7: Z\xra{\pi_5} W \xra{\pi_3} X_R \xra{p_X} X$. We have that $\pi_7|_{\pi_6^{-1}(1_L)}=\eta: \Spec(L)\ra X$, and we have the equalities of $L$-points of $X$:
\begin{equation}
\label{pi7pi6 twice}
    \pi_7(\pi_6^{-1}(\infty_L))=\lim_{t\ra \infty} t\cdot \eta, \text{ and } \pi_7(\pi_6^{-1}(0_L))=\lim_{t\ra 0} t\cdot \eta.
\end{equation}

Since $\eta\in U(L)$, we have that $\lim_{t\ra \infty} t\cdot \eta\in V_-(L)$ and $\lim_{t\ra 0} t\cdot \eta\in V_+(L)$. 
We employ  the notation in Lemma \ref{main}. 
By keeping track of what happens to the affine charts containing $0_L$ and $\infty_L$ under each blow up as in Lemma \ref{new bl alg}, we see that $\pi_6^{-1}(\infty_L)$ specializes to the maximal element $z_{n+1}$ with respect to the linear order $\le$ in Definition \ref{def order}, and that $\pi_6^{-1}(0_L)$ specializaes to the minimal element $z_1$ with respect to $\le$. 
By (\ref{pi7pi6 twice}), we have that $\pi_7(z_{n+1})\in \overline{V_-}=V_-$, and that $\pi_7(z_1)\in \overline{V_+}=V_+$. 
Thus there exists a smallest number $1\le j\le n$ so that $\pi_7(z_j)\in V_+$ and $\pi_7(z_{j+1})\in V_-$.
In particular, we have that for any $j'<j$, both $\pi_7(z_{j'})$ and $\pi_7(z_{j'+1})$ are in $V_+$.
Also, for any $j''>j$, we have that $z_{j''}\ge z_{j+1}$, thus both $\pi_7(z_{j''})$ and $\pi_7(z_{j''+1})$ are in $V_-$.
Therefore $j$ is the unique number so that $\pi_7(z_j)\in V_+$ and $\pi_7(z_{j+1})\in V_-$.

Let $\mathcal{E}$ be the irreducible component of $E_n$ that contains both $z_j$ and $z_{j+1}$.
By Lemma \ref{main}, there is an isomorphism $iso: \mathcal{E}\xra{\sim} \Proj^1_{\kappa}$ sending $z_j$ to $0_{\kappa}$ and $z_{j+1}$ to $\infty_{\kappa}$. 
Let $\delta_{\kappa}$ be the $\kappa$-point of $\mathcal{E}$ so that $iso(\delta_{\kappa})=1_{\kappa}$.
By \cite[Lemma 8.3.35.(a)]{liu}, there exists a Weil divisor $\Delta$ of $Z$ that maps surjectively onto $\Spec(R)$ under the projection $Z\ra \Spec(R)$, and contains $\delta_{\kappa}$.
Therefore the fraction field $L'$ of the generic point of $\Delta$ is a finite field extension of $L$.
Let $R'\subset L'$ be a DVR dominating $R$ and let $\delta_{R'}\in Z(R')$, so that $\delta_{R'}$ sends the closed point of $\Spec(R')$ to $\delta_{\kappa}\in E_n(\kappa)$, and the generic point of $\Spec(R')$ to the generic point of $\Delta$.
Let $\delta_{L'}$ be the restriction of $\delta_{R'}$ to the generic point $\Spec(L')\subset \Spec(R')$. 
Since $\pi_7(\delta_{\kappa})\in X(\kappa)$ has 0-limit in $V_+$ and $\infty$-limit in $V_-$, we have that $\pi_7(\delta_{\kappa})\in U(\kappa)$. 
Therefore we have that $\delta_{L'}\in \pi_6^{-1}(\Gg_{m,L})(L')$,
so $\delta_{L'}$ and $\pi_6^{-1}(1_L)$ are in the same $\Gg_{m,L'}$-orbit, thus $\pi_7(\delta_{L'})$ and $\pi_7(\pi_6^{-1}(1_L))=\eta$ are in the same $\Gg_{m,L'}$-orbit in $X$.
Since $U$ is $\GB$-invariant, and  $\eta\in U(L)$, we have that $\pi_7( \delta_{L'})\in U(L')$.
Therefore we have that $\pi_7 (\delta_{R'})\in U(R')$.

We have found $\pi_7( \delta_{L'})\in U(L')$ which is in the same $\Gg_{m,L'}$ orbit as $\eta$, and which can be extended into $\pi_7( \delta_{R'})\in U(R')$. 
In particular, under the quotient morphism $U\ra U/\GB$, we have that $\pi_7( \delta_{L'})$ is sent to $\xi \in U/\Gg_{m,B}(L)$ that appears in our starting diagram (\ref{kugm}).  
 
Consider  the composition $\xi_{R'}: \Spec(R')\xra{\pi_7\circ \delta_{R'}} U \ra U/\Gm$. We  have the following commutative diagram:
\begin{equation}
\label{L1Kxi}
    \xymatrix{
    \Spec(L') \ar[r] \ar[d] & \Spec(L) \ar[r]^-{\xi} & U/\GB\ar[d]\\
    \Spec(R') \ar[urr]^-{\xi_{R'}} \ar[r] & \Spec(R) \ar[r] & S.
    }
\end{equation}

By the valuative criterion in Lemma \ref{valuative uni cl}, we have the universal closedness of the morphism $U/\GB\ra S$.
\end{proof}

\subsection{Separatedness of \texorpdfstring{$U/\GB\ra S$}{U/GmB-S}}
\label{proof of sep}$\;$

In this Section \ref{proof of sep}, we prove Theorem \ref{ugm proper}.(4) which states that $U/\GB\ra S$ is separated.

One word about notation: recall that we have $U_L=U\times_S \Spec(L)$ and $U^L=U\times_{U/\GB} \Spec(L)$.
In this section, we always use upper scripts, such as $U^L$, $U^R$, to denote fiber products over $U/\GB$; while lower scripts, such as $U_L$, $U_R$, $\Proj^1_L$, denote fiber products over $S$ or $B$.

We also employ the notation used  in Section \ref{proof of uni cl}.

Suppose the morphism $\xi: \Spec(L)\ra U/\GB$ in (\ref{kugm}) can be extended to a morphism $\xi_0: \Spec(R)\ra U/\GB$.
The diagram (\ref{LUK}) now becomes
\begin{equation}
\label{lukxi}
    \xymatrix{
   & U^L \ar[d]_-{\ah_2} \ar[rr] && \Spec(L)\ar@/_/[ll]_{\ah_1} \ar[r] \ar[d]^-{\xi} \ar[dll]_{\eta} & \Spec(R) \ar[d] \ar[dl]_-{\xi_0}\\
X & U \ar@{_{(}->}[l] \ar[rr] && U/\GB \ar[r] & S.
    }
\end{equation}

Below we prove the separatedness of $U/\GB\ra S$ by showing that the natural $R'$-point of $U/\GB$ induced by $\xi_0\in (U/\GB)(R)$ coincides with the $R'$-point $\xi_{R'}$ in (\ref{L1Kxi}) in Section \ref{proof of uni cl}.
We have the following diagram where every square is Cartesian:
\begin{equation}
    \xymatrix{
    U^L\ar[r]^-{\beta_3} \ar[d] & U^R \ar[r]^-{\beta_1} \ar[d] & U \ar[d]^-{\phi} & U^R \ar[l]_-{\beta_1} \ar[d] & U^{\kappa}\ar[l]_-{\beta_2} \ar[d]\\
    \Spec(L) \ar[r] & \Spec(R) \ar[r]_-{\xi_0} & U/\GB & \Spec(R)\ar[l]^-{\xi_0} 
    &\Spec(\kappa).\ar[l]_{i}
    }
\end{equation}

\begin{lemma}
\label{irreducible ur}
The fiber products $U^{\kappa}$, $U^L$, and $U^R$ are all irreducible.
\end{lemma}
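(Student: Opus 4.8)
The plan is to handle the three schemes separately: the two fibres $U^{L}$ and $U^{\kappa}$ reduce immediately to a fact about the geometric fibres of the quotient map $\phi\colon U\to U/\GB$ that was already isolated in the proof of Theorem \ref{ugm proper}.(2), while $U^{R}$ is obtained from these by a density argument.

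First, for $U^{L}$ and $U^{\kappa}$ I would argue as follows. The morphism $\Spec(L)\to U/\GB$ factors through a single scheme-point $q\in U/\GB$, with $k(q)\hookrightarrow L$, so that $U^{L}=\phi^{-1}(q)\times_{k(q)}L$; likewise $U^{\kappa}=\phi^{-1}(q')\times_{k(q')}\kappa$ for the point $q'$ image of $\Spec(\kappa)$. In the proof of Theorem \ref{ugm proper}.(2) it is shown that for every algebraically closed field $K$ and every $K$-point $a$ of $U/\GB$ the fibre $\phi^{-1}(a)$ is irreducible, being a single $\GB$-orbit. Applying this with $K=\ov{k(q)}$ shows that $\phi^{-1}(q)$ is geometrically irreducible; since geometric irreducibility is preserved under arbitrary field extension, both $U^{L}$ and $U^{\kappa}$ are irreducible. (Each fibre is also nonempty, because $\phi$ is surjective.)

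For $U^{R}$ I would first record that $U^{L}$ is an open subscheme of $U^{R}$, namely the base change of the open immersion $\Spec(L)\hookrightarrow\Spec(R)$, and that $U^{\kappa}$ is its closed complement. Hence $U^{R}$ is irreducible if and only if the nonempty irreducible open subset $U^{L}$ is dense, i.e. if and only if $U^{\kappa}\subseteq\ov{U^{L}}$, where $\ov{U^{L}}$ is the closure of $U^{L}$ in $U^{R}$. The closure $\ov{U^{L}}$ is irreducible, and it is $\GB$-stable because $U^{L}=\phi^{-1}(q)$ is $\GB$-stable ($\phi$ being $\GB$-invariant) and translation by an element of the connected group $\GB$ preserves closures. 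Therefore $G:=\ov{U^{L}}\cap U^{\kappa}$ is a closed $\GB$-stable subscheme of $U^{\kappa}$. By the previous paragraph $U^{\kappa}$ is, geometrically, a single $\GB$-orbit, and such a scheme has no nonempty proper closed $\GB$-stable subscheme: after base change to $\ov{\kappa}$, any nonempty closed $\GB$-stable subset contains a point and hence the entire orbit through it, so it is everything. Thus it suffices to prove $G\neq\emptyset$, and then $G=U^{\kappa}$, whence $U^{\kappa}\subseteq\ov{U^{L}}$ and $U^{R}$ is irreducible.

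The nonemptiness of $G$ — equivalently, the existence inside $U^{R}$ of a point of the generic fibre $U^{L}$ specializing into the special fibre $U^{\kappa}$ — is the main obstacle, since the quotient map $\phi$ is neither proper nor flat. I would supply it by a specialization-lifting input rather than by hand. Concretely, the good and tame moduli space structure recorded in Remark \ref{tamegood} makes the coarse map $[U/\GB]\to U/\GB$ universally closed, so its base change $[U/\GB]\times_{U/\GB}\Spec(R)\to\Spec(R)$ is proper; applying the valuative criterion to the $L$-point of this stack determined by $\eta\in U(L)$ yields, after a finite extension $R\subseteq R'$, a point of $U$ over the closed point lying in the orbit over $q'$, i.e. a specialization in $U^{R'}=U^{R}\times_{R}R'$ from the generic to the special fibre; pushing it forward along the surjection $U^{R'}\to U^{R}$ produces the desired point of $\ov{U^{L}}\cap U^{\kappa}$. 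Alternatively, and staying entirely within this paper, the resolution $Z(r)$ of \S\ref{section on Z} together with the point $\delta_{R'}$ constructed as in \S\ref{proof of uni cl} furnishes such a specialization directly. Once this lifting is in hand, the conclusion is formal from the single-orbit description of $U^{\kappa}$.
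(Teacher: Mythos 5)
Your proof is correct along its primary route, but for the irreducibility of $U^R$ it takes a genuinely different path from the paper's; the treatment of $U^L$ and $U^{\kappa}$ (geometric fibers of $\phi$ are single $\GB$-orbits, hence geometrically irreducible, hence irreducible after any field extension) is the same as the paper's, just with more detail. Both arguments reduce the $U^R$ statement to showing $U^{\kappa}\subseteq\ov{U^L}$, but they diverge on the two substeps. For non-emptiness of $\ov{U^L}\cap U^{\kappa}$, the paper uses only that $\phi$ is universally submersive (Koll\'ar), so that $U^R\to \Spec(R)$ is submersive, $U^{\kappa}$ is not open, and $U^L$ is not closed; you instead invoke the universal closedness of the good moduli space morphism $[U/\GB]\to U/\GB$ of Remark \ref{tamegood}, the valuative criterion, and the triviality of $\Gm$-torsors over a valuation ring. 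Your input is heavier, but it produces an honest specialization from $U^L$ into $U^{\kappa}$ that is compatible with $\xi_0$ by construction, which is all you need. To upgrade non-emptiness to all of $U^{\kappa}$, the paper uses upper semicontinuity of fiber dimension ($\ov{U^L}\cap U^{\kappa}$ cannot be zero-dimensional because the generic fiber of $\ov{U^L}\to\Spec(R)$ is one-dimensional, hence it is one-dimensional, hence it equals the irreducible one-dimensional $U^{\kappa}$); you use instead that $U^{\kappa}$ is geometrically a single orbit and that $\ov{U^L}$ is $\GB$-stable, so a non-empty closed stable subset of it must be everything. Both upgrades are valid; yours avoids the dimension count, the paper's avoids discussing stability of closures.

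One caveat: your second, ``internal'' alternative for the non-emptiness step does not work as stated. The point $\pi_7(\delta_{R'})\in U(R')$ of \S\ref{proof of uni cl} does give a specialization inside $U$ from the orbit over $q$, but to place its closed point $\pi_7(\delta_{\kappa})$ in $U^{\kappa}=\phi^{-1}(q')$ you would need $\phi\circ\pi_7(\delta_{R'})$ to agree with $\xi_0|_{R'}$ at the closed point of $\Spec(R')$; that agreement of two extensions of $\xi$ is exactly what the separatedness argument of \S\ref{proof of sep} (which relies on the present lemma) is designed to establish, so this route is circular. Keep the good-moduli-space argument, or fall back on the paper's submersiveness argument.
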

\begin{proof}
Since a geometric fiber of $\phi$ is a $\GB$-orbit, we have the irreducibility of $U^{\kappa}$ and $U^L$.
We now show that $U^R$ is irreducible.
Since $\GB$ is open over $\Bbase$, by \cite[Rmk.\;2.8.3]{kollar-1997}, we have that $U\ra U/\GB$ is universally submersive, thus the morphism $U^R\ra \Spec(R)$ is submersive. Therefore $U^{\kappa}$ is not open. Thus $U^L$ is not closed. Let $\overline{U^L}$ be the closure of $U^L$ inside $U^R$. If $\overline{U^L}\cap U^{\kappa}$ is a zero dimensional closed subscheme of $U^{\kappa}$, then the morphism $\overline{U^L}\ra \Spec(R)$ violates the upper semicontinuity of dimensions of fibers at the source \cite[Ex III.12.7.2]{hartshorne}. Therefore we must have $\overline{U^L}\cap U^{\kappa}$ is one dimensional. Since $U^{\kappa}$ is irreducible, we have that $\overline{U^L}\cap U^{\kappa}=U^{\kappa}$, and that $U^R=\overline{U^L}$ is irreducible.
\end{proof}

\begin{lemma}
\label{image same}
The set theoretic image of $\beta_1\circ \beta_2: U^{\kappa}\ra U$ is contained in the set theoretic image of the restriction of $\pi_7: Z\ra X$ to the closed subscheme $E_n\subset Z$.
\end{lemma}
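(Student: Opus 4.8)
The plan is to funnel the comparison through the proper $R$-morphism $g:=\pi_3\circ\pi_5\colon Z\to X_R$, for which $\pi_7=p_X\circ g$ by the definition of $\pi_7$ in (\ref{el pt indet}). First I record that $g$ is proper: $\pi_5\colon Z\to W$ is proper by Lemma \ref{lemma int Z}, and $\pi_3\colon W\to X_R$ is proper because $W$ is closed in $\Proj^1_R\times_S X=\Proj^1_R\times_{\Spec(R)}X_R$ and the projection of the latter onto $X_R$ is the base change of the proper morphism $\Proj^1_R\to\Spec(R)$. Put $\mathcal{O}_L:=\pi_6^{-1}(\Gg_{m,L})\subseteq Z$; since $\pi_6$ is an isomorphism over $\Proj^1_L$, this is a nonempty open, hence dense, subset of the integral scheme $Z$, so $\overline{\mathcal{O}_L}=Z$. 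As $g$ is proper, $g(Z)$ is closed and equals $\overline{g(\mathcal{O}_L)}$; and since $g$ is a morphism over $\Spec(R)$ while $E_n$ is the closed fibre of $Z\to\Spec(R)$, writing $X_\kappa:=X\times_S\Spec(\kappa)$ for the closed fibre of $X_R\to\Spec(R)$ we obtain
\[
g(E_n)=g(Z)\cap X_\kappa=\overline{g(\mathcal{O}_L)}\cap X_\kappa .
\]

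Next I identify the generic parts on the two sides. Let $q\colon U^R\to\Spec(R)$ be the structural morphism and $\gamma:=(\beta_1,q)\colon U^R\to X_R$ the induced $S$-morphism, so that $p_X\circ\gamma=\beta_1$. On one hand, the image of $U^L$ in $X$ under $U^L\xra{\beta_3}U^R\xra{\beta_1}U$ is the $\GB$-orbit $\mathcal{O}_\eta$ of $\eta$, because $U^L$ is a single $\GB$-orbit containing $\eta$; as $U^L$ lies over the generic point of $\Spec(R)$, the image $\gamma(U^L)$ is $\mathcal{O}_\eta$ viewed inside the generic fibre $X_L\subset X_R$. On the other hand, $\pi_7|_{\pi_6^{-1}(1_L)}=\eta$ together with the $\GB$-equivariance of $\pi_6$ and $\pi_7$ shows that $\pi_7(\mathcal{O}_L)=\mathcal{O}_\eta$, so $g(\mathcal{O}_L)$ is this same orbit inside $X_L$. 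Hence $\gamma(U^L)=g(\mathcal{O}_L)$ as subsets of $X_R$, and therefore $\overline{\gamma(U^L)}=\overline{g(\mathcal{O}_L)}=g(Z)$.

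Finally I bring in irreducibility. By Lemma \ref{irreducible ur}, $U^R$ is irreducible and $U^L$ is dense in it, so continuity of $\gamma$ gives $\gamma(U^R)\subseteq\overline{\gamma(U^L)}=g(Z)$. The set-theoretic image $\beta_2(U^\kappa)$ is the closed fibre of $q$, hence contained in $U^R$ and carried by $\gamma$ into $X_\kappa$; combining these,
\[
\gamma(\beta_2(U^\kappa))\subseteq g(Z)\cap X_\kappa=g(E_n).
\]
Applying $p_X$ and using $p_X\circ\gamma=\beta_1$ and $\pi_7=p_X\circ g$ yields $(\beta_1\circ\beta_2)(U^\kappa)=p_X(\gamma(\beta_2(U^\kappa)))\subseteq p_X(g(E_n))=\pi_7(E_n)$, as claimed. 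I expect the one genuinely delicate point to be the identification $\gamma(U^L)=g(\mathcal{O}_L)$ of the two generic orbits of $\eta$; once their closures coincide, properness of $g$ automatically confines the special-fibre image to $\pi_7(E_n)$.
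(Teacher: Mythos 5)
Your proof is correct, but it is organized quite differently from the paper's. The paper argues pointwise: for a closed point $u\in U^{\kappa}$ it uses Lemma \ref{irreducible ur} (through $\dim \mathcal{O}_{U^R,u}=2$ and the proof of \ci[Lemma 3.35]{liu}) to produce a DVR $R'$ dominating $R$ and a point $\gam_{R'}\in U^R(R')$ through $u$, lifts its generic point to $\Gg_{m,L}(L'')$ via the surjection $\Gg_{m,L}\twoheadrightarrow U^L$, extends to a point of $Z(R'')$ by properness of $Z\ra \Spec(R)$, and concludes from the separatedness of $X$ that the two resulting $R''$-points of $X$ agree at the closed point. You replace the chain of auxiliary valuation rings by a single global closure argument: properness of $g=\pi_3\circ\pi_5: Z\ra X_R$ makes $g(Z)$ closed and equal to $\ov{g(\pi_6^{-1}(\Gg_{m,L}))}$, while irreducibility of $U^R$ with $U^L$ dense (again Lemma \ref{irreducible ur}) gives $\gamma(U^R)\subseteq \ov{\gamma(U^L)}$, and intersecting with the closed fibre finishes the proof. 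Both proofs rest on the same two inputs --- Lemma \ref{irreducible ur} and the identification of the generic orbit through $\Gg_{m,L}\twoheadrightarrow U^L$ --- so the difference is one of packaging; your version avoids the extensions $R\subset R'\subset R''$ and the appeal to separatedness of $X$, at the cost of having to check properness of $g$ (which is fine: $W$ is a scheme-theoretic image, hence closed in $\Proj^1_R\times_S X$, and the projection to $X_R$ is proper). The one step to tighten is exactly the one you flag: saying that $\gamma(U^L)$ is ``$\mathcal{O}_\eta$ viewed inside $X_L$'' is not by itself meaningful, because a subset of $X$ does not determine a subset of $X_L$ (the fibres of $X_L\ra X$ need not be singletons). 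The correct justification is that $\gamma|_{U^L}$ precomposed with the surjection $\Gg_{m,L}\ra U^L$, and $g|_{\pi_6^{-1}(\Gg_{m,L})}$ precomposed with the isomorphism $\Gg_{m,L}\cong \pi_6^{-1}(\Gg_{m,L})$, are literally the same morphism $\Gg_{m,L}\ra X_R$, namely $\mu_\eta$ paired with the structure map to $\Spec(L)$; hence the two images, and therefore their closures, coincide. With that point spelled out, the argument is complete.
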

\begin{proof}
By Lemma \ref{irreducible ur}, we have that for any closed point $u$ of $U^{\kappa}$, $\dim \mathcal{O}_{U^R,u}=2$.
The proof of \cite[Lemma 3.35]{liu} shows that there exists a finite field extension $L'\supset L$, a discrete valuation ring $R'\subset L'$ dominating $R$, and $\gam_{R'}\in U^R(R')$, so that $\gam_{R'}$ sends the closed points of $\Spec(R')$ to $u$. 

Let $\gam_{L'}$ be the restriction of $\gam_{R'}$ to the generic point $\Spec(L')\subset \Spec(R')$.
Since $U\ra U/\GB$ is a geometric quotient, by \cite[Def.\;0.6.(ii)]{git}, we have that the orbit morphism $\mu_{\eta}:\Gg_{m,L}\ra U$ factors through a surjective morphism $g: \Gg_{m,L}\ra U^L$. Therefore we have a finite field extension $L'\subset L''$ and $\gam_{L''}\in \Gg_{m,L}(L'')$ so that $g(\gam_{L''})=\gam_{L'}\in U^L$.
Let $R''\subset L''$ be a valuation ring that dominates $R'\subset L'$.
We can identify $\Gg_{m,L}$ with $\pi^{-1}_6(\Gg_{m,L})\subset Z$ since $\pi_6$ is an isomorphism over $\Gg_{m,L}\subset \Proj^1_R$.
Since $Z\ra \Spec(R)$ is proper, we can extend $\gam_{L''}\in \Gg_{m,L}(L'')$ to $\gam_{R''}\in Z(R'')$.
Let $\kappa''$ be the residue field of $R''$.
Let $\gam_{\kappa''}$ be the restriction of $\gam_{R''}$ to $\Spec(\kappa'')$.

By construction we have that $\pi_7(\gam_{L''})=\beta_1\circ \beta_2 (\gam_{L'})$.
Since $X/k$ is separated, we have that $\pi_7(\gam_{\kappa''})=\beta_1\circ \beta_2(u)$.
\end{proof}

\begin{proof}[End of Proof of Theorem \ref{ugm proper}.(4)]$\;$

From Lemma \ref{image same}, we see that the set theoretic image of $\beta_1\circ \beta_2: U^{\kappa}\ra U$ lies in the set theoretic image of the restriction of $\pi_7: Z\ra X$ to the unique irreducible component $\mathcal{E}$ of $E_n$ that lies between $z_j$ and $z_{j+1}$.
Therefore, the images of all the $\kappa$-points of $U^{\kappa}$ under $\beta_6\circ \beta_7$ are in the same $\Gg_{m,\kappa}$-orbit as $\pi_7(\delta_{\kappa})$:  recall that $\delta_{\kappa}$ is defined in Section \ref{proof of uni cl} as the closed point $\mathcal{E}$ that is mapped to $1_{\kappa}$ under the natural isomorphism $\mathcal{E}\ra \Proj^1_{\kappa}$.
Furthermore, recall that the image of $\pi_7(\delta_{\kappa})$ under the quotient $U\ra U/\Gm$ is the closed point of the filling 
$\xi_{R'}\in U/\Gm(R')$ selected in the end of Section \ref{proof of uni cl}.
Therefore, we have that the composition $\Spec(R')\ra \Spec(R)\xra{\xi_0} U/\Gm$ agrees with $\delta_{\xi'}: \Spec(R')\ra U/\Gm$ on both the generic and closed points.
Therefore we have a factorization
\begin{equation}
    \xi_{R'}: \Spec(R')\ra \Spec(R) \xra{\xi_0} U/\Gm.
\end{equation}
Since $\xi_{R'}$ is fixed, we have that the lift $\xi_0\in U/\Gm(R)$ of $\xi\in U/\Gm(\xi)$ is unique, thus we have the separatedness of $U/\Gm\ra S$.

The proof of Theorem \ref{ugm proper}.(4), and hence of  the Compactification Theorem I \ref{ugm proper}, is now complete.
\end{proof}

\section{Projectivity of compactifications in Non Abelian Hodge Theory}\la{ss main rz}$\;$

In this section, we apply the compactification/projectivity results of the Projectivity Theorem  III \ref{proj tm}
to first prove Projective Completion Theorems \ref{cpt tm hod} (Hodge), \ref{cpt tm dr} (de Rham)  and  \ref{cpt tm dol} (Dolbeault).  
We focus on the Hodge and Dolbeault picture, since Theorem  \ref{cpt tm dr}  is an immediate consequence
of  the Hodge picture  (take the fiber over  $t=1_{{\mathbb A}^1}$ in Theorem \ref{cpt tm hod}).

\subsection{Some more preparatory Lemmata}$\;$

In this section, we first prove some preparatory Lemmata \ref{lemma frfr}, \ref{frfactor}, \ref{dolfacle} and \ref{dreq} that are stated in \S\ref{rt51}.

Let $J$ be a field of characteristic $p>0$.
Let $C/B$ be the smooth curve as in (\ref{smcurve}).
Recall that we have fixed the rank $r$ and the degree $d$ for the Hodge, Dolbeault, and de Rham moduli spaces.
In the present section, we adopt the following abbreviations:
Let $A$ be the Hithcin base $A(C/\Bbase)$ for the curve $C/\Bbase$.
Let $A^{(\Bbase)}$ be the relative Frobenius twist of $A$.
Let $A'$ be the Hitchin base $A(C^{(B)}/\Bbase)$ for the curve $C^{(\Bbase)}/\Bbase$.

\begin{lemma}[\bf $A^{(\Bbase)}\cong A'$]
\label{lemma frfr}
There exists an isomorphism of $\Bbase$-schemes $A^{(\Bbase)}\cong A'$.
\end{lemma}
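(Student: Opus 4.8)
The plan is to realise both $A^{(\Bbase)}$ and $A'$ as total spaces of explicit locally free sheaves on $\Bbase$ and to reduce the asserted isomorphism to an isomorphism of those sheaves. Set $\mathcal{E}_i:=\pi_*\omega_{C/\Bbase}^{\otimes i}$ and $\mathcal{E}:=\bigoplus_{i=1}^r\mathcal{E}_i$, so that by definition $A=A(C/\Bbase)$ is the total space $\underline{\Spec}_{\Bbase}(\mathrm{Sym}\,\mathcal{E})$ (the precise convention relating a locally free sheaf to its associated vector bundle is immaterial here, as every operation below commutes with pullback along $fr_{\Bbase}$). The Frobenius twist is the base change $A^{(\Bbase)}=A\times_{\Bbase,fr_{\Bbase}}\Bbase$, and forming the relative spectrum of a symmetric algebra commutes with base change; hence $A^{(\Bbase)}=\underline{\Spec}_{\Bbase}(\mathrm{Sym}\,fr_{\Bbase}^*\mathcal{E})$. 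Likewise $A'=A(C^{(\Bbase)}/\Bbase)$ is the total space of $\mathcal{E}':=\bigoplus_{i=1}^r\mathcal{E}_i'$, where $\mathcal{E}_i':=\pi^{(\Bbase)}_*\omega_{C^{(\Bbase)}/\Bbase}^{\otimes i}$. Thus it suffices to produce, compatibly in $i$, isomorphisms of $\mathcal{O}_{\Bbase}$-modules $fr_{\Bbase}^*\mathcal{E}_i\cong\mathcal{E}_i'$.

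Two inputs give this. First, in the Cartesian square defining the Frobenius twist,
\begin{equation*}
\xymatrix{
C^{(\Bbase)}\ar[r]^-{\sigma_C}\ar[d]_-{\pi^{(\Bbase)}} & C\ar[d]^-{\pi}\\
\Bbase\ar[r]^-{fr_{\Bbase}} & \Bbase,
}
\end{equation*}
the relative sheaf of differentials, hence the relative dualizing sheaf (as $\pi$ is smooth of relative dimension one), commutes with base change, so $\omega_{C^{(\Bbase)}/\Bbase}^{\otimes i}\cong\sigma_C^*\omega_{C/\Bbase}^{\otimes i}$. Second, I claim that $\pi_*\omega_{C/\Bbase}^{\otimes i}$ is locally free and that its formation commutes with arbitrary base change; granting this and applying it along $fr_{\Bbase}$,
\begin{equation*}
fr_{\Bbase}^*\mathcal{E}_i=fr_{\Bbase}^*\pi_*\omega_{C/\Bbase}^{\otimes i}\cong\pi^{(\Bbase)}_*\sigma_C^*\omega_{C/\Bbase}^{\otimes i}\cong\pi^{(\Bbase)}_*\omega_{C^{(\Bbase)}/\Bbase}^{\otimes i}=\mathcal{E}_i'.
\end{equation*}
Summing over $i$ and applying $\underline{\Spec}_{\Bbase}\,\mathrm{Sym}$ then yields $A^{(\Bbase)}\cong A'$.

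The single point needing care — and the only genuine obstacle — is the base-change claim for $\pi_*\omega_{C/\Bbase}^{\otimes i}$, precisely because $fr_{\Bbase}$ need not be flat ($\Bbase$ is not assumed regular), so naive flat base change is unavailable. I would instead deduce it from cohomology and base change. Since $\pi$ is proper and $\omega_{C/\Bbase}^{\otimes i}$ is flat over $\Bbase$, the complex $R\pi_*\omega_{C/\Bbase}^{\otimes i}$ is represented, locally on $\Bbase$, by a two-term complex of vector bundles $[\mathcal{V}^0\xra{d}\mathcal{V}^1]$ whose cohomology computes that of $\omega^{\otimes i}$ on the fibres after any base change. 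On a geometric fibre $C_b$ the integers $h^0(C_b,\omega_{C_b}^{\otimes i})$ and $h^1(C_b,\omega_{C_b}^{\otimes i})$ depend only on $i$ and on the genus of $C_b$ (by Riemann--Roch and Serre duality on the smooth curve $C_b$), and the genus is locally constant over $\Bbase$; hence these dimensions are locally constant in $b$. Consequently the fibrewise rank of $d$ is locally constant, so $\ker d=\pi_*\omega_{C/\Bbase}^{\otimes i}$ and $\operatorname{coker}d=R^1\pi_*\omega_{C/\Bbase}^{\otimes i}$ are locally free and their formation commutes with arbitrary base change, which is exactly what is needed. The only degree where $R^1$ survives is $i=1$, but this causes no trouble in the argument; the remaining steps — commutation of $\mathrm{Sym}$, of relative spectra, and of $\Omega^1$ with base change — are formal.
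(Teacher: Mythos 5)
Your strategy is sound and arrives at the same reduction as the paper — everything comes down to identifying $fr_{\Bbase}^*\,\pi_*\omega_{C/\Bbase}^{\otimes i}$ with $\pi^{(\Bbase)}_*\omega_{C^{(\Bbase)}/\Bbase}^{\otimes i}$ — but you get there by a genuinely different route. The paper's proof is an explicit local coordinate computation: it writes the curve locally as $\Spec(R[x])$, takes $dx$ as a generator of $\omega_{C/\Bbase}$, and matches generators of the two symmetric algebras via $f\partial_x\otimes 1\mapsto (f\otimes 1)\partial_{x\otimes 1}$. Your argument is global and functorial (compatibility of $\Omega^1$, $\mathrm{Sym}$, duals and relative $\Spec$ with base change), and it isolates the only non-formal input, which the paper's local model glosses over: commutation of $\pi_*\omega_{C/\Bbase}^{\otimes i}$ with the non-flat morphism $fr_{\Bbase}$. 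Identifying that as the crux is a real gain in transparency over the paper's computation, which silently assumes the same base-change statement.

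The one step I would not sign off on as written is your resolution of that crux. The implication ``the fibrewise rank of $d$ is locally constant, hence $\ker d$ and $\operatorname{coker} d$ are locally free and commute with base change'' is Grauert's theorem, and it requires $\Bbase$ to be \emph{reduced}: over $\Bbase=\Spec(k[\epsilon]/(\epsilon^2))$ the map $d=(\epsilon)\colon \mathcal{O}\to\mathcal{O}$ has fibrewise rank $0$ at the unique point, yet its kernel and cokernel are not free. The paper imposes no reducedness hypothesis on $\Bbase$. To close this, argue degree by degree with no hypothesis on $\Bbase$: for $i\ge 2$ and fibre genus $g\ge 2$ one has $H^1(C_b,\omega^{\otimes i}_{C_b})=0$ on every geometric fibre, so the two-term complex has surjective differential, $R^1\pi_*\omega^{\otimes i}=0$, and $\pi_*\omega^{\otimes i}$ is a direct summand of $\mathcal{V}^0$, hence locally free and compatible with arbitrary base change; for $i=1$ the trace map identifies $R^1\pi_*\omega_{C/\Bbase}$ with $\mathcal{O}_{\Bbase}$ compatibly with base change, and local freeness of $\operatorname{coker}d$ of the expected rank forces the same conclusion for $\ker d=\pi_*\omega_{C/\Bbase}$; the residual cases $g\le 1$ are degenerate and handled directly ($\pi_*\omega^{\otimes i}=0$ for $g=0$; $\omega_{C/\Bbase}\cong\pi^*\pi_*\omega_{C/\Bbase}$ for $g=1$). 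Relatedly, your parenthetical ``the only degree where $R^1$ survives is $i=1$'' is only true for $g\ge 2$. With these repairs the proof is complete.
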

\begin{proof}
To find an isomorphism between $A^{(\Bbase)}$ and $A'$ is 
equivalent to find
a natural isomorphism between sheaves of
$\mathcal{O}_{\Bbase}$-algebras:
\begin{equation}\label{frfr}
Sym^{\bullet}((\pi_*\omega_{X/\Bbase})^{\vee})\otimes_{\mathcal{O}_{\Bbase}, fr_B^{\#}}\mathcal{O}_{\Bbase}
\cong 
Sym^{\bullet}((\pi_*^{(B)}\omega_{X^{(B)}/B})^{\vee}).
\end{equation}

Since $X/\Bbase$ is smooth and of relative dimension 1,
working etale locally over $\Bbase$, we can assume that
$\Bbase$ is an affine scheme $\Spec(R)$,
and $X=\Spec(R[x])$.
The coherent sheaf $\omega_{X/\Bbase}$ (resp. $\omega_{X^{(\Bbase)}/\Bbase}$)
is the rank 1 free $R[x]$-module (resp. $R[x\otimes 1]$-module)
with a generator $dx$ (resp. $d(x\otimes 1)$).
Let $y$ (resp. $y\otimes 1$) be the element in $Hom_R(R[x],R)$ (resp. $Hom_R(R[x\otimes 1],R)$)
that sends $x$ (resp. $x\otimes 1$) to $1\in R$.
The left hand side of (\ref{frfr}) corresponds to the $R$-algebra
$R[y,\partial_x]\otimes_{R,fr_R}R$, while
the right hand side of (\ref{frfr}) corresponds to the $R$-algebra
$R[y\otimes 1,\partial_{x\otimes 1}]$, 
The assignment $f\partial_x \otimes 1\mapsto (f\otimes 1)\partial_{x\otimes 1}$ induces an isomorphism of $R$-algebras from the left hand side to the right hand side of (\ref{frfr}).
 \end{proof}
 
\begin{rmk}[$Fr_A\text{\say{=}}\sig_{\Bbase}^*$]\label{frfactor}
 Assuming the notation in Lemma \ref{lemma frfr} and its proof, we see that the comorphisms of the $\Bbase$-morphisms $A\ra A^{(\Bbase)} \ra A'$ are determined by the assignments $(f\otimes 1)\partial_{x\otimes 1}\mapsto f\partial_x\otimes 1\mapsto (f\partial_x)^p\in R[y,\partial_x]$.
 We can then derive another description of the compositum $F: A\ra A'$:
 
 A $B$-point of $A$ is a linear combination of terms of the form $ r_i x^j(dx)^k$ with $r_i\in R$.
We have that 
\[((y\otimes 1)^j\partial_{x\otimes 1}^k)\Big(F(B)\big(r_i x^j (dx)^k\big)\Big)=\Big((y^j\partial_x^k)\big(r_ix^j(dx)^k\big)\Big)^p=r_i^p.\]

Therefore we see that $F(B)$ sends $dx$ to $dx\otimes 1$, $x$ to $x\otimes 1$, and $r_i$ to $r_i^p$.
Therefore, the function $F(B):A(B)\ra A'(B)$ is induced by the pull back
\begin{align*}
    & H^0(B\times_B C, pr_C^*\omega_{C/\Bbase}) &&(=H^0(C,\omega_{C/\Bbase}))\\
    \xra{(fr_B^*,\sig_B^*)}
    & H^0(B\times_B C^{(\Bbase)}, pr_{C^{(\Bbase)}}^*\omega_{C^{(\Bbase)}/\Bbase}) &&(=H^0(C^{(\Bbase)}, \omega_{C^{(\Bbase)}/\Bbase})).
\end{align*}
 
Similarly, one can show that, for any $\Bbase$-scheme $T$, the morphism $F(T): A(T)\ra A'(T)$ is induced by 
 the pull back $(\sig_{\Bbase}, fr_T)^*$.
 
 Note that by \cite[\S2.3]{la-pa}, we have the following commutative diagram:
 
 \begin{equation}
 \xymatrix{
 C^{(\Bbase)}\times_B T \ar@/^1pc/[rr]^-{(\sig_{\Bbase},fr_T)} \ar[r]_-{\simeq} \ar[dr]_-{pr_T}
 & (C\times_B T)^{(T)} \ar[r]_-{\sig_T} \ar[d] & C\times_B T \ar[d]\\
 & T \ar[r]_-{fr_T} & T.
 }
 \end{equation}
 
 Therefore, up to a natural identification $C^{(\Bbase)}\times_B T\cong (C\times_B T)^{(T)}$, the morphism 
 $F(T)$ is induced by $\sig_T^*$.
 \end{rmk}

 \begin{lemma}[\bf Factorization of Hodge-Hitchin Morphism over $0_{A'}$]
 \label{dolfacle}
 Let $A^p$ be the $\Bbase$-scheme that is the total space of the locally free $\mathcal{O}_{\Bbase}$-module
$\bigoplus_{i=1}^r\pi_*\omega_{X/\Bbase}^{\otimes ip}$.
 There exists the following commutative diagram:
 \beq\la{zbgdg}
\xymatrix{
M_{Hod}(C/\Bbase)_{0_{A'}}   \ar[rr]^-{h_{Hod,0_{A'}}} && A'  
 \ar[rr]^-{Fr_{C/\Bbase}^*}& &    A^p
\\
M_{Dol}(C/\Bbase) \ar[u]  \ar[rr]_-{h_{Dol}}&& A \ar[urr]_-{fr_C^*} \ar[u]^-{\sig_{\Bbase}^*``="Fr_A}. & &
}
\eeq
 \end{lemma}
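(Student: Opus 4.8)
The plan is to establish the commutativity of (\ref{zbgdg}) by treating its two constituent cells separately: the right-hand triangle with vertices $A, A', A^p$, and the left-hand square with vertices $M_{Dol}(C/\Bbase)$, $M_{Hod}(C/\Bbase)_{0_{A'}}$, $A'$ and $A$. Throughout I write $\iota$ for the left vertical map $M_{Dol}(C/\Bbase) \to M_{Hod}(C/\Bbase)_{0_{A'}}$ that sends a Higgs bundle $(E,\theta)$ to the $t=0$ connection it defines.

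For the right triangle I would simply invoke contravariant functoriality of the Hitchin-base assignment $C \mapsto A(C, \omega_{C/\Bbase})$. By the commutative diagram of Frobenii in the \say{Frobenius} paragraph of \S\ref{rt51}, the ($\Bbase$-linear) absolute Frobenius of the curve factors as $fr_C = \sigma_\Bbase \circ Fr_{C/\Bbase}$; pulling spectral data back along this factorization, and using that $Fr_A = \sigma_\Bbase^*$ by Remark \ref{frfactor} while $Fr_{C/\Bbase}^*$ is pullback along the relative Frobenius under the canonical identification $Fr_{C/\Bbase}^* \omega_{C^{(\Bbase)}/\Bbase} \cong \omega_{C/\Bbase}^{\otimes p}$, yields $Fr_{C/\Bbase}^* \circ Fr_A = fr_C^*$ at once. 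Alternatively one can chase the explicit formulas of Remark \ref{frfactor}: $Fr_A$ sends a local expression $r\, x^j(dx)^k$ to $r^p (x\otimes 1)^j (d(x\otimes1))^k$, and $Fr_{C/\Bbase}^*$ then substitutes $x\otimes 1 \mapsto x^p$ and $d(x\otimes 1)\mapsto (dx)^{\otimes p}$, producing $r^p x^{pj}(dx)^{kp} = fr_C^*\big(r\,x^j(dx)^k\big)$.

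For the left square I would argue at the level of the functors corepresented by the coarse moduli spaces (Remark \ref{jj}): it suffices to check equality of the two induced natural transformations on every family of semistable Higgs bundles over a $\Bbase$-scheme $T$. For such a family the $t=0$ connection is $\theta$ itself, which is $\mathcal{O}_C$-linear, and its $p$-curvature (\cite[\S3.5]{la-pa}) is the $p$-fold iterate $\theta^{\circ p}\colon E \to E \otimes \omega_{C/\Bbase}^{\otimes p}$. The crucial computation is that the characteristic polynomial of $\theta^{\circ p}$ is the $p$-th power of that of $\theta$: the roots of the former are the $p$-th powers of the roots of the latter, and in characteristic $p$ the elementary symmetric functions obey $e_k(\lambda_1^p,\dots,\lambda_r^p) = e_k(\lambda_1,\dots,\lambda_r)^p$ by the identity $(\sum y_j)^p = \sum y_j^p$. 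Hence each coefficient $a_i \in \pi_*\omega_{C/\Bbase}^{\otimes i}$ of the Hitchin image $h_{Dol}(E,\theta)$ produces the coefficient $a_i^p \in \pi_*\omega_{C/\Bbase}^{\otimes ip}$, i.e.\ $cp \circ \iota = fr_C^* \circ h_{Dol}$ as morphisms to $A^p$. Combining this with the factorization $cp = Fr_{C/\Bbase}^* \circ h_{Hod,0_{A'}}$ read off from (\ref{eqdefh}) (\cite[Prop.\;3.2]{la-pa}) and with the right-triangle identity $fr_C^* = Fr_{C/\Bbase}^* \circ Fr_A$, I obtain $Fr_{C/\Bbase}^* \circ (h_{Hod,0_{A'}} \circ \iota) = Fr_{C/\Bbase}^* \circ (Fr_A \circ h_{Dol})$; since $Fr_{C/\Bbase}^*$ is a closed immersion, hence a monomorphism, I may cancel it to get $h_{Hod,0_{A'}} \circ \iota = Fr_A \circ h_{Dol}$, the commutativity of the left square.

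I expect the main obstacle to lie in the left square: fixing the precise normalization of the $p$-curvature of a $t=0$ connection from \cite[\S3.5]{la-pa}, so that it really is the iterate $\theta^{\circ p}$ (and not a twist or scalar multiple thereof), and carrying out the eigenvalue/symmetric-function argument for families rather than only on geometric points. It is precisely the monomorphism property of $Fr_{C/\Bbase}^*$, together with corepresentability, that lets me reduce the whole square to the clean identity $e_k(\lambda_i^p) = e_k(\lambda_i)^p$ and then cancel.
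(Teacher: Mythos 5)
Your proposal is correct and follows essentially the same route as the paper: the right triangle via the factorization $fr_C=\sig_{\Bbase}\circ Fr_{C/\Bbase}$ and Remark \ref{frfactor}, the outer pentagon via the fact that the $p$-curvature of a Higgs field is its $p$-th iterate (you merely spell out the resulting identity $e_k(\lambda_i^p)=e_k(\lambda_i)^p$ of characteristic-polynomial coefficients, which the paper leaves implicit), and then cancellation of the monomorphism $Fr_{C/\Bbase}^*$ to obtain the left square. No gaps.
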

 \begin{proof}
Since the $p$-curvature of a Higgs bundle $\phi$ is $\phi^p$, we have that the outer 5-gon in (\ref{zbgdg})
is commutative.
Since $fr_C=\sig_{\Bbase}\circ Fr_{C/\Bbase}$, by Remark \ref{frfactor}, we see that the triangle in (\ref{zbgdg}) is commutative.
Furthermore, the morphism $Fr_{C/\Bbase}^*$ is a monomorphism.
Therefore, the bottom left square of (\ref{zbgdg}) is commutative.
 \end{proof}

\begin{rmk}
The commutativity of diagram (\ref{zbgdg}) shows that the isomorphism $A^{(\Bbase)}\cong A'$ is $\GB$-equivariant, since all the other arrows involved in the left square of (\ref{zbgdg}) are $\GB$-equivariant. 
\end{rmk}

\begin{lemma}[\bf Trivialization of Hodge-Hitchin Morphism over $\GB$]
\label{dreq}
The natural $\Bbase$-morphism $M_{dR}(C/\Bbase) \to M_{Hod}(C/\Bbase)\times_{\mathbb A^1_{\Bbase}} 1_{\Bbase}$ is an isomorphism.
There exists a natural isomorphism of $\Bbase$-schemes $M_{dR}(C/\Bbase)\times_{\Bbase} \GB\cong M_{Hod}(C/\Bbase)\times_{\mathbb A^1_{\Bbase}}\GB$.
Furthermore, we have the following commutative commutative diagram:
\begin{equation}
\label{diagtrihod}
    \xymatrix{
    M_{Hod}(C)\times_{\mathbb A^1}\GB \ar[d]_-{h_{Hod,\GB}} &
    M_{dR}(C)\times_{\Bbase} \GB \ar[l]_-{\simeq} \ar[d]^-{h_{dR}\times 1_{\GB}}\\
    A'\times \GB & A'\times \GB\ar[l]_{\simeq}
    }
\end{equation}
\end{lemma}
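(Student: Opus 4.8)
The plan is to prove the three assertions in reverse order of difficulty: I would first establish the trivialization over $\GB$ (the second statement), then deduce the isomorphism of the fibre over $1_{\Bbase}$ (the first statement) by restriction, and finally obtain the commutativity of (\ref{diagtrihod}) from an equivariance argument. The engine throughout is the $\GB$-action $s\cdot(E,t,\nabla_t)=(E,st,s\nabla_t)$, which trivializes the moduli problem wherever $t$ is invertible: for a $\Bbase$-scheme $T$, the assignment $(E,t,\nabla_t)\mapsto\big(t,(E,1,t^{-1}\nabla_t)\big)$ is a natural isomorphism from the functor of semistable $t$-connections with $t\in\mathcal{O}_T^{\times}$ onto the functor $\underline{M_{dR}}$ pulled back to $\GB$, the inverse being $(s,(E,1,\nabla))\mapsto(E,s,s\nabla)$. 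Here $t^{-1}\nabla_t$ is indeed a genuine connection, as one checks from the Leibniz rule.

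Passing to coarse moduli spaces, $M_{dR}\times_{\Bbase}\GB$ uniformly corepresents $\underline{M_{dR}}$ pulled back to $\GB$, since $M_{dR}$ uniformly corepresents $\underline{M_{dR}}$ (Remark \ref{jj}) and $\GB\to\Bbase$ is flat; on the other hand, $\tau_{Hod}^{-1}(\GB)=M_{Hod}\times_{\mathbb{A}^1_{\Bbase}}\GB$ corepresents the functor of $t$-connections with invertible $t$, because this is the open subfunctor cut out by the open immersion $\GB\hookrightarrow\mathbb{A}^1_{\Bbase}$ and coarse moduli spaces restrict to opens. Uniqueness of corepresenting objects then yields the canonical isomorphism $M_{dR}\times_{\Bbase}\GB\xra{\ \sim\ }M_{Hod}\times_{\mathbb{A}^1_{\Bbase}}\GB$ of $\GB$-schemes, which is the second assertion.

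I expect the first assertion to be the one place where care is genuinely required, because its fibre $1_{\Bbase}\hookrightarrow\mathbb{A}^1_{\Bbase}$ is not flat, and in positive characteristic the structure group of Langer's \ci{la-2014} GIT construction is not linearly reductive, so the formation of the coarse moduli space need not commute with this base change. I would avoid the issue entirely by never base-changing the construction along a non-flat map: having produced the isomorphism over the flat base $\GB$, I simply restrict it along the section $1_{\Bbase}\hookrightarrow\GB$. The fibre of the left-hand side is $M_{dR}$ and that of the right-hand side is $M_{Hod}\times_{\mathbb{A}^1_{\Bbase}}1_{\Bbase}$, so this gives the desired isomorphism; and since the trivialization restricts to the identity over $s=1$, the map obtained is the natural morphism of the statement. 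Thus the main obstacle is not a computation but the bookkeeping that keeps every base change flat.

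For (\ref{diagtrihod}), I would let the lower isomorphism $A'\times\GB\xra{\ \sim\ }A'\times\GB$ be $(a,s)\mapsto(s\cdot a,s)$ for the weight-$ip$ dilation of $\GB$ on the factors $\mathbb{A}'_i$ of $A'$, equivalently the unique $\GB$-equivariant isomorphism that is the identity on the fibre over $1_{\Bbase}$. Both composites $M_{dR}\times_{\Bbase}\GB\to A'\times\GB$ in question are then $\GB$-equivariant, for the translation action on the $\GB$-factor of the source and the (weight-$ip$ on $A'$, translation on $\GB$) action on the target, and they agree on the fibre over $1_{\Bbase}$: there the trivialization is the identity and $h_{Hod}$ restricted to that fibre is by definition $h_{dR}=h_{Hod,1_{\mathbb{A}^1}}$, the characteristic polynomial of the $p$-curvature. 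As every point of $M_{dR}\times_{\Bbase}\GB$ is the $\GB$-translate of a point of the fibre over $1_{\Bbase}$, two $\GB$-equivariant morphisms agreeing there agree everywhere, and (\ref{diagtrihod}) commutes. The only computation hiding behind the equivariance is that the $\GB$-action scales the $p$-curvature of a $t$-connection by $s^{p}$, which is exactly what makes the weight-$ip$ untwisting the correct one.
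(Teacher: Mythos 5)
Your proposal is correct and follows essentially the same route as the paper: the functor isomorphism $(E,\nabla,t)\leftrightarrow(E,t\nabla)$ over the flat base $\GB$ combined with uniform corepresentability (Remark \ref{jj}) gives the trivialization, the fibre statement over $1_{\Bbase}$ is then read off by restricting the $\GB$-equivariant isomorphism, and the bottom arrow of (\ref{diagtrihod}) is the weight-$ip$ untwisting $(a_i,t)\mapsto(t^{ip}a_i,t)$ coming from $\Psi(t\nabla)=t^{p}\Psi(\nabla)$. Your version merely spells out more explicitly the flatness bookkeeping and the equivariance argument for the commutativity of the square, both of which the paper leaves implicit.
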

\begin{proof}
Since $M_{Hod}$ is uniformly corepresenting, we have that the fiber product $M_{Hod}\times_{\mathbb A^1}\GB$ is corepresenting the functor of $t$-connections with invertible $t$.
Therefore, the morphism  $((E,\nabla),t)\mapsto (E,t\nabla))$ defines an isomorphism between the functors that are corepresented by $M_{dR}\times_{\Bbase}\GB$ and $M_{Hod}\times_{\mathbb A^1}\GB$, thus an isomorphism between the corepresenting schemes.
Since $f$ is also an isomorphism of $\GB$-schemes, we have an isomorphism $M_{dR}\cong M_{Hod}\times_{\mathbb A^1} 1_{\Bbase}$.
When $J$ is a field of characteristic $p>0$, the bottom isomorphism in (\ref{diagtrihod}) is given by $(a_i,t)\mapsto (t^{ip} a_i, t)$, with $a_i\in \mathbb A_i'$ (recall that $\mathbb A_i'$ is the direct factor of $A(C^{(\Bbase)}/\Bbase)$ corresponding to the locally free sheaf $\pi_*^{(\Bbase)}\omega_{X^{(\Bbase)}/\Bbase}^{\otimes i}$).
\end{proof}

\subsection{Proof of Theorems \ref{comphodj} and \ref{cpt tm hod}}\la{bnbn}$\;$

\begin{proof}[Proof of Theorem \ref{comphodj}]
We have the basic $\GB$-equivariant diagram with Cartesian square (where the subscript $B$ is dropped) from \ci[\S3.1]{fe-ma}. This construction  already appears in \ci[Proof of Lemma 6.1]{ha-th}.
\beq\la{act}
\xymatrix{
Z:= M_{Hod} \times_{\mathbb A^1} \mathbb A^2 \ar[r] \ar[d] \ar@/_2pc/[dd]_-{\tau'} & M_{Hod} \ar[d]^-\tau  & &
\\
\mathbb A^2_{x,y} \ar[r] \ar[d] & \mathbb A^1_\lambda & (x,y) \ar@{|->}[r] \ar@{|->}[d] & \lambda=xy
\\
S:=\mathbb A^1_x &  &x, &
}
\eeq
where the $\GB$ action on $\mathbb A^2_{x,y}$ is defined by setting $t(x,y):= (x,ty)$, the $\GB$ action  on $\mathbb A^1_\lambda$ is the usual dilation
$t\cdot \lambda:= t \lambda,$ and the $\GB$  action on $\mathbb A^1_x$ is trivial. 

We would like to apply the Compactification Theorem II \ref{ugm proper 2} to $Z/S$ in (\ref{act}).
Below we show that the assumptions in Theorem (\ref{ugm proper 2}), i.e. that zero limits of Zariski points exist in $Z$, and that the fixed point locus in $Z$ is proper over $S$, are satisfied:

A Zariski point $z\in Z(k)$, where $k$ is a field, can be represented by a pair $((E,\nabla), (x,y))$ on $C_k$, where $(E, \nabla)$ is semistable as a vector bundle with an $xy$-connection. 
The $\Gg_{m,k}$-orbit of $z$ is then represented by $((E,t\nabla), (x,ty))$.
We can naturally extend this $\Gg_{m,k}$-orbit to obtain an $\mathbb{ A}^1_k$-family with the element over $0_k\in \mathbb{ A}^1_k$ being $((E,\phi), (x,0))$, where $(E,\phi)$ is a possibly non-stable Higgs bundle.
By Langer's Langton-type result \cite[Th.\;5.1]{la-2014}, we can change the $0_k$-fiber $(E,\phi)$ to a semistable $(E',\phi')$ so that we obtain a morphism 
$\mathbb{A}^1_k\ra Z$ which maps $t\in \Gg_{m,k}(k)$ to $((E,t\nabla), (x,ty))$ and maps $0_k$ to $(E', \phi')$.
Therefore, we have that $Z$ has all its zero limits.

The $\GB$ fixed locus on $M_{Hod}$ is contained in the fiber  $M_{Hod,0_{\mathbb A^1_{\lambda}}}$ of $\tau$ over the origin $0_{\mathbb A^1_{\lambda}} \in \mathbb A^1_{\lambda}$. 
Since the natural morphism $M_{Dol}\ra M_{Hod,0_{\mathbb A^1_{\lambda}}}$ is $\GB$-equivariant and bijective on geometric points, we have that the set underlying $\GB$ fixed locus is naturally identified with the $\GB$ fixed locus of $M_{Dol}$ inside $h_{Dol}^{-1}(o_{A'})$.

We now show that $h^{-1}_{Dol}(o_{A'})$ is proper over $\Bbase$:
Using the valuative criterion for properness, we are reduced to the case where $\Bbase$ is a 
discrete valuation ring, but then 
the Langton-type argument in the proof of \ci[Thm. I.3]{fa}
goes through and gives the properness of $h^{-1}_{Dol}(o_{A'})$ over $\Bbase$.
Since the $\Bbase$-morphism $M_{Dol}\ra M_{Hod,0}$ is bijective on geometric points,
using the valuative criterion Lemma \ref{valuative uni cl} (taking $L$ to be algebraic closure of $K$ so that we can lift the $L$ point from $M_{Hod,0_{\mathbb{A}^1}}$ to $M_{Dol}$), we have that the
$\GB$-fixed locus of $M_{Hod}$ is also proper over $\Bbase$. 

The following can then be verified:

(1)
The  $\GB$ fixed point set in $Z$ is proper over $S$;

(2)
The complement $U$ in $Z$ of the set of points in $Z$ admitting infinity limits, 
is $Z$ minus the $x$-axis times the closed subset of $M_{Hod}$ that is universally homeomorphic to 
the proper fiber $h_{Dol}^{-1}(o_{A'})$;

(3)
The open subvariety  $T \subseteq Z$  obtained by removing the preimage of the origin via the projection onto the $y$-axis,
endowed with the projection to this puncture $y$-axis  is $\GB$-equivariantly isomorphic to $M_{Hod} \times \GB$ (cf. (\ref{diagtrihod})).

Apply the Projectivity Theorem III \ref{proj tm} to this situation. The Projective Completion of $\tau: M_{Hod} \to \mathbb A^1$
Theorem \ref{comphodj}
follows once we set $\ov{M_{Hod}}:= U/\GB$ etc. 

To finish the proof, we simply need to observe that:

(i)
 $M_{Hod}$ admits a natural open immersion into $\ov{M_{Hod}}$ by property
(3) above.  

(ii)
$U/\GB$ admits a natural  $\GB$ action compatible
with the open immersion (i); this action is already present on $Z$ and on  $U$ by letting $\GB$ act by the standard weight one dilation $t\cdot x:=tx$ on the $x$ coordinate.

(iii)
We also need to endow $S:=\mathbb A^1_x$ (which originally had the trivial action, so we could  arrive to $U/\GB \to S$ via the Compactification Theorem II \ref{ugm proper 2})
with the standard weight one dilation action, so that, now, $\ov{\tau}$, and in fact the whole diagram (\ref{eq001}), is $\GB$-equivariant.  The proof of Theorem \ref{comphodj} is thus completed.
\end{proof}

\begin{proof}[Proof of Theorem \ref{cpt tm hod}]
We would like to apply the Compactification Theorem III \ref{proj tm}.
We define the $Z$, $Z'$, and $S$ in Theorem \ref{proj tm} in the following way:

We augment the diagram (\ref{act}) by inserting the Hodge-Hitchin morphism. 
We obtain the $\GB$-equivariant commutative diagram with Cartesian squares
\beq\la{act hh}
\xymatrix{
& Z:= M_{Hod} \times_{\mathbb A^1} \mathbb A^2 \ar[r] \ar[d]^-{h_{Hod}'}  \ar@/_/[ddl]_-{\tau'} & M_{Hod} \ar[d]^-{h_{Hod}}  
\ar@/^3pc/[dd]^-{\tau}
\\
& Z':=A' \times_\Bbase \mathbb A^2_{x,y} \ar[d] \ar[r]  & A' \times_\Bbase \mathbb A^1_\lambda \ar[d]
\\
S:=\mathbb A^1_x &  \mathbb A^2_{x,y} \ar[r] \ar[l] & \mathbb A^1_\lambda.
}
\eeq

To check that the assumptions of Theorem \ref{proj tm}, we note that,
firstly,
the assumptions in Theorem \ref{proj tm} that are only about $Z$ and $U$ are checked in the proof of Theorem \ref{comphodj};
secondly,
other assumptions involving $Z'$ and $U'$ are easily checked to be satisfied.
The existence of the commutative diagram (\ref{eq01}), the item (1) of Theorem \ref{cpt tm hod},
and the projectivity of $\ov{h_{Hod}}$ and $\ov{\tau_{Hod}}$
then follows from the application of Theorem \ref{proj tm}.

Since $\ov{h_{Hod}}$ maps the boundary of $\ov{M_{Hod}}$ to the boundary $\ov{A'}\times \mathbb A^1_{\Bbase}$, we have that the Hodge-Hitchin morphism $h_{Hod}$ is also projective. Thus we have the item (2) of Theorem \ref{proj tm}. 

By inspecting the construction of $Z'$,  it is clear that  we obtain the  weighted projective space $\mathbb P (1,1 \cdot p,2p,\ldots, rp).$
This latter coincides with  
$\mathbb P (1,1,2,\ldots, r)$ in view of the fact that  we  can replace $ip$ by $i$;  see Remark \ref{wz}.
The proof of Theorem \ref{cpt tm hod} is thus completed.

\end{proof}

\subsection{Proof of Theorems \ref{Jcpt tm dol} and \ref{cpt tm dol}}\la{qnbn}$\;$

\begin{proof}[Proof of Theorem \ref{Jcpt tm dol}]
We would like to apply Compactification Theorem III \ref{proj tm} as in the proof of Theorem \ref{cpt tm hod} in \S\ref{bnbn}.

To obtain a similar diagram as (\ref{act hh}).
we substitute the right column of (\ref{act hh}) by the morphisms 
$M_{Dol}\xra{h_{Dol}'} A\times_{\Bbase} \mathbb A^1 \xra{pr}\mathbb A^1$,
where $h_{Dol}'$ denotes the Hitchin morphism $h_{Dol}: M_{Dol}\to A$ followed by the closed embedding $A=A\times \{0_{\mathbb A^1}\}
\to A \times \mathbb A^1.$ 
We can then add the corresponding analogue of the left column in (\ref{act hh}) by the construction in (\ref{act}).
Then the arguments in \S\ref{bnbn} applies, \textit{mutatis mutandis}, and finishes the proof of all the statements in Theorem \ref{Jcpt tm dol} about the left half of the diagram
(\ref{ahmm}).

For the remaining statements in Theorem \ref{Jcpt tm dol}, we need two variations of the diagram (\ref{act}):
For the first variation, we first replace the right column of (\ref{act}) by the constant $\Bbase$-morphism $M_{Dol}\ra \mathbb A^1_{\Bbase}$ that sends $M_{Dol}$ to $0_{\Bbase}\subset \mathbb A^1_{\Bbase}$, 
and then construct the left column as in (\ref{act})-- this gives us the compactification of $\ov{M_{Hod}}$ above;
For the second variation, we take the base change of the whole diagram (\ref{act}) via the inclusion $0_{\lam}\hra \mathbb A^1_{\lam}$-- this gives us the compactification $\ov{M_{Hod,0_{\Bbase}}}$ and shows that 
the fiber $(\ov{M_{Hod}})_{0_{\Bbase}}$ of the compactification of $M_{Hod}$ constructed in Theorem \ref{comphodj} is the compactification of $M_{Hod,0_{\Bbase}}$.
The natural morphism $M_{Dol}\ra M_{Hod,0_{\Bbase}}$ then induces a morphism of the two variations of diagram (\ref{act}), thus a morphism $\ov{M_{Dol}}\ra \ov{M_{Hod,0_{\Bbase}}}$.
The remaining statements in Theorem \ref{Jcpt tm dol} can then be checked routinely.
\end{proof}

\begin{proof}[Proof of Theorem \ref{cpt tm dol}]
We assume the notation in the proof of Theorem \ref{Jcpt tm dol}.
Consider the commutative diagram of $\GB$-equivariant morphisms
\beq\la{mm11}
\xymatrix{
M_{Dol} \ar[rr] \ar[d]^-{h_{Dol}'} & & M_{Hod} \ar[d]^-{h_{Hod}}
\\
A \times_B \mathbb A^1\ar[rr]^-{Fr_A \times Id_{\mathbb A^1}} \ar[rd]_-{pr} && A'\times_B \mathbb A^1 \ar[ld]^-{pr}
\\
& \mathbb A^1, &
}
\eeq
We repeat essentially verbatim the arguments in \S\ref{bnbn}, by applying the construction (\ref{act}) and augmenting it
using (\ref{mm11}), the same way we used (\ref{act hh}). Of course, we need the evident ``multiple morphisms" version of the Compactification Theorem II  \ref{ugm proper 2} and of the Projectivity Theorem III \ref{proj tm}.
The items (1)-(3) of Theorem \ref{cpt tm dol} then follow.
(Let us remark that, in view of the factorization $M_{Dol}\to M_{Hod,0_{\mathbb A^1}} \to A',$  the projectivity of $M_{Dol}\to M_{Hod,0_{\mathbb A^1}}$ follows immediately from the projectivity of the compositum
$M_{Dol}\to  A \to A'$.)

For the item (4) of Theorem \ref{cpt tm dol}, note that $c: M_{Dol}\to M_{Hod,0_{\mathbb A^1}}$ is a universally closed bijection. If the rank $r$ and degree $d$ are  coprime, then the morphism $c$ is an isomorphism by the universal corepresentability
property of  the  Hodge moduli space; see Remark \ref{jj}.
We are done if we can show that $c$ is indeed a universal bijection, as universally closed universal bijections are universal homeomorphisms.
Given any Zariski point $x$ of $M_{Dol}$, we would like to show that the field extension $\kappa(x)\supset \kappa(c(x))$ is purely inseparable.
By taking the closure of $x$ and $c(x)$, and shrinking $\ov{c(x)}$ if necessary, we may assume that $\ov{x}$ and $\ov{c(x)}$ are both normal, and $c|_{\ov{x}}$ is finite.
Let $\tilde{x}$ be the normalization of $\ov{x}$ in the separable closure of $\kappa(c(x))$ inside $\kappa(x)$.
We have that $\tilde{x}\ra \ov{c(x)}$ is generically etale and injective.
Since $J$ is algebraically closed, and here is the only place where we use this assumption on $J$,
we have that $\tilde{x}\ra \ov{c(x)}$ is an isomorphism over an open and dense subset of $\ov{c(x)}$.
Therefore, we have that $\kappa(\tilde{x})=\kappa(c(x))$, thus the field extension $\kappa(x)\supset \kappa(c(x))$ is purely inseparable. We have thus proved item (4). 

For the item (5) of Theorem \ref{cpt tm dol}, note that by construction, we obtain the morphism 
\beq\la{kij}
\xymatrix{
\ov{A}=\mathbb P (1,1,2,\ldots,p) \ar[r] & \ov{A'}=\mathbb P (1,p,2p, \ldots, rp)=P (1,1,2,\ldots,r)
=\ov{A}.
}
\eeq
Note that the Frobenius twist of $\ov{A}$ is $\Proj(p,p,2p,...,rp)=\Proj(1,1,2,...,p)=\ov{A}=\ov{A'}.$
Since the restriction of $\ov{A}\ra \ov{A'}$ to $A$ is the relative Frobenius $Fr_A$, we have that
the morphism (\ref{kij})  $\ov{A}\ra \ov{A'}$ is also the relative Frobenius $Fr_{\ov{A}}$ for $\mathbb P (1,1,2,\ldots, r)$.

The compactification and $M_{Dol}$ Theorem \ref{cpt tm dol} follows. 

\end{proof}

\section{Appendix: smooth moduli and specialization}\la{appdx}$\;$

\subsection{Introduction to the Appendix}\la{intro appdx}$\;$

{\bf The paper \ci{de-2021}} is devoted to develop a formalism for the  specialization morphism, when it exists,  as a perverse Leray filtered morphism for a family of morphisms $f:X\to Y$ over a base curve $S$; the ground field is the
one  of the  complex numbers.
While \S1,2,3 of loc.cit. are rather general, \S4 of loc.cit.  is devoted to applying the formalism when the morphism $f$ is   the Hitchin morphism  $M_{Dol}(C/S) \to A(C/S) \to S$ associated with  a  smooth curve
 (\S\ref{rt51})  $C/S$.
 The main result is \ci[Tm 4.4.2]{de-2021}, to the effect  that the specialization morphisms for this family are 
defined and are filtered isomorphisms.  Another relevant result is \ci[Lm 4.3.3]{de-2021}, to the effect that 
$\phi (v_* \rat_{M_{Dol}})=0$ for  the vanishing cycle
functor applied to the direct image complex with respect to the structural morphism $v:M_{Dol}(C/S)\to S.$
Note that neither statement is a priori clear, since the morphism $v$ is not proper.

{\bf In the paper \ci{de-szh naht}}, we use the generalization of \ci[Tm. 4.4.2, Lm 4.3.3]{de-2021} to the cases where the base is a complete 
strictly Henselian DVR $S$, and the morphisms $f$ are: 1) the Hodge-Hitchin morphism (\ref{eqdefh})
$M_{Hod}(C/\field) \to \mathbb  A(\FC)\times A^1_\field \to \mathbb A^1_\field$, after base change to the appropriate local ring $S$ at the origin of $\mathbb A^1_\field$; 2)
the Hitchin morphism (\ref{hitz mo})  $M_{Dol}(C/S) \to A(C/S)\to S$.
These moduli space are with respect to certain coprimality conditions  on rank, degree and characteristic of the ground field. These conditions turn out to imply the smoothenss of these moduli spaces
and that they  universally corepresent the appropriate functors
(so that taking the fibers over $S$, one gets the expected moduli space). 
The  desired generalization of these results  is not a simple matter of  routine and has  as starting point the compactification results stated in  \S\ref{rt51} and proved in \S\ref{bnbn}.

{\bf The purpose of this appendix} is to tie in the compactification results of this paper with \ci[\S4]{de-2021} by providing the necessary background so that we can prove \ci[Tm. 4.4.2, Lm. 4.3.3]{de-2021} for the Hodge and Dolbeault moduli spaces for smooth curves $C/S$ (\ref{rt51}) over a complete strictly Henselian DVR $S$ as in the previous paragraph, so that we can use these results in \ci{de-szh naht}.

{\bf Brief summary of \ci[\S4]{de-2021}.}
\ci[\S4]{de-2021} uses the compactification constructed in
\ci[Tm. 3.1.1 and (14)]{de cpt}; this construction uses the same kind of  
quotient by $\Gm$ technique used in this paper.  The outcome of the construction is summarized in the diagrams 
\ci[(70), (72)]{de-2021}: the compactification of the  moduli space is denoted by an open immersion
 $X^o \subseteq X$ with boundary $Z$ (i.e. a triple $(Z,X,X^o)$);  the  compactification is obtained by taking the quotient by $\Gm$ of a suitable  triple $(\ms{Z},\ms{X},\ms{X}^o)$.
\ci[\S4]{de-2021} uses in an essential way,
the topological local triviality, due to C. Simpson, of the Dolbeault moduli spaces  over the base $S$ (all over the complex numbers). This implies  the same kind of local triviality of the triple  $(\ms{Z},\ms{X},\ms{X}^o)$. 
In turn, this implies the vanishing $\phi(\rat_{\ms{X}})= \phi(\rat_{\ms{Z}})=0$ of the vanishing cycles
\underline{before} taking the quotient by $\Gm.$ Due to the local product structure (\ci[Lm. 4.3.1]{de-2021}), 
one also has, \underline{before} taking the quotient, that 
$a^!{\oql}_{\ms{Z}}= {\oql}_{\ms{Z}}[-2],$ where $a: \ms{Z}
\to \ms{X}$ is the closed embedding.
The key point in proving \ci[Tm.4.4.2, Lm. 4.3.3]{de-2021} is to descend,
along the quotient by $\Gm$,  the  vanishings and identities  above from the triple
$(\ms{Z}, \ms{X}, \ms{X}^o)$  to the triple $(Z,X,X^o).$

{\bf Plan to fulfill the purpose.}
In order to achieve the desired purpose stated above, the plan is to follow the path traced in \ci[\S4]{de-2021}
over the complex numbers and make the necessary adjustments along the way when working over the DVR $S$.

{\bf The suitable  $\oql$-adic formalism in \S\ref{recdvr}.}
First of all, we need a suitable formalism of $\oql$-adic constructible sheaves on separated schemes of finite type over the complete strictly Henselian  DVR $S$. This is the content of \S\ref{recdvr}, where 
we collect some results in the literature to provide a linear exposition of the eight functor formalism  and of perverse sheaves  for $\oql$-adic constructible
complexes on  separated schemes of finite type over an excellent DVR. 
With the formalism of \S\ref{recdvr} at our disposal, we recover virtually  the whole of the machinery in  \ci[\S1,2,3]{de-2021}, and we are ready to tackle the purpose of this appendix.

{\bf The compactifictions we use.}
We use the compactifications of Hodge and Dolbeault moduli given  in \S\ref{rt51} by  Theorems \ref{cpt tm hod}
and \ref{cpt tm dol}. The key construction is summarized by diagrams (\ref{act}) and (\ref{act hh}).
Warning on notation concerning case of the  Hodge moduli space:  i) what has been denoted by $\ms{X}$ above in the brief summary is a suitable open subset $U$ of what is denoted $Z$ in (\ref{act hh}) (cf. the proof of Theorem
\ref{comphodj} in \S\ref{bnbn}) ; ii) what is denoted by $\ms{Z}$ above, is the closed  subset of $U= \ms{X}$
given by the preimage of the $x$-axis with the fiber of the Hitchin morphism over the origin (nipotent cone $N_{Dol}$) 
removed from every copy of $M_{Dol}$ over the points of the $x$-axis; then $\ms{Z}$ is isomorphic to
the product ($x$-axis)$\times (M_{Dol} \setminus N_{Dol}).$ Despite the spaces being singular, the closed embedding $\ms{Z} \subseteq \ms{X}$ is regular of codimension one.

In general, the Hodge and Dolbeault moduli spaces are not regular.
We leave the issues of smoothness over the appropriate base,  and of  universal corepresentability
 for the Hodge and Dolbeault moduli spaces to \ci{de-szh naht}  (they follow from suitable coprimality assumptions), Next, we tackle all the remaining issues that arise in connection to generalizing  \ci[Th. 4.4.2, Lm. 4.3.3]{de-2021}
 from $\comp$ to  a complete strictly
Henselian DVR $S$.

{\bf List of remaining technical issues in \S\ref{a cpt sp}.}
At this juncture, a close inspection of \ci{de-2021} reveals that the only issues that arise when trying to fulfill
the purpose of this appendix, i.e.  generalize  \ci[Lm. 4.3.3 and Tm. 4.4.2]{de-2021} to these compactifications discussed above over the DVR $S$,  are the aforementioned smoothness and universal corepresentability (dealt with in \ci{de-szh naht}),
and a small list of technical facts that need to be suitably generalized when replacing  the base ring $\comp$ with an algebraically closed field, or with a complete strictly Henselian DVR. Dealing with this list, is the content  of 
\S\ref{a cpt sp}.

\subsection{Rectified perverse $t$-structure over a DVR}\la{recdvr}$\;$

We collect and complement references in the literature, so that one can work with nearby/vanishing cycles in the context of $\oql$-adic coefficients and middle perversity $t$-structures. The key ingredient is O. Gabber
rectified middle perversity $t$-structure for schemes of finite type over a DVR.

{\bf The trait.}
 Let $(S,s,\eta)$ be  trait, i.e.  the spectrum of a DVR (discrete valuation ring),
with closed point $i: s \to S$, and with generic open point $j: \eta \to S.$   With the exception
of (\ref{psph}), we work with schemes $f: X \to S$ that are separated and of finite type over $S$ and with $S$-morphisms that are separated and of finite type.
The special  closed fiber is denoted  $X_s$ and the generic open fiber $X_\eta.$ Fix a prime number $\ell$ that is invertible in $S.$

 {\bf The constructible  $\oql$-adic derived category.}
The trait $S$ is Noetherian, regular  and of dimension one. In particular, we have access to  the finiteness results in 
\ci[Th. Finitude]{sga45}.  Let $X/S$ be as above.
Let $D^b_c(X,\oql)$ be the $\oql$-constructible derived category, whose objects we call
(constructible) complexes; see \ci[Thm. 6.3]{ek}; 
 see also \ci[\S5]{fa}. It is endowed with a natural $t$-structure, with heart the abelian category 
 of $\oql$-constructible sheaves on $X.$

{\bf The formalism of functors.}
As $X/S$ varies,  with $S$ fixed, the categories $D^b_c(X,\oql)$ enjoy the usual formalism of the eight (``derived") functors 
\[
(f_!, f^!), (f^*, f_*), (\otimes, {\rm Hom}), \psi, \phi,
\]
with the usual adjunction relations among them --in each parenthesis $(A,B)$ above, $A$ is left adjoint to $B$--, as well as duality exchanges --such as $Df_!=f_*D, Df^!=f^*D, D \psi [-1] = \psi [-1] D$ (for the last one, use  \ci[Thm.4.6]{illusie}, and $\psi = \psi_\eta \circ j^*$).   
These functors are exact (additive, commute with translations, preserve distinguished triangles).
See \ci[Th. 6.3]{ek}, for a  partial list of the properties concerning
$f_!, f^!, f^*, f_*, \otimes, {\rm Hom}$.  The functors $D$, $\psi$ and $\phi$ are discussed below.

{\bf Duality.} The duality functor $D$ used above is introduced as follows. We call the constructible complex $K_S:= {\oql}_S [2](1)$ the dualizing sheaf of $S$; see \ci[Tm. 6.3.(iii)]{ek}.
The object $K_{X/S}:=f^!K_S$
is a dualizing sheaf on $X$ relative to $S$.  
We denote by $D:={\rm Hom} (-, K_{X/S}): D^b_c(X, \oql) \to D^b_c(X,\oql)$  the
corresponding dualizing contravariant functor.  Note that $K_{s/S} = i^!K_S = K_{s/s}={\oql}_s$, i.e. the dualizing sheaf of $s$ relative to $S$ coincides with the usual dualizing sheaf of $s$. On the other hand, $K_{\eta/S} = j^! K_S =  {\oql}_\eta [2](1)
\neq {\oql}_\eta =K_{\eta/\eta}.$

{\bf  Nearby  and vanishing cycles functors.} For the nearby and vanishing cycles functors
 $\psi$ and $\phi$ see \ci[7.1 I, 7.2 XIII]{sga7}, as well as
\ci{illusie, illusie2}, and references therein. Note that what we denote by $\phi$ here, is denoted by $\phi [-1]$ in loc.cit.; in particular, the usual   distinguished triangle of functors appears here as (\ref{dtf})   $i^* \to \psi \to \phi [1] \rightsquigarrow.$

When using the functors $\psi, \phi$ we assume in addition that the trait $S=S^h$  is Henselian.
Choose a separable closure of the residue field. Form the associate strict Henselianization
$S_{(\ov{s})}$ of $S$ at $s.$  Choose a separable closure
of the fraction field of the strict Henselianization.
After base change, we obtain 
 natural  morphisms of $S$-schemes ($\e$ and $\ov{j}$ are not of finite type)
\beq\la{psph}
\xymatrix{
X_{\ov{s}} \ar[r]^-{\ov{i}} & X_{S_{(\ov{s})}} & X_{\ov{\eta}} \ar[l]_-{\ov{j}} \ar[r]^-\e
& X_{\eta} \ar[r]^-j & X.
}
\eeq
We define the nearby cycles functor by setting  
\beq\la{psipp}
\psi := \ov{i}^* \ov{j}_* \e^* j^* : D^b_c(X, \oql),  \lorw D^b_c (X_{\ov{s}}, \oql).
\eeq
The constructibility assertion here is from  \ci[Th. Finitude, Tm. 3.2]{sga45} and 
\ci[p.45 top]{illusie} (this is what is needed in \ci{ek} to land in the constructible derived category).
We also have the  more classical nearby cycles  functor $\psi_\eta: D^b_c(X_\eta, \oql) \to D^b_c (X_{\ov{s}}, \oql)$, obtained by setting $\psi_\eta:= \ov{i}^* \ov{j}_* \e^*.$ Clearly, 
$\psi = \psi_\eta \circ j^*,$ i.e. $\psi (F)$ depends only on the restriction $j^*F.$
Let $\nu: X_{S_{(\ov{s})}} \to X$ be the natural projection morphism. By adjunction, we have a natural
morphism $\ov{i}^* \nu^* \to \psi$, which we simply denote by $i^* \to \psi$ (this is literally correct if $S$ is strictly Henselian). The cone of this morphism is in fact functorial (cf. \ci[7.2 XIII, (1.4.2.2, 2.1.2.4)]{sga7}, and we denote it by $\phi[1]$.
We have a distinguished triangle of functors 
\beq\la{dtf}
i^* \to \psi \to \phi[1] \rightsquigarrow .
\eeq

For a partial  list of the properties concerning $\psi$ and $\phi$, see \ci[\S2.1]{de-2021}. See also \ci[\S4]{illusie}
(in particular, see Th. 4.7 for the compatibility with cup products).

{\bf The rectified middle perversity $t$-structure.}
We go back to the case where $S$ is a trait (not necessarily Henselian).
For $X/S$  separated and of finite type, 
the   category $D^b_c(X, \oql)$  is endowed with  O. Gabber's  ``rectified"  middle-perversity $t$-structure \ci[\S4, \S2]{illusie, illusie2}
which is defined by setting
\beq\la{ret-}
\xymatrix{
{^p\!D}^{\leq 0} (X,\oql)  = \{ K \, | \; i^* K \in {^p\!D}^{\leq 0} (X_s), \; j^* K \in {^p\!D}^{\leq -1}(X_\eta)\},  \\
{^p\!D}^{\geq 0} (X,\oql)  = \{ K \, | \; i^! K \in {^p\!D}^{\geq 0} (X_s), \; j^* K \in {^p\!D}^{\geq -1}(X_\eta)\}.
}
\eeq
If $X/S$ is obtained from a separated  scheme $Z$ of finite tpye over a smooth curve $C$ over a field, by localizing at a closed point on the curve $C$, then the usual middle perversity $t$-structure
on $Z$ induces the rectified one in (\ref{ret-}).

Note that $i_* {\oql}_s$,  ${\oql}_S [1]=R^0j_* {\oql}_{\eta}[1]$, $j_* {\oql}_\eta [1]$ and $j_! {\oql}_\eta [1]$ are rectified perverse.

{\bf Self-duality.}
By using the definition, and the fact that the middle perversity $t$-structure on a variety over a  field is self-dual
for the usual relative dualizing complexes  over the field, it is easy to verify that 
the rectified perverse $t$-structure is self-dual for the relative dualizing complex $K_{X/S}$, i.e. the duality functor
$D$ exchanges ${^p\!D}^{\leq 0} (X,\oql)$ and ${^p\!D}^{\geq 0} (X, \oql).$

{\bf $t$-exactness.}
The functors $j_*$ and $j_!$  and$(j^*[-1] = j^![-1]$, $\psi[-1]$ and $\phi$ are $t$-exact (\ci[\S4]{illusie}).
If $f:X \to Y$ is an  affine $S$-morphism,  and $S$ is a Henselian, then $f_*$ is right $t$-exact for the rectified perverse $t$-structure (\ci[Th. 2.4, due to O. Gabber]{illusie2}.
Let $f:X \to Y$ be a morphism. Let $d\geq 0$ be an integer such that every geometric fiber of $f$ has dimension at most $d.$
Then we have the following ``inequalities" for the rectified $t$-structure: 
 $f_!: {^p\!D}^{\leq 0} (X) \to {^p\!D}^{\leq d} (Y)$, 
$f^!:  {^p\!D}^{\geq 0} (Y) \to {^p\!D}^{\geq -d} (X) $,
$f^*: {^p\!D}^{\leq 0} (Y) \to {^p\!D}^{\leq d} (X)$,
$f_*: {^p\!D}^{\geq 0} (X) \to {^p\!D}^{\geq -d} (Y)$;
see  \ci[4.2.4]{bbdg} for the case of a field, that can be used to bootstrap the proof over $S$ by using (\ref{ret-}).

We  also have the analogues of \ci[4.1.10-11-12,  Prop. 4.2.5, 4.2.6]{bbdg}, which can be proved in the same way.

{\bf The category of rectified perverse sheaves.}
The category of rectified perverse sheaves on $X$ is Abelian, Noetherian, self-dual, Artinian, and every simple object
is an intermediate extension of a simple object (\ci[\S4.3]{bbdg}) from either the central, or the generic fiber (for this last item, see \ci[p.49, (c)]{illusie}).

\subsection{Compactification and specialization}\la{a cpt sp}$\;$

We refer to \S\ref{intro appdx}.
Let $C/B=J$ be a smooth curve as in \S\ref{rt51}.
The technical issues we need to address  are the following:

\ben
\item[(a)] 
The construction, of  a  suitable natural  completions of:   For  $J=\field$  an algebraically closed field$, M_{Hod}(C/\field)/\mathbb A^1_\field$
and of the associated Hodge-Hitchin morphism $M_{Hod} (C/k) \to A(\FC)\times \mathbb A^1_\field$. 
for $J=S$ a complete strictly Henselian DVR, of $M_{Dol}(C/B)/B$ and of the associated Hitchin morphism $M_{Dol}(C/B) \to A(C/B)).$
This way, diagram \ci[(70) and (72)]{de-2021} and their properties are in place (they are Cartesian up to nilpotents,
and this creates no problems when working with the \'etale topology).

\item[(b)] Taking the quotient by a possibly non-reduced flat finite group subscheme of $\GB$ in the proof of \ci[Lemma 4.1.1]{de-2021}.

\item[(c)] Being able to factor the $\Gm$ quotient into a  quotient as in (b), and a free quotient, as in  \ci[Lemma 4.1.1]{de-2021}.

\item[(d)]
The use of Luna slice Theorem in the proof of \ci[Lm. 4.1.1]{de-2021} in the context of a $\GB$ action  with trivial stabilizers on an affine variety;

\item[(e)] The use of Lemma \ci[Lm.\;4.1.4]{de-2021}, i.e. the assertion that if $p:A\to C$ is the quotient by a finite  flat group scheme $G$ over $\Bbase$ (with some extra assumptions to be listed in Lemma \ref{quot fgr 2}),
then ${\oql}_C$ is a direct summand of $p_*{\oql}_A$.

\item[(f)]  The use of Lemma \ci[Lm.\;4.1.3]{de-2021}, i.e. the identity $a^! \rat_X= \rat_Z[-2]$.
\een

Issue (a) is resolved by taking the compactifications in \S\ref{rt51}.

Issue (b) is resolved by 
the forthcoming standard Lemma \ref{quot fgr}.
We note that  this issue (b) can also be resolved if the positive characteristic $p$ of the ground field is bigger than the rank $r$  of the Higgs bundles we are taking: then the stabilizers we deal with are $i$-roots of unity for $i=1, \ldots, r$, and they are thus  reduced (finite cyclic).

Issue (c) is resolved in the forthcoming Lemma  \ref{quot fgr 3}.

Issue (d) is resolved by \cite[Thm.\;20.4]{al-ha-ry}, where the authors prove a relative version of the Luna Slice Theorem. 
In particular, loc.\;cit.\;implies that for a smooth affine $\Bbase$-scheme $X$ with a free action of a smooth affine reductive group scheme $G$ over $\Bbase$, if the GIT quotient $X/G$ exists, then etale locally over $\Bbase$, $X$ is etale locally isomorphic to the product of $G$ and a $\Bbase$-scheme $W,$ which, morally,  is  a slice 
 transversal to the $G$-orbit.

Issue (e) is resolved by the forthcoming Lemma \ref{quot fgr 2}.

Issue (f) is resolved by O. Gabber Purity result \ci[Tm. 2.2]{illusie2}.

\begin{lm}\la{quot fgr}
Let $X$ be a quasi-projective scheme over a noetherian base scheme $\Bbase$.
Let $G$ be a finite flat group scheme over $\Bbase$ that acts on $X$.
Then a uniform geometric quotient $q: X\ra X/G$ exists, the morphism $q$ is finite, and the quotient $X/G$ is quasi-projective over $\Bbase$.
\end{lm}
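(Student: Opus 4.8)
The plan is to construct $X/G$ by gluing affine quotients, following the classical construction of quotients by finite flat group schemes \cite{sga3}, and then to deduce quasi-projectivity by descending an ample line bundle along the finite quotient morphism. Since the formation of a uniform geometric quotient is local on the base and commutes with flat base change, I would first reduce to the case where $B=\Spec(R)$ is affine and noetherian. In this situation, whenever $U=\Spec(A)\subseteq X$ is a $G$-invariant affine open, with $G=\Spec(H)$ and coaction $\rho\colon A\to A\otimes_R H$ encoding the action morphism $a\colon G\times_B X\to X$, one sets $A^G:=\{b\in A\mid \rho(b)=b\otimes 1\}$ and declares the local quotient to be $\Spec(A^G)$. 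Two things must then be checked locally: that $A$ is finite over $A^G$, and that $\Spec(A)\to\Spec(A^G)$ is a uniform geometric quotient.

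The crux of the argument, and the step I expect to be the main obstacle, is to show that $X$ is covered by $G$-invariant affine opens. Given a point $x\in X$, the scheme-theoretic image $O_x$ of $a\colon G\times_B\{x\}\to X$ is finite over $\kappa(x)$, hence a finite set of points; since $X$ is quasi-projective over the affine $\Spec(R)$, such a finite set lies in a single affine open $U=\Spec(A)$. Writing $Z:=X\setminus U$ and $p\colon G\times_B X\to X$ for the projection, the finiteness (hence properness) of $p$ makes $W:=U\setminus p(a^{-1}(Z))$ an open, $G$-invariant neighborhood of $x$ contained in $U$. To upgrade $W$ to a $G$-invariant affine, I would choose $f\in A$ vanishing on $U\setminus W$ and nonvanishing on all of $O_x$ (possible by prime avoidance), so that $U_f=D(f)\subseteq W$ is affine and still contains $O_x$; then, via the norm map $\mathrm{Nm}_p\colon\Gamma(G\times_B W,\mathcal O)\to\Gamma(W,\mathcal O)$ attached to the finite locally free morphism $p\colon G\times_B W\to W$, I would form the function $N:=\mathrm{Nm}_p(a^*f)$. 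The associativity of the action together with the multiplicativity of the norm make $N$ a $G$-invariant function; its nonvanishing locus $W_N=\{y\mid G\cdot y\subseteq U_f\}$ is $G$-invariant and contains $x$, and, being a basic open of the affine $U_f$, it is affine. Checking this invariance of $N$ is the one genuine computation of the proof.

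With a cover by $G$-invariant affines in hand, the local assertions follow: finiteness of $A$ over $A^G$ comes from the characteristic-polynomial argument (each element $b\in A$ is a root of a monic polynomial over $A^G$ whose coefficients are norms of $b$), integrality being promoted to finiteness by noetherianity; the identity $(A\otimes_R R')^G=A^G\otimes_R R'$ for flat $R'$ (invariants being a kernel, they commute with flat base change) yields uniformity, while the identification of the geometric fibers of $\Spec(A)\to\Spec(A^G)$ with the $G$-orbits yields the geometric-quotient property. These canonical local quotients glue, since invariants are intrinsic and $G$-invariant opens meet in $G$-invariant opens, producing a finite morphism $q\colon X\to X/G$ that is a uniform geometric quotient. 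Finally, to obtain quasi-projectivity over $B$, I would start from a $B$-relatively ample line bundle $L$ on $X$; after replacing $L$ by a suitable power it acquires a $G$-linearization, which descends along the affine geometric quotient $q$ to a line bundle $M$ on $X/G$ with $q^*M\cong L^{\otimes n}$ (cf.\ \cite{git}). Since $q$ is finite and surjective and $q^*M$ is $B$-ample, $M$ is $B$-ample, whence $X/G$ is quasi-projective over $B$.
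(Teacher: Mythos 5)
Your treatment of the first three assertions (existence of the uniform geometric quotient, finiteness of $q$) is a correct, if compressed, rendition of the classical construction: cover $X$ by $G$-invariant affine opens using the norm of a function along the finite locally free projection $p\colon G\times_B X\to X$, take invariant subrings, get integrality from the characteristic-polynomial argument and uniformity from the fact that $A^G$ is a kernel. This is exactly what the sources the paper cites for this part do (\cite[Thm.~4.16]{mo-va}, \cite[Thm.~12.1]{ma-mu-ra}); the paper itself offers no argument beyond those citations, so here your proposal is simply a more self-contained account of the same route. The steps you gloss over (separation of distinct orbits by invariants, and the gluing, which requires knowing that invariant opens have open image because $q$ is closed and fibers are orbits) are standard but do carry real content.

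The genuine gap is in the quasi-projectivity of $X/G$, which is precisely the assertion the paper outsources to \cite[Prop.~4.5(B$'$)]{rydh2}. Your claim that some power of a relatively ample $L$ acquires a $G$-linearization is false: already for $\mathbb{Z}/2$ swapping the factors of $\mathbb{P}^1\times\mathbb{P}^1$ and $L=\mathcal{O}(1,2)$, no power of $L$ is even isomorphic to its pullback under the involution, so none is linearizable. The correct linearized ample bundle is the norm $\mathrm{Nm}_p(a^*L)$ (equal to $\bigotimes_g g^*L$ for constant groups), which is canonically linearized and ample but is not a power of $L$. More seriously, even with a linearized ample bundle whose stabilizer actions have been trivialized by a further power, the descent along $q$ that you invoke is not available in this generality: $q$ is finite but not flat, so there is no elementary norm along $q$, and Kempf-type descent (\cite[Thm.~10.3]{alper}, which the paper does use elsewhere) requires the stabilizers to be linearly reductive — false for a general finite flat $G$ such as $\alpha_p$ or $\mathbb{Z}/p$ in characteristic $p$. (Your closing inference is fine: ampleness does descend along finite surjective morphisms once one has $M$ with $q^*M$ ample; the problem is producing $M$.) A route that closes the gap without descending line bundles is to use $\mathrm{Nm}_p(a^*L)$ to embed the closure $\overline{X}$ $G$-equivariantly into some $\mathbb{P}(V)$ with $G$ acting linearly, observe that $\overline{X}/G$ is finite over $\mathrm{Proj}\bigl(\mathrm{Sym}(V^\vee)^G\bigr)$ (finitely generated by Artin--Tate), hence projective over $B$, and that $X/G$ sits inside it as an open subscheme; alternatively one cites Rydh's divided-power/norm construction for non-flat finite morphisms, as the paper does.
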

\begin{proof}

For the statements without the quasi-projectivity of $X/G$,
a proof is contained in \cite[Thm.\;4.16]{mo-va}. 
When $\Bbase$ is the spectrum of a field, a proof can also be found in \cite[Thm.\;12.1]{ma-mu-ra}. See also \cite[Rmk.\;4.2]{rydh2}.

The quasi-projectivity of $X/G$ over $B$ is proved by \cite[Prop.\;4.5.(${\rm B}'$)]{rydh2}
\end{proof}

Lemma \ref{quot fgr} implies the following Lemma \ref{quot fgr 3}, which is needed in the proof of Lemma \ref{quot fgr 2}.

\begin{lemma}
\label{quot fgr 3}
Let $X/\Bbase$ be as in Lemma \ref{quot fgr}.
Let $H$ be a group scheme over $\Bbase$.
Suppose that $X/\Bbase$ admits an $H$-action, so that the uniform geometric quotient $q:X\ra X/H$ exists.
Let $G$ be a finite flat closed group subscheme of $H$ so that the quotient group scheme $H/G$ exists and is reductive.
Then we have a factorizaiton
\[q: X\xra{q_1} X/G \xra{q_2} X/H,\]
where both $q_1$ and $q_2$ are uniform geometric quotients.
\end{lemma}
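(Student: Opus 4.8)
The plan is to build the quotient by $G$ first, descend the $H$-action to it, and then recognize the given quotient $X/H$ as the residual quotient by $H/G$. First I would invoke Lemma \ref{quot fgr}: since $G$ is a finite flat group scheme over the noetherian base $B$ and $X$ is quasi-projective over $B$, there is a finite uniform geometric quotient $q_1\colon X\to X/G$ with $X/G$ quasi-projective over $B$. I record two structural facts for later use: $G$ is normal in $H$ (this is implicit in the hypothesis that the quotient group scheme $H/G$ exists), and $H\to B$ is flat, being an extension of the reductive --- hence smooth and flat --- group scheme $H/G$ by the finite flat $G$ (so that $H\to H/G$ is a faithfully flat $G$-torsor).

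Next I would produce the $H/G$-action on $X/G$ by descent. Because $q_1$ is a uniform geometric quotient, its formation commutes with the flat base change $H\to B$, so $\mathrm{id}_H\times q_1\colon H\times_B X\to H\times_B (X/G)$ is the geometric quotient by $G$ acting on the second factor. Normality of $G$ gives, for the $H$-action $\mu$, the identity $h(gx)=(hgh^{-1})(hx)\in G\cdot(hx)$, so $q_1\circ\mu$ is $G$-invariant and factors uniquely through a morphism $\bar\mu_0\colon H\times_B(X/G)\to X/G$. That $\bar\mu_0$ is an $H$-action making $q_1$ equivariant is checked by pulling the associativity and unit diagrams back along the faithfully flat $\mathrm{id}\times q_1$ and reducing to the known identities for $\mu$ on $X$, exactly as in the descent arguments of Lemmata \ref{group on blow up} and \ref{group on normalization}. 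Since $q_1$ collapses each $G$-orbit, the subgroup $G\subseteq H$ acts trivially under $\bar\mu_0$; using that $H\times_B(X/G)\to (H/G)\times_B(X/G)$ is a $G$-quotient (faithfully flat descent along $H\to H/G$), the action $\bar\mu_0$ descends to an action $\bar\mu\colon (H/G)\times_B(X/G)\to X/G$.

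It then remains to define $q_2$ and identify it as the quotient. As $q$ is $H$-invariant, hence $G$-invariant, and $q_1$ is the categorical $G$-quotient, $q$ factors uniquely as $q=q_2\circ q_1$ for a unique $q_2\colon X/G\to X/H$; surjectivity of $q_2$ is immediate from that of $q$. The core task is to verify that $q_2$ is a uniform geometric quotient for $\bar\mu$, which is a quotient-in-stages statement. I would check the three conditions of \cite[Def.\;0.6]{git} by reducing along the faithfully flat $q_1$ to the corresponding properties of $q$ and $q_1$: submersiveness of $q_2$ follows from submersiveness of $q$ together with surjectivity and submersiveness of $q_1$; the invariant-functions condition $\mathcal{O}_{X/H}\xrightarrow{\sim}(q_{2*}\mathcal{O}_{X/G})^{H/G}$ follows from $(-)^H=((-)^G)^{H/G}$ applied to $q_*\mathcal{O}_X$, using $\mathcal{O}_{X/G}=(q_{1*}\mathcal{O}_X)^G$; and the orbit-equals-fiber condition, that the image of $(\bar\mu,\mathrm{pr})\colon(H/G)\times_B(X/G)\to(X/G)\times_{X/H}(X/G)$ is all of the fiber product, is obtained by pushing forward the analogous surjection for $q$ along the faithfully flat $q_1\times q_1$. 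Uniformity is inherited because each input quotient is uniform and all the reductions above are stable under flat base change.

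The main obstacle is precisely this last step: upgrading the transparent set-theoretic picture --- on geometric points the fibers of $q_2$ are exactly the $H/G$-orbits, because the $G$-orbits inside a single $H$-orbit are permuted transitively by $H/G$ --- to the scheme-theoretic, base-change-stable orbit-equals-fiber and invariant-sheaf conditions. This is where normality of $G$ and the flatness of both $H\to B$ and $H\to H/G$ are all genuinely used, via faithfully flat descent along $q_1$ and $q_1\times q_1$. I note that reductivity of $H/G$ is not logically required for the factorization itself; it is the natural hypothesis in the intended application, where $H=\GB$, $G$ is a finite flat subgroup scheme, and $H/G\cong\GB$.
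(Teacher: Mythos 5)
Your proposal is correct in outline, but it takes a genuinely different route from the paper at the decisive step. The paper also begins by putting an $H/G$-action on $X/G$ (via comorphisms rather than your flat-base-change/descent argument, but to the same effect) and then invokes Lemma \ref{quot fgr} for $q_1$. Where you diverge is in producing $q_2$: the paper builds the quotient $(X/G)/(H/G)$ \emph{from scratch}, using Seshadri's geometric reductivity over an arbitrary base (\cite[Thm.\;3, Rmk.\;8,\;10]{seshadri}, exactly as in the proof of Theorem \ref{ugm proper}.(2)) to get a uniform categorical quotient of the quasi-projective $X/G$ by the reductive $H/G$, upgrades it to a geometric quotient by checking closedness of $H/G$-orbits via \cite[Lm.\;3.3.1.(1)]{bdhk}, and only then identifies the result with the given $X/H$ by uniqueness of quotients. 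You instead define $q_2$ directly from the universal property of the categorical quotient $q_1$ and verify the axioms of \cite[Def.\;0.6]{git} for $q_2$ by faithfully flat descent along $q_1$ and $q_1\times q_1$. Your route is more self-contained and, as you correctly note, does not use reductivity of $H/G$ --- the paper's proof genuinely does, since Seshadri's theorem is its engine --- but it shifts the burden onto the quotient-in-stages verifications, in particular the sheaf-level identity $(q_*\mathcal{O}_X)^H=((q_*\mathcal{O}_X)^G)^{H/G}$ for possibly non-smooth $G$, which holds by faithful flatness of $H\to H/G$ but should be justified with the same care you give the rest; the paper's route avoids this at the cost of the extra hypothesis and the external references. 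Both arguments are sound; yours proves a marginally stronger statement.
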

\begin{proof}
We first show that $H/G$ acts on $X/G$: 
We have the short exact sequence 
\[0\ra \mathcal{O}_{X}\otimes \mathcal{O}_{H/G}\ra \mathcal{O}_X\otimes O_H \ra \mathcal{O}_X\otimes \mathcal{O}_G\ra 0. \]
Since the composition $\mathcal{O}_{X/G}\hra \mathcal{O}_X\ra \mathcal{O}_X\otimes \mathcal{O}_H\twoheadrightarrow \mathcal{O}_X\otimes \mathcal{O}_G$, where the middle morphism is the action comorphism, is trivial,
we have a natural morphism $\mathcal{O}_{X/G}\ra \mathcal{O}_X\otimes \mathcal{O}_{H/G}$, which factors through
$\mathcal{O}_{X/G}\ra \mathcal{O}_{X/G}\otimes \mathcal{O}_{H/G}$. One can check that this defines the comorphism of an $H/G$ action on $X/G$.
 
By Lemma \ref{quot fgr}, we have that the uniform geometric quotient $q_1:X\ra X/G$ exists.
Using \cite[Thm.\;3,\;Rmk.\;8,10]{seshadri} as in the proof of Theorem \ref{ugm proper}.(2) in \S\ref{quotient ex}, we have that $q_2:X/G\ra (X/G)/(H/G)$ exists as a uniform categorical quotient, and that $q_2$ is a uniform geometric quotient if for any geometric point $a$ of $X/G$, the set-theoretic image $\mu_{a}(H/G)$ of the orbit morphism $\mu_a$ is closed. The closedness of $\mu_a(H/G)$ is shown by the proof of \cite[Lm.\;3.3.1.(1)]{bdhk}, thus $q_2$ is a uniform geometric quotient.
The factorization $q=q_1\circ q_2$ follows from the uniqueness of a geometric quotient.
\end{proof}

Recall that for any base scheme $B$ and any abstract group $G$, there exists a unique constant
group scheme over $B$ associated with $G$, see \cite[\href{https://stacks.math.columbia.edu/tag/03YW}{03YW}]{stacks}.

\begin{lm}\label{quot fgr 2}
Assume the setup as in Lemma \ref{quot fgr}.
Assume either one of the following additional assumptions on $\Bbase$ or $G$:
\ben
\item[(i)]
$G$ is the constant group scheme associated to an abstract finite group;
\item[(ii)]
$\Bbase=k$ is an algebraically closed field;
\item[(iii)]
$\Bbase$ is of equal characteristics and $G$ is the group scheme $\mu_N$ of $N$-th roots of unity for some $N\in \Z_{>0}$.
\een
Then $(\oql)_{X/G}$ is a direct summand of $q_*(\oql)_X$.
\end{lm}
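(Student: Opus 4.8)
The plan is to exhibit the adjunction unit $u\colon \oql_{X/G}\to q_*\oql_X$ as a split monomorphism by producing a canonical retraction. Since the coefficient field $\oql$ has characteristic zero, every nonzero integer, in particular the order of $G$, is invertible, so the only genuine task is to construct a suitable averaging (trace) operator in each of the three cases; once a retraction is in place, $\oql_{X/G}$ is a direct summand of $q_*\oql_X$.

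First I would treat case (i), where $G=\underline{\Gamma}$ is the constant group scheme attached to a finite group $\Gamma$ of order $n$. Each $\gamma\in\Gamma$ is an automorphism $\gamma_X\colon X\to X$ over $X/G$, hence induces an endomorphism $\gamma_X^{*}$ of $q_*\oql_X$, and $e:=\tfrac1n\sum_{\gamma\in\Gamma}\gamma_X^{*}$ is an idempotent. I would then check, on geometric points via finite base change and using that a geometric fibre of the geometric quotient $q$ is a single $\Gamma$-orbit, that the image of $e$ is canonically identified with $\oql_{X/G}$ through $u$; this splits $u$ and settles case (i).

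Next I would reduce cases (ii) and (iii) to case (i) by splitting off the infinitesimal part of $G$. For case (ii), over the algebraically closed field $k$ the connected--étale sequence $1\to G^{0}\to G\to G/G^{0}\to 1$ has $G^{0}$ infinitesimal and normal and $G/G^{0}$ a constant finite group scheme. Applying Lemma \ref{quot fgr 3} with the subgroup $G^{0}\subseteq G$ and the (étale, hence reductive) quotient $G/G^{0}$ factors $q$ as $X\xrightarrow{q_1} X/G^{0}\xrightarrow{q_2} X/G$. The morphism $q_1$ is finite and purely inseparable, hence a universal homeomorphism, so by the topological invariance of the étale site the unit $\oql_{X/G^{0}}\to (q_1)_*\oql_X$ is an isomorphism; and $q_2$ is a quotient by the constant group $G/G^{0}$, so case (i) makes $\oql_{X/G}$ a direct summand of $(q_2)_*\oql_{X/G^{0}}\cong q_*\oql_X$. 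For case (iii), over an equal-characteristic base I would write $N=p^{a}m$ with $p\nmid m$ (taking $a=0$ in characteristic zero) and use the decomposition $\mu_N\cong \mu_{p^{a}}\times_{\Bbase}\mu_m$; the same factorization through $X/\mu_{p^{a}}$, which is a universal homeomorphism since $\mu_{p^{a}}$ is infinitesimal over an $\mathbb{F}_p$-algebra, reduces the problem to the quotient by the étale group scheme $\mu_m$, whose order $m$ is invertible in $\oql$.

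It remains to handle the quotient by a finite étale group scheme of invertible order that need not be constant, which is where I expect the main obstacle to lie, since $q$ need not be flat and $G$ need not be a literal group, so the averaging cannot be written as a naive sum. I would circumvent this in one of two ways: either invoke that $\mu_m$, being diagonalizable and thus of multiplicative type, is linearly reductive, so that the invariants functor is exact and the canonical projector splits $u$; or, more elementarily, pass to the finite étale cover $\Bbase'\to \Bbase$ adjoining the $m$-th roots of unity, over which $\mu_m$ becomes the constant group $\Z/m$ and case (i) applies, and then observe that the resulting splitting is precisely the canonical trace-divided-by-$m$ map, which is insensitive to the cover and therefore descends along $\Bbase'\to \Bbase$ by étale descent. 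The essential mechanisms are thus topological invariance of the étale site for the infinitesimal part and the canonical trace for the étale part; these are exactly what replace the literal averaging of case (i) when $G$ fails to be a constant group.
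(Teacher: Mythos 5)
Your proof is correct, and for cases (i) and (ii) it coincides with the paper's argument: the same $\frac{1}{|G|}\sum_{\gamma}\gamma^{*}$ trace splitting for a constant group, and the same connected--\'etale d\'evissage $1\to G^{0}\to G\to G/G^{0}\to 1$ combined with Lemma \ref{quot fgr 3}, the universal-homeomorphism property of $X\to X/G^{0}$, and topological invariance of the \'etale site. Where you genuinely diverge is case (iii). The paper invokes \cite[Lm.\;19.7]{al-ha-ry} to produce a fiberwise-identity-component subgroup $G_0\subseteq\mu_N$ and then asserts that the \'etale quotient $\mu_N/G_0$ is a \emph{constant} group scheme, so that case (i) applies directly; you instead use the canonical splitting $\mu_N\cong\mu_{p^{a}}\times_{\Bbase}\mu_{m}$ with $N=p^{a}m$, $p\nmid m$, kill the infinitesimal factor by the universal-homeomorphism argument, and then handle the possibly non-constant \'etale group $\mu_m$ by passing to a finite \'etale cover $\Bbase'\to\Bbase$ trivializing it and descending the (Galois-equivariant, hence descendable) averaging projector. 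Your route is more elementary (no appeal to \cite{al-ha-ry}) and, importantly, more robust: over a general equal-characteristic base the \'etale quotient $\mu_N/G_0$ need \emph{not} be constant (e.g.\ $\mu_3$ over $\Q$), so the descent step you supply is exactly what is needed to make case (iii) work in the stated generality. One caution: of your two proposed mechanisms for the \'etale part, only the descent one is airtight as written; the appeal to linear reductivity of $\mu_m$ produces a Reynolds projector on quasi-coherent sheaves, and transporting that splitting to $\oql$-adic \'etale sheaves along a possibly non-flat $q$ would require a separate argument, so you should rely on the covering-and-descent version.
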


\begin{proof} 
Let us start with the case with assumption (i).

Let $q:X/\ra X/G$ be the uniform geometric quotient as in Lemma \ref{quot fgr}.
We have that $(q_{*}(\oql)_X)^{G}\cong (\oql)_{X/G}$.
Since $char(\oql)=0$, there exists the trace morphism 
\[(s\mapsto \frac{1}{|G|}\sum_{g\in G} g^* s): q_*(\oql)_X \ra (q_{*}(\oql)_X)^G, \]
which defines a splitting of the inclusion $(\oql)_{X/G}\hra q_*(\oql)_X$.
The case with assumption (i) is then proved.

We now consider the case with assumption (ii): 
Let 
\[1\ra G_0\ra G\ra \pi_0(G)\ra 1\]
be the ``connected-\'etale short exact sequence" as in \cite[p.114]{milne}, i.e. $G_0$ is the unique connected normal subgroup scheme of $G$ so that $\pi_0(G):=G/G_0$ is etale over $\field$. 
Recall that  $G$ is reduced if and only if  $G_0$ is trivial. 

By Lemmata \ref{quot fgr} and \ref{quot fgr 3}, the uniform geometric quotients $q_1: X\ra X/G_0$, $q_2: (X/G_0)/\pi_0(G)$ and $q:X\ra X/G$ exist, and we have a factorization 
\[q:X\xra{q_1} X/G_0\xra{q_2} (X/G_0)/\pi_0(G).\]

Since $G_0$ is connected and $k$ is algebraically closed, by \cite[\href{https://stacks.math.columbia.edu/tag/054N}{054N}]{stacks} we have that for any field extension $K\supset k$, $(G_0)_K$ is also connected. 
Since $|(G_0)_K|$ is discrete, we have that $|(G_0)_K|$ is a singleton.
Since $(q_1)_K:X_K\ra (X/G_0)_K=X_K/(G_0)_K$ is also a geometric quotient, we have that $(q_1)_K$ is injective.
By \cite[\href{https://stacks.math.columbia.edu/tag/01S4}{0154}]{stacks}, we have that $q_1$ is universally injective, and thus purely inseparable.
Since $q_1$ is also finite and surjective by Lemma \ref{quot fgr}, we have that $q_1$ is a universal homeomorphism \cite[\href{https://stacks.math.columbia.edu/tag/04DF}{04DF}]{stacks}.
Therefore we have that $q_{1,*}(\oql)_X=(\oql)_{X/G_0}$, see \cite[Rmk.\;2.3.17]{etcoh}.

Therefore, to show that $(\oql)_{X/G_0}$ is a direct summand of $q_*(\oql)_X$, it suffices to show that 
$(\oql)_{X/G_0}$ is a direct summand of $q_{2,*}(\oql)_{X/G_0}$.
Note that $\pi_0(G)$ is associated with an abstract finite group, see e.g. \cite[\S 8.21]{jantzen}. 
We are then reduced to the case with assumption (i). The case with assumption (ii) is thus finished.

For the case with assumption (iii), \cite[Lm.\;19.7]{al-ha-ry} shows that there exists 
a subgroup scheme $G_0$ of $\mu_N$ so that fiber by fiber over $\Bbase$, $G_0$ restricts to the identity component of $\mu_N$, and that the quotient group scheme $G_0/\mu_N$ is etale over $\Bbase$.
Therefore the short exact sequence 
\[1\ra G_0\ra \mu_N\ra \mu_N/G_0\ra 1,\]
fiber by fiber over $\Bbase$, restricts to the \'{e}tale-connected sequence in the proof in case (ii) above.
Moreover, using the fact that $\mu_N$ is cyclic and $\Bbase$ is of equal characteristic, we see that 
$\mu_N/G_0$ is isomorphic to the group scheme associated with the abstract finite group $(\mu_N)_b(\kappa(b))$ 
for some closed point $b$ in $B$ with residue field $\kappa(b)$.
Therefore, similar argument as in case (ii) finishes the proof in case (iii).
\end{proof}

\end{document}